\newtheorem{assumption}{Assumption}[section]
\def\real{\mathbb{R}}
\def\eps{\epsilon}
\DeclareMathOperator{\ared}{ared}
\DeclareMathOperator{\pred}{pred}
\DeclareMathOperator{\rared}{rared}
\newcommand{\revised}[1]{\textcolor{black}{#1}}
\title{A Nonmonotone Matrix-Free Algorithm for Nonlinear Equality-Constrained 
 Least-Squares Problems
\thanks{Version of \today.}}
\author{E. Bergou\thanks{Mohammed VI Polytechnic University, Ben Guerir, Morocco ({\tt elhoucine.bergou@um6p.ma}). 
}
\and Y. Diouane \thanks{ISAE-SUPAERO, Universit\'e de Toulouse, 31055 Toulouse Cedex 4, France
  (\texttt{youssef.diouane@isae-supaero.fr}).}
\and V. Kungurtsev \thanks{Department of Computer Science, Faculty of Electrical Engineering, Czech Technical University in Prague. Support for this author was provided by the OP VVV project
CZ.02.1.01/0.0/0.0/16\_019/0000765 Research Center for Informatics., 
  (\texttt{vyacheslav.kungurtsev@fel.cvut.cz}).}
  \and C. W. Royer \thanks{LAMSADE, CNRS, Universit\'e Paris-Dauphine, Universit\'e PSL, 75016 Paris, France. Support for this author was partially provided by Subcontract 3F-30222 from Argonne National Laboratory, 
  (\texttt{clement.royer@dauphine.psl.eu}).}}
\begin{document}

\maketitle

\begin{abstract}
Least squares form one of the most prominent classes of optimization problems, 
with numerous applications in scientific computing and data fitting. When 
such formulations aim at modeling complex systems, the optimization process 
must account for nonlinear dynamics by incorporating constraints. In addition, 
these systems often incorporate a large number of variables, 
which increases the difficulty of the problem, and motivates the need for 
efficient algorithms amenable to large-scale implementations.

In this paper, we propose and analyze a Levenberg-Marquardt algorithm for 
nonlinear least squares subject to nonlinear equality constraints. Our 
algorithm is based on inexact solves of linear least-squares problems, that
only require Jacobian-vector products. Global convergence is 
guaranteed by the combination of a composite step approach and a 
nonmonotone step acceptance rule. We illustrate the performance of our 
method on several test cases from data assimilation and 
inverse problems: our algorithm is able to reach the vicinity of a 
solution from an arbitrary starting point, and can outperform the most 
natural alternatives for these classes of problems.
\end{abstract}

\begin{keywords}
Nonlinear least squares; equality constraints; Levenberg-Marquardt method; 
iterative linear algebra; PDE-constrained optimization.
\end{keywords}
\begin{AMS}
65K05, 90C06, 90C30, 90C55.
\end{AMS}

\section{Introduction}
\label{sec:intro}

\revised{In this paper we are interested in solving least-squares optimization problems
wherein a set of unknown parameters
is sought such that it minimizes the Euclidean norm of a residual vector function.}
This objective is particularly well suited to represent the discrepancy between a 
model and a set of observations. As a result, such formulations have been 
successfully applied to a wide range of applications across 
disciplines~\cite{PCHansen_VPereyra_GScherer_2012}. This is not only due to 
the ubiquitous nature of this problem, but also to the existence of 
efficient optimization algorithms dedicated to solving this problem by 
exploiting its specific structure. In particular, the unconstrained setting 
is particularly well understood: linear least squares can be efficiently
solved by exploiting linear algebra solvers~\cite{SBoyd_LVandenberghe_2018} 
while nonlinear least squares are classically tackled using variants of the 
Gauss-Newton paradigm~\cite{JNocedal_SJWright_2006}. 

In this paper, we consider nonlinear least-squares problems subject to 
nonlinear constraints of the following form:
\begin{equation} \label{eq:prb}
\begin{array}{rl}
\displaystyle  
\min_{x \in \real^d} & f(x) \triangleq \frac{1}{2} \|F(x)\|^2 
= \frac{1}{2}  \sum_{i=1}^m F_i(x)^2, \\
\mbox{s. t.}& C(x) = 0,
\end{array}
\end{equation}
where $\| \cdot \|$ will denote the Euclidean norm,  $F: \real^d \rightarrow \real^m$  and $C: \real^d \rightarrow \real^p$ 
will be assumed to be nonlinear, potentially nonconvex, continuously 
differentiable functions. Although one possible approach to solving this 
problem consists in incorporating the constraints directly into the objective, 
we are mainly interested in situations in which the constraints represent 
physical phenomena that drive the behavior of the underlying system, and should 
not be treated as additional residual functions. Such formulations arise when 
solving inverse problems~\cite{ATarantola_2005} in variational modeling for 
meteorology, such as 4DVAR~\cite{YTremolet_2007}, the dominant data assimilation 
least-squares formulation used in numerical weather prediction centers. 
Similar applications include seismic imaging and fluid 
mechanics~\cite{HAntil_DPKouri_MDLacasse_DRidzal_2016EDS}.

In this work, we aim at combining the Levenberg-Marquardt method~\cite{KLevenberg_1944,
DMarquardt_1963}, one of the most popular algorithms for the solution of nonlinear least squares, 
with nonlinear programming techniques tailored to solving large-scale equality-constrained problems. 
Our framework is inspired by inexact trust-region sequential quadratic programming 
(SQP)~\cite{JrJEDennis_MElAlem_MCMaciel_1997,heinkenschloss2014matrixfreeSQP}, which uses
inexact computation of a composite step in a careful way that does not jeopardize the convergence guarantees. We 
combine these techniques with a nonmonotone rule for accepting or rejecting the step, that eschews 
the use of a penalty function while still guaranteeing global convergence~\cite{MUlbrich_SUlbrich_2003}. A
nonmonotone rule allows greater flexibility in step acceptance, and thus in practice often converges
faster since it accepts a wider range of steps. Our 
approach builds on previously proposed algorithms of trust-region type, that have a broader 
scope than least squares. In our case, we do not use second-order derivatives (note that 
those could be unavailable for algorithmic use), \revised{but rather} take advantage of the second-order 
nature of the Gauss-Newton model. We also allow for a ``matrix-free'' implementation of 
our algorithm, that only requires access to products of the Jacobian matrices (of both the 
constraints and objective) with vectors.
 
The existing literature on Levenberg-Marquardt methods for constrained 
least squares has mainly focused on establishing local convergence~\cite{RBehling_AFischer_2012,izmailov2019globally} 
or complexity guarantees~\cite{NMarumo_TOkuno_ATakeda_2020} for the proposed 
method. In particular, the work of Izmailov et al.~\cite{izmailov2019globally} 
is concerned with local convergence properties of Tikhonov-type regularization 
algorithms, which requires the use of second derivatives. 
Meanwhile, other regularization techniques for constrained least squares that 
do not directly belong to the Levenberg-Marquardt class of methods have recently 
been proposed, that are also based on the SQP methodology 
(see~\cite{ZFLi_MROsborne_TPrvan_2002,DOrban_ASSiquiera_2020} and references 
therein). However, to the best of our knowledge, these approaches are not based 
on nonmonotone rules, and generally require second-order derivatives. 

The rest of this paper is organized as follows. In section~\ref{sec:algo}, we 
detail the various components of our proposed algorithm, with a focus on  
inexactness conditions and nonmonotone rules. 
Section~\ref{sec:globcv} addresses the global convergence of our algorithm. The 
practical behavior of our method is investigated in Section~\ref{sec:numerics} 
on problems including nonlinear data assimilation and inverse PDE-constrained 
optimization. Conclusions and perspectives are finally provided in 
Section~\ref{sec:conc}.  

\paragraph{Notations}
Throughout $\| \cdot \|$ will denote the vector or matrix $l_2$-norm 
in any space of the form $\real^d$, and 
$\langle ., .\rangle$ the associated dot product. We will use $A^\top$ to 
for the transpose of the matrix $A$. The identity matrix in 
$\real^{d \times d}$ will be denoted by $I_d$.

\section{Algorithmic framework}
\label{sec:algo}

In this section, we provide a detailed description of the algorithm studied in 
this paper. Our method combines the Levenberg-Marquardt algorithm with the 
classical Byrd-Omojokun SQP step~\cite[Chapter 18]{JNocedal_SJWright_2006} 
for equality constraints. At every iteration $j$, the method 
computes a trial step of the form $s_j \triangleq n_j+t_j$, with $n_j$ 
being an inexact quasi-normal step (see Section~\ref{subsec:algo:normal}) 
and $t_j$ being an inexact tangential step (see 
Section~\ref{subsec:algo:tangential}). The former aims at improving 
feasibility, while the latter focuses on lowering the objective value while 
retaining feasibility. This step is then accepted or rejected depending on 
nonmonotone decrease requirements: those conditions are described 
in Section~\ref{subsec:algo:nonmonotone}. The full description of the method 
is given in Section~\ref{subsec:algo:fullalgo}.

\subsection{Inexact quasi-normal step}
\label{subsec:algo:normal}

Let $\phi(x)\triangleq \tfrac{1}{2}\|C(x)\|^2$ denote the constraint 
violation function at the point $x$. 
The quasi-normal step is defined as the solution of the following 
minimization subproblem:
\begin{eqnarray}\label{normalstep}
\displaystyle \min_{n \in \real^d} m_j^c(n) 
& \triangleq 
& \frac{1}{2}\| C_j+J^{c}_j n\|^2 + \frac{1}{2} \gamma_j \|n\|^2, 
\end{eqnarray}
where $C_j \triangleq C(x_j)$ and $J^{c}_j \triangleq J^c(x_j)$ denotes the 
Jacobian of the constraints $C$ at $x_j$. The first term in~\eqref{normalstep} 
is the Gauss-Newton model of the constraint violation function, while the 
second term represents a regularization of Levenberg-Marquart type: the 
regularization parameter $\gamma_j$ will be chosen adaptively during the 
algorithmic process.

For practical purposes (and in particular in a large-scale setting), we 
consider an approximate solution of the subproblem~\eqref{normalstep}, 
typically computed using Krylov subspace methods~\cite{EBergou_YDiouane_VKungurtsev_2017,EBergou_SGratton_LNVicente_2016}. 
More precisely, instead of solving
~\eqref{normalstep} to global optimality, we only require that 
our inexact quasi-normal step satisfies:
\begin{equation}\label{eq:decrease:c} 
m_j^c(0) - m_j^c(n_j) \ge \kappa_1\frac{\|C_j\|^2}{\|J^{c}_j\|^2+\gamma_j}.
\end{equation}
for some constant $\kappa_1>0$. Note that the Cauchy point, i.e. the 
minimizer of $m_j^c(n)$ in the subspace spanned by $\nabla m_j^c(0)$, 
satisfies the condition~\eqref{eq:decrease:c} under appropriate 
assumptions~\cite{EBergou_YDiouane_VKungurtsev_2017}. 

\subsection{Inexact tangential step}
\label{subsec:algo:tangential}

Having computed the (inexact) quasi-normal step, we now seek an additional 
step $t_j$ that results in a sufficient decrease of the objective function
in the tangent space along the level sets of the
constraints. We thus consider the Lagrangian function associated with 
problem~\eqref{eq:prb}:
\begin{eqnarray*}
	\mathcal{L}(x,y) &\triangleq &f(x) + y^{\top}C(x),~~~~~~~y \in \real^p.
\end{eqnarray*}
We also define the exact regularized Gauss-Newton model of the Lagrangian 
at the $j$-th iteration by
\begin{equation} \label{eq:modelL}
	m_j^l(s) \; \triangleq \;
	\frac{1}{2}\| F_j+J^F_j s\|^2 +y^{\top}_j J^c_j s 
	+ \frac{1}{2} \gamma_j \|s\|^2+ y^{\top}_j C(x_j),
\end{equation}
where $F_j\triangleq F(x_j)$, $J^F_j\triangleq J^f(x_j)$ denotes the 
Jacobian of the residual function $F$ at the current iterate $x_j$, and 
the vector $y_j\in \real^p$ is a current estimate for the 
Lagrange multiplier. 

The exact tangent step is given as the solution of
\begin{equation}\label{tangentstep}
\begin{array}{rl}
\displaystyle \min_{t \in \real^d} & m_j^l(n_j +t), \\
\text{s.t.} & J^c_j t = 0,
\end{array}
\end{equation}
Note that for any $t \in \real^d$, the objective of~\eqref{tangentstep} decomposes as
\begin{equation*}
m_j^l(n_j+t) \; = \; m_j^l(0) + \frac{1}{2} \langle H_j n_j,  n_j \rangle + \frac{1}{2} \langle H_j t,  t \rangle +   \langle  \nabla \mathcal{L}(x_j,y_j) ,n_j \rangle + \langle  g_j,t \rangle.
\end{equation*}
with $H_j \triangleq {J_j^F}^{\top}J_j^F  + \gamma_j I_m$, 
$ \nabla \mathcal{L}(x_j,y_j)  = {J_j^F}^{\top} F_j +{J^c_j}^{\top}y_j $ and 
$g_j \triangleq \nabla \mathcal{L}(x_j,y_j) +  H_j n_j$. 

Problem~\eqref{tangentstep} can be reformulated as an unconstrained linear least-squares 
problem. Indeed, let $W_j \triangleq W(x_j) \in \real^{d \times d}$ denote a projection 
matrix onto the null space of $J^c(x)$: then $W_j=W_j^\top=W_j^2$ and, for any $t$ such 
that $J^c_j t=0$, there exists $w \in \real^d$ such that $t=W_jw$. Using the reformulation 
$t=W_jw$, the minimization subproblem~\eqref{tangentstep} is thus equivalent to
\begin{equation}\label{tangentstep2}
\begin{array}{rl}
\displaystyle \min_{w \in \real^d} & 
\frac{1}{2} \langle W_j^{\top}H_j W_jw, w  \rangle + \langle W_j^{\top} g_j,w \rangle
\end{array}
\end{equation}
To compute the exact tangential step, one can compute a solution $w^*$ 
of~\eqref{tangentstep2} then apply $W_j$ to obtain the solution $t^*=W_j w^*$ 
of~\eqref{tangentstep}.

One challenge of the approach above is the computation of the matrix $W_j$, which can 
be prohibitive in a large-scale environment. We will thus adopt a matrix-free 
approach~\cite{heinkenschloss2014matrixfreeSQP}: given $w \in \real^d$, 
the vector $t=W_jw$ can be computed by solving the following augmented system
\begin{equation} \label{compute_W_j_exactly}
\begin{pmatrix} I & {J_j^c}^{\top} \\  J_j^c & 0 \end{pmatrix} 
\begin{pmatrix} t\\  z \end{pmatrix} 
= 
\begin{pmatrix} w\\  0 \end{pmatrix}.
\end{equation}
As long as ${J^c_j}$ is surjective, the linear system~\eqref{compute_W_j_exactly} 
possesses a solution. An \emph{inexact solve} of the linear system~\eqref{compute_W_j_exactly} 
corresponds to applying an approximation $\widetilde W_j(\cdot)$ of $W_j$ instead of 
applying the projection matrix $W_j$. Therefore, it can be 
shown~\cite{heinkenschloss2014matrixfreeSQP} that such an inexact solve 
of~\eqref{tangentstep2} corresponds to an exact solve of the following subproblem:
\begin{equation}\label{tangentstep2_approx}
\begin{array}{rl}  
\displaystyle \min_{w \in \real^d} & \frac{1}{2} \langle H_j \widetilde W_j(w),  
\widetilde W_j (w)  \rangle + \langle  \widetilde W_j (g_j), \widetilde W_j (w) \rangle,
\end{array}
\end{equation}
which is itself equivalent to 
\begin{equation}\label{tangentstep_approx}
\begin{array}{rl}
\displaystyle \min_{\widetilde t \in \real^d} & \frac{1}{2} \langle  H_j {\widetilde t}, {\widetilde t}  \rangle + \langle \widetilde W_j (g_j),{\widetilde t} \rangle, 
\end{array}
\end{equation}
\revised{with the change of variables ${\widetilde t}=\widetilde W_j(w)$.}
The approximate solutions of (\ref{tangentstep2_approx}) and (\ref{tangentstep_approx}) are respectively denoted by $w_j$ and \revised{${\widetilde t_j}=\widetilde W_j (w_j)$}. 
The quality of the inexact step ${\widetilde t_j}$ will be measured by the inexact model:
\begin{equation*}
\widetilde m^l_j(n_j+ \widetilde t_j) \triangleq   m_j^l(0) + \frac{1}{2} \langle H_j n_j,  n_j \rangle + \frac{1}{2} \langle H_j \widetilde t_j, \widetilde  t_j \rangle +   \langle  \nabla \mathcal{L}(x_j,y_j) ,n_j \rangle + \langle  \widetilde W_j (g_j),\widetilde t_j \rangle.
\end{equation*}

In order for our step to be sufficiently informative and useful, we will impose 
the following conditions on the operator $\widetilde W_j$: 

\begin{subequations} \label{eq:condWjapprox}
\begin{eqnarray}
\label{eq:condWjapproxn} 
\|\widetilde W_j (n_j)  - W_j n_j\| &\le  &\frac{\xi_0}{\gamma^2_j} \\
\label{eq:condWjapproxg} 
\|\widetilde W_j (g_j)  - W_j g_j\| &\le  &\frac{\xi_1}{\gamma_j}  
\end{eqnarray}
\end{subequations}
for some $\xi_0>0$ and $\xi_1>0$.  
It can be shown~\cite[Theorem B.1]{heinkenschloss2014matrixfreeSQP} that 
the use of inexact linear algebra leads to an approximation $\widetilde W_j$ 
of $W_j$ such that~\eqref{eq:condWjapproxn} 
and~\eqref{eq:condWjapproxg} hold. In our experiments, this corresponds 
to applying the \texttt{minres}~\cite{SCTChoi_CCPaige_MASaunders_2011} solver 
to~\eqref{compute_W_j_exactly} with appropriate choices for the 
hyperparameters so as to satisfy the conditions described above.

Similarly to the quasi-normal step, we also require $\widetilde t_j$ to 
satisfy a fraction of the Cauchy decrease condition on the model 
$\widetilde m^l_j$, i.e.
\begin{equation}\label{eq:treq:inexact} 
\widetilde m^l_j(n_j)-\widetilde m^l_j(n_j + \widetilde t_j)
\ge \kappa_2 \frac{\|\widetilde W_j (g_j)\|^2}{\|J^F_j\|^2+\gamma_j}, 
\end{equation}
for some constant \revised{$\kappa_2>0$}.

Note that since we consider the use of inexact steps, the vector 
$\widetilde t_j=\widetilde W_j (w_j)$ might not belong to the null space of $J^c_j$. 
Following previous methodology proposed for matrix-free SQP 
trust region~\cite{heinkenschloss2002analysis,heinkenschloss2014matrixfreeSQP}, we 
compute a step $t_j$ close to the projection of $\widetilde t_j$ onto this 
null space. More precisely, we enforce the following requirement on $t_j$: 
\begin{equation} \label{eq:tminusWtildet} 
\| t_j  - W_j \widetilde t_j\| \le  \frac{\xi_2}{\gamma_j^2}. 
\end{equation}

\subsection{Nonmonotone acceptance rule}
\label{subsec:algo:nonmonotone}

Having computed our inexact steps, we now need to determine whether they are 
sufficiently promising to deserve acceptance. As in standard SQP trust-region 
approaches, we compare the decrease predicted by the model and the actual variation 
produced by the step. Classical monotone frameworks require the actual reduction to be 
larger than a fraction of the predicted reduction, which may impose unnecessarily 
severe restrictions on the step~\cite{MUlbrich_SUlbrich_2003}. We thus adopt a 
nonmonotonic step acceptance procedure detailed below. 

We first define the \emph{predicted reduction} and the \emph{actual reduction}
to the constraint violation function by
\begin{eqnarray*}
\pred^c_j &\triangleq &\phi(x_j)-m^c_j(n_j) = \frac{1}{2} \|C(x_j)\|^2 - m^c_j(n_j), \\
\ared^c_j &\triangleq &\phi(x_j)-\phi(x_j+s_j) 
=\frac{1}{2} \|C(x_j)\|^2 - \frac{1}{2} \|C(x_j+s_j)\|^2.
\end{eqnarray*}
In a monotone framework, the quasi-normal step $n_j$ is accepted if $\ared^c_j$ is 
larger than a fraction of the predicted reduction $\pred^c_j$: our nonmonotonic 
approach requires instead that
$
\rared^c_j\ge \rho_1 \pred^c_j, 
$
where $\rho_1 \in (0,1)$, and $\rared^c_j$ defines the 
\emph{relaxed actual reduction} of $\phi$, i.e.
\begin{equation} \label{eq:rared_c}
\rared^c_j \; \triangleq \;  \frac{1}{2} \max \left \{ R_j, 
\sum^{\nu^c_j -1}_{k=0} \mu^c_{jk} \|C_{j-k}\|^2 \right \}
- \frac{1}{2} \|C(x_j+s_j)\|^2,
\end{equation}
where $\nu \in \mathbb{N}^*$, $\mu\in(0,1/m)$, and the quantities $R_j$, $\mu_{jk}^c$, 
$\nu^c_j$ satisfy
\begin{equation*}\label{eq:params_filter}
\nu^c_j \triangleq  \min(j+1,\nu), \, 
\mu^c_{jk}\ge \mu>0,\, 
\sum^{\nu^c_j -1}_{k=0} \mu^c_{jk}=1, \,R_j\ge \|C_j\|^2
\end{equation*}

In order to compute the quantity $R_j$, we rely on an auxiliary 
procedure described in Algorithm~\ref{alg:R}. If 
the constraint violation is not significantly smaller than $\|\hat{g}_j\|$, where 
$\hat{g}_j = \widetilde W_j (g_j)$ is the reduced gradient, then $R_j$ is set to 
$\|C_j\|^2$ so as to give preference to steps that improve feasibility. On the 
other hand, if the constraint violation is much smaller than the norm of the reduced 
gradient, $R_j$ is set to a value larger than $\|C_j\|^2$ but smaller than a given 
upper bound $a_{k_j}$ where \revised{$\{a_k\}$}     is a slowly decreasing sequence such that
$
a_k>0,\,0<\alpha_0\le \frac{a_{k+1}}{a_k} <1, \, \lim_{k\to \infty} a_k=0,~\mbox{and}
 \,\sum_{k=0}^\infty a^\eta_k=\infty$,
where $4>\eta>4/3$ is a fixed constant (see~\cite{MUlbrich_SUlbrich_2003} for 
details on this procedure).
\begin{algorithm}[ht!]
\caption{\bf \bf Updating procedure for $R_j$}
\label{alg:R}
\begin{algorithmic}[1]
\REQUIRE $\alpha>0$, $\beta<1/2$, $k_j$ and \revised{$\{a_k\}$}.
\smallskip
\hrule
\smallskip
\IF{$\|C_j\|<\min\{\alpha a_{k_j},\beta\|\hat{g}_j\|\}$}
\STATE $R_j=\min\{a^2_{k_j},\|\hat{g}_j\|^2\}$.
\IF{$R_j\ge \sum^{\nu^c_j -1}_{k=0} \mu^c_{jk} \|C_{j-k}\|^2$} 
\STATE Set $k_{j+1}=k_j+1$.
\ELSE
\STATE Set $k_{j+1}=k_j$.
\ENDIF
\ELSE
\STATE Set $R_j=\|C_j\|^2$ and $k_{j+1}=k_j$.
\ENDIF
\end{algorithmic}
\end{algorithm}

As for the quasi-normal steps, we now introduce a nonmonotone acceptance rule 
for the tangential step. 
The predicted reduction for the step $\widetilde s_j = n_j + \tilde{t}_j$ is 
computed as:
\begin{equation*}
\pred^t_j 
\triangleq \widetilde m^l_j(n_j) -  \widetilde m^l_j(n_j+\widetilde t_j) 
= - \frac{1}{2} \langle H_j \widetilde t_j, \widetilde t_j  \rangle 
- \langle \widetilde W_j (g_j), \widetilde t_j \rangle,
\end{equation*}
while the predicted reduction for the step $s_j=n_j+t_j$ is defined as
\begin{equation*}
\pred^l_j 
\triangleq \widetilde m^l_j(0) -  \widetilde m^l_j(\widetilde s_j) 
+ \frac{1}{2}\langle \gamma_j t_j+g_j,n_j-\widetilde W_j (n_j)\rangle,
\end{equation*}
The actual reduction of the Lagrangian function $\mathcal{L}$ can be written as
\begin{equation*}
\ared^l_j \triangleq \mathcal{L}(x_j,y_j) - \mathcal{L}(x_j+s_j,y_j)
\end{equation*}
and finally the relaxed (nonmonotone) actual reduction of $\mathcal{L}$ is defined by,
\begin{equation*}
\rared^l_j \triangleq \max \left \{  \mathcal{L}(x_j,y_j), \sum^{\nu^l_j -1}_{k=0} \mu^l_{jk} \mathcal{L}(x_{j-k},y_{j-k}) \right \} - \mathcal{L}(x_j+s_j,y_j),
\end{equation*}
where, similarly to~\eqref{eq:rared_c}, $\mu$ is chosen as in~\eqref{eq:rared_c}, and
\[
\nu^l_j = \min(j+1,\nu^l), \, \mu^l_{jk}\ge \mu>0,\, \sum^{\nu^l_j -1}_{k=0} \mu^l_{jk}=1.
\]
with $\nu^l \in \mathbb{N}^*$ (Note that for simplicity, we 
may, and do, in our numerical experiments, choose $\nu^l=\nu$).

\revised{
Overall, in our proposed algorithm, two sets of conditions can lead to acceptance of 
the step $s_j$. First, if the step $s_j$ satisfies  
 $\pred_j^t \ge \max\{\pred_j^c, (\pred_j^c)^\xi \}$, $\pred_j^l \ge\rho_2 \pred_j^t$, $\rared_j^l \ge \rho_1 \pred_j^l$, and $\rared_j^c \ge \rho_1 \pred_j^c$ (where $\xi$, $\rho_2$ and $\rho_1$ are pre-specified constants), then 
this step can improve both optimality and feasibility (in a nonmonotone sense), and we thus accept it. Secondly, if $s_k$ 
satisfies  $\pred_j^t < \max\{\pred_j^c, (\pred_j^c)^\xi \}$, $\pred_j^l < \rho_2 \pred_j^t$ and $\rared^c_j \ge \rho_1\pred^c_j$, we accept this step and focus on improving feasibility (in a nonmonotone 
sense).}



\subsection{Main algorithm}
\label{subsec:algo:fullalgo}

A formal description of the complete algorithm is given in 
Algorithm~\ref{alg:LMeq}. Note that it encompasses both exact and inexact 
variants of our method.
Note also that we do not need to specify a procedure to compute the Lagrange 
multiplier estimate, as those do not play a major role in our global 
convergence theory. One standard choice, that we adopted in our numerical 
experiments, is the least-squares multipliers, i.e. the solution to 
$\min_y \|g_j-J^c_j y\|^2_2$ (note that this subproblem is another 
unconstrained linear least-squares problem).

\begin{algorithm}[ht!]
\caption{\bf \bf A nonmonotone matrix-free LM for equality constraints.}
\label{alg:LMeq}
\begin{algorithmic}[1]
\REQUIRE $\rho_1,\rho_2 \in (0,1)$, $0<\hat\gamma_1<1<\hat\gamma_2$, 
$0<\alpha,\beta<1/2$, $2/3<\xi<1$ $0<\hat\gamma<1$, $\alpha_0\in(0,1)$, and 
a sequence \revised{$\{a_k\}$}. $k_0 = 0$. 
\smallskip
\hrule
\smallskip
\STATE Choose an initial $x_0$ and $\gamma_j>0$.
\FOR{$j= 0,  1, \ldots$}
\STATE \textbf{Step 1:} Evaluate $F_j$, $J^F_j$, $C_j$, $J^c_j$, $g_j$ and 
a Lagrange multiplier estimate $y_j$.
\STATE \textbf{Step 2:}  Choose \smash{$\{\mu^c_{jr}\}$} and \smash{$\{\mu^l_{jr}\}$}, 
then update $R_j$ using Algorithm~\ref{alg:R}.
\STATE \textbf{Step 3:}  Compute $n_j$ such that condition (\ref{eq:decrease:c}) holds and $\widetilde t_j$ satisfying the conditions~\eqref{eq:condWjapproxn}, \eqref{eq:condWjapproxg}, and \eqref{eq:treq:inexact}. Set $\widetilde s_j=n_j+\widetilde t_j$.
\STATE \textbf{Step 4:}  Compute $t_j$ satisfying  the condition (\ref{eq:tminusWtildet})  and set $ s_j=n_j+ t_j$\;
\STATE \textbf{Step 5:} 
\IF{$\pred^t_j\ge \max\{\pred^c_j,(\pred^c_j)^\xi\}\text{ and }  \pred^l_j\ge \rho_2 \pred^t_j$}
\IF{$\rared^c_j\ge \rho_1 \pred^c_j$ and $\rared^l_j\ge \rho_1 \pred^l_j$}
\STATE Set $\gamma_j=\max(\gamma_{min},\hat\gamma_1\gamma_j)$ and accept the step,i.e., $x_{j+1}=x_j+s_j$.
\ELSE
\STATE Set $\gamma_j=\hat\gamma_2\gamma_j$ and go to \textbf{Step 4}.
\ENDIF
\ELSE 
\IF{$\rared^c_j\ge \rho_1 \pred^c_j$}
\STATE Set $\gamma_j=\max(\gamma_{min},\hat\gamma_1\gamma_j)$ and accept the step,i.e., $x_{j+1}=x_j+s_j$.\ELSE
\STATE Set $\gamma_j=\hat\gamma_2\gamma_j$ and go to \textbf{Step 4}.
\ENDIF
\ENDIF
\ENDFOR
\end{algorithmic}
\end{algorithm}


\section{Global convergence}
\label{sec:globcv}

\subsection{Assumptions and intermediary results}
\label{subsec:globcv:assum}

We will establish global convergence of Algorithm~\ref{alg:LMeq} under the 
following standard set of assumptions.

\begin{assumption} \label{assum:compactxj}
The sequence $\{x_j,x_j+s_j\}$ lies in a compact set $\Omega$.
\end{assumption}

\begin{assumption} \label{assum:smoothfphi}
\revised{
The functions $F$ and $C$ are continuously differentiable (thus 
$\phi$ is too). In addition, the gradients of the functions $f$, $\phi$ 
and $C_i$ are Lipschitz continuous.}
\end{assumption}

Though the rest of the paper, $L^f$ and $L_{\phi}$ will denote 
Lipschitz constants for the gradients of $f$ and $\phi$, respectively.
Note that Assumption~\ref{assum:smoothfphi} implies that the constraint
Jacobian $J^c(\cdot)$ is also Lipschitz continuous: through the rest of the  
paper, $L^c$ will be used as the Lipschitz constant for this Jacobian matrix.

Assumptions~\ref{assum:compactxj} and~\ref{assum:smoothfphi} imply 
that the functions $F$, $C$, $f$, $\phi$ and their derivatives are bounded. 
In  what follows, we will make use of constants $\kappa^f,\kappa^\phi, 
\kappa^f_g,\kappa^\phi_g,\kappa^F_J,\kappa^c_J$ such that 
for any $x \in \Omega$, we have
\begin{subequations} \label{eq:boundsFCfphi}
\begin{align}
\label{eq:boundsfphi0}
f(x) \le \kappa^f,~~\qquad \phi(x) &\le \kappa^\phi, \\
\label{eq:boundsfphi1}
\|\nabla f(x)\| \le \kappa^f_g,\quad \|\nabla \phi(x)\| &\le \kappa^\phi_g, \\
\label{eq:boundsFC1}
\|J^F(x)\| \le \kappa^F_J,\quad \|J^c(x)\| &\le \kappa^c_J.
\end{align}
\end{subequations}

We will add the following assumption to the above properties.

\begin{assumption} \label{assum:boundsJJW}
There exists $\kappa^c_{JJ}>0$ and $\kappa_W>0$ such that
such that for every index $j$, we have
$\left\| \left(J^c_j {J^c_j}^\top\right)^{-1} \right\| \le \kappa^c_{JJ}$
and $\|\widetilde W_j(x)\| \le \kappa_W \|x\|$ for any $x$.  
\end{assumption}

\begin{assumption} \label{assum:yj}
There exists $\kappa_y>0$ such that 
$\|y_j\| \le \kappa_y$ for every $j$.
\end{assumption}

Equipped with these assumptions, we can now state and prove several 
bounds on algorithmic quantities. To this end, we first state  
properties of the quasi-normal and tangential steps in 
Lemma~\ref{lem:stepbounds}. Note that those arise from the analysis 
of the two unconstrained problems~\eqref{normalstep} and~\eqref{tangentstep2}, 
and apply to exact as well as inexact solutions of these subproblems 
(see~\cite[Lemma 2.1]{EBergou_YDiouane_VKungurtsev_2017} 
and~\cite[Lemma 5.1]{EBergou_SGratton_LNVicente_2016} for details).

\begin{lemma}\label{lem:stepbounds} 
Under Assumptions~\ref{assum:compactxj} to~\ref{assum:yj}, 
for all $j$, one has:
\begin{subequations} \label{eq:nbounds}
\begin{eqnarray}
\label{eq:nbound2}
\|n_j\| &\le &\frac{\|{J^c_j}^{\top} C_j\|}{\gamma_j}, \\
\label{eq:nbound3}
\|\gamma_j n_j+{J^c_j}^{\top} C_j\| &\le &\frac{\|J^c_j\|^2\|{J^c_j}^{\top} C_j\|}{\gamma_j}, 
\end{eqnarray}
\end{subequations}
and
\begin{subequations} \label{eq:tbounds}
\begin{eqnarray}
\label{eq:tbound1}
\| \widetilde t_j \| &\le &\frac{\|\widetilde W_j (g_j)\|}{\gamma_j}, \\
\label{eq:tbound2}
\| \gamma_j \widetilde t_j+  \widetilde W_j (g_j)\| 
&\le &\frac{\|J^F_j\|^2\|\widetilde W_j (g_j)\|}{\gamma_j}.
\end{eqnarray}
\end{subequations}
\end{lemma}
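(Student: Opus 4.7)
The plan is to exploit the strong structural parallel between the quasi-normal subproblem~\eqref{normalstep} and the reformulated tangential subproblem~\eqref{tangentstep_approx}: both are unconstrained regularized quadratic minimization problems of the form $\min_v \tfrac{1}{2}\langle (A^\top A + \gamma_j I)v,v\rangle + \langle b,v\rangle$, with $A$ being $J_j^c$ or $J_j^F$ and $b$ being ${J_j^c}^\top C_j$ or $\widetilde W_j(g_j)$. A single template proof therefore covers all four bounds, and I would instantiate it first for the quasi-normal step and then for the tangential step.

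For the exact minimizer $n_j$ of~\eqref{normalstep}, the first-order stationarity condition reads $({J_j^c}^\top J_j^c + \gamma_j I)\,n_j = -{J_j^c}^\top C_j$. Since the smallest eigenvalue of ${J_j^c}^\top J_j^c + \gamma_j I$ is at least $\gamma_j$, the operator norm of its inverse is bounded by $1/\gamma_j$, which yields~\eqref{eq:nbound2} directly. Rearranging the same stationarity identity as $\gamma_j n_j + {J_j^c}^\top C_j = -{J_j^c}^\top J_j^c\,n_j$ and taking norms gives $\|\gamma_j n_j + {J_j^c}^\top C_j\| \le \|J_j^c\|^2\,\|n_j\|$; combining with~\eqref{eq:nbound2} produces~\eqref{eq:nbound3}. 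An entirely analogous argument, applied to the stationarity condition $H_j \widetilde t_j = -\widetilde W_j(g_j)$ of the tangential subproblem~\eqref{tangentstep_approx}, establishes~\eqref{eq:tbound1} and~\eqref{eq:tbound2}, with $J_j^F$ playing the role of $J_j^c$ and $\widetilde W_j(g_j)$ playing the role of ${J_j^c}^\top C_j$.

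The remaining issue is that Algorithm~\ref{alg:LMeq} only requires approximate solutions satisfying a fraction-of-Cauchy-decrease condition, rather than the exact minimizers. To handle this, I would invoke the standard Cauchy-point analysis already carried out in~\cite{EBergou_YDiouane_VKungurtsev_2017,EBergou_SGratton_LNVicente_2016}: the Cauchy step has the form $-\alpha \nabla m(0)$ with $\alpha$ the exact one-dimensional minimizer along that direction, and a direct computation using the fact that the curvature of the quadratic model is at least $\gamma_j$ in every direction yields the same bounds on both the step norm and the stationarity residual. Inexact steps produced by a Krylov solver started at the origin inherit these bounds because their norms are dominated by that of the exact minimizer. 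The main obstacle is thus essentially bookkeeping — keeping the two parallel subproblems and their respective matrices and vectors clearly distinguished — rather than any genuinely new analytical difficulty.
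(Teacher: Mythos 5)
Your overall route is the same as the paper's: the paper does not prove Lemma~\ref{lem:stepbounds} in-line but defers to \cite[Lemma 2.1]{EBergou_YDiouane_VKungurtsev_2017} and \cite[Lemma 5.1]{EBergou_SGratton_LNVicente_2016}, and those proofs rest on exactly the stationarity identities you write down. Your treatment of the exact minimizers is correct, and so is the claim that the Cauchy point satisfies all four bounds: for $n_C=-\alpha\,{J^c_j}^\top C_j$ one computes $\alpha\le 1/\gamma_j$ and $1-\alpha\gamma_j\le \|J^c_j\|^2/\gamma_j$, which gives both \eqref{eq:nbound2} and \eqref{eq:nbound3}.

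The gap is in your final step, where you cover general inexact (Krylov) iterates by the norm-domination property $\|n_j\|\le\|n_j^*\|$. That argument delivers only the step-norm bounds \eqref{eq:nbound2} and \eqref{eq:tbound1}; it cannot deliver the residual bounds \eqref{eq:nbound3} and \eqref{eq:tbound2}. Indeed, from $\|n_j\|\le \|{J^c_j}^\top C_j\|/\gamma_j$ alone the best available estimate is $\|\gamma_j n_j+{J^c_j}^\top C_j\|\le \gamma_j\|n_j\|+\|{J^c_j}^\top C_j\|\le 2\|{J^c_j}^\top C_j\|$, which does not decay like $1/\gamma_j$ --- and that decay is precisely what Lemmas~\ref{lem:boundactpred} and~\ref{lem:welldefined} exploit in the regime where $\gamma_j$ is driven large. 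To close this you must use the optimality of the inexact step over its subspace, not merely its norm: if $n_j$ minimizes $m_j^c$ over a Krylov subspace $\mathcal{K}$ generated from ${J^c_j}^\top C_j$, the residual $({J^c_j}^\top J^c_j+\gamma_j I)n_j+{J^c_j}^\top C_j$ is orthogonal to $\mathcal{K}$ while $\gamma_j n_j+{J^c_j}^\top C_j$ lies in $\mathcal{K}$ (both of its terms do); projecting the identity
\begin{equation*}
\gamma_j n_j+{J^c_j}^\top C_j \;=\; -{J^c_j}^\top J^c_j n_j+\bigl[({J^c_j}^\top J^c_j+\gamma_j I)n_j+{J^c_j}^\top C_j\bigr]
\end{equation*}
onto $\mathcal{K}$ yields $\gamma_j n_j+{J^c_j}^\top C_j=-P_{\mathcal{K}}\bigl({J^c_j}^\top J^c_j n_j\bigr)$, and \eqref{eq:nbound3} follows from $\|P_{\mathcal{K}}\|=1$ together with \eqref{eq:nbound2}. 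The identical argument with $H_j$ and $\widetilde W_j(g_j)$ gives \eqref{eq:tbound2}. A minor caveat worth flagging in addition: the fraction-of-Cauchy-decrease conditions \eqref{eq:decrease:c} and \eqref{eq:treq:inexact} alone do not imply any of these bounds (an arbitrarily long step can still achieve the required decrease), so the lemma genuinely relies on the steps being subspace minimizers --- an assumption that both you and the paper leave implicit.
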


We can then prove the following series of bounds.

\begin{lemma}\label{lem:usefulbounds}
Under Assumptions~\ref{assum:compactxj} to~\ref{assum:yj}, 
the sequences $\left \{\left\|g_j\right\| \right\}$,
$\left\{ \gamma_j \left\| n_j\right\|\right\}$, 
$\left\{\gamma_j \left\|\widetilde t_j\right\|\right\}$, 
$\left\{\gamma_j \left\| t_j\right\| \right\}$, 
$\left\{\gamma_j \left\|\widetilde s_j\right\|\right\}$,  
$\left\{\gamma_j \left\|s_j\right\|\right\}$ and 
$\left\{\gamma_j \| \gamma_j \widetilde t_j+ \widetilde W_j (g_j)\| \right\}$ are 
uniformly bounded from above by a positive constant $b_0>0$.
\end{lemma}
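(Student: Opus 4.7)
The plan is to establish each uniform bound by propagating through the list in the stated order, using Lemma 2.1 together with the boundedness of $F$, $C$, and their Jacobians on the compact set $\Omega$ (recall \eqref{eq:boundsFCfphi}), Assumption 3.3 on $\widetilde W_j$, Assumption 3.4 on $y_j$, and the fact that the updating rule in Algorithm \ref{alg:LMeq} maintains $\gamma_j \ge \gamma_{min} > 0$ throughout the run.

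First, I would handle $\gamma_j \|n_j\|$ by multiplying \eqref{eq:nbound2} by $\gamma_j$, which gives
$\gamma_j \|n_j\| \le \|{J^c_j}^\top C_j\| \le \kappa^c_J \sqrt{2 \kappa^\phi}$. The lower bound $\gamma_j \ge \gamma_{min}$ then yields a uniform bound on $\|n_j\|$ itself. Second, I would bound $\|g_j\|$ by expanding $g_j = \nabla \mathcal{L}(x_j,y_j) + H_j n_j$ with $H_j = {J_j^F}^\top J_j^F + \gamma_j I_d$:
\begin{equation*}
\|g_j\| \;\le\; \|{J_j^F}^\top F_j\| + \|{J_j^c}^\top y_j\| + \|J_j^F\|^2 \|n_j\| + \gamma_j \|n_j\|.
\end{equation*}
All four terms are uniformly bounded: the first two via $\kappa_J^F, \kappa_J^c, \kappa_y$ and $F$ bounded from $f \le \kappa^f$; the third via the bound on $\|n_j\|$ just obtained; the fourth via the bound on $\gamma_j \|n_j\|$. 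Third, $\gamma_j \|\widetilde t_j\| \le \|\widetilde W_j(g_j)\| \le \kappa_W \|g_j\|$ by \eqref{eq:tbound1} and Assumption 3.3, so this is bounded. Fourth, for $\gamma_j \|t_j\|$, condition \eqref{eq:tminusWtildet} and $\|W_j\| \le 1$ (since $W_j$ is an orthogonal projector) give $\|t_j\| \le \|\widetilde t_j\| + \xi_2/\gamma_j^2$, whence $\gamma_j \|t_j\| \le \gamma_j \|\widetilde t_j\| + \xi_2/\gamma_{min}$. The bounds on $\gamma_j \|\widetilde s_j\|$ and $\gamma_j \|s_j\|$ then follow from the triangle inequality. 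Finally, for $\gamma_j \|\gamma_j \widetilde t_j + \widetilde W_j(g_j)\|$, I multiply \eqref{eq:tbound2} by $\gamma_j$ to get the bound $(\kappa_J^F)^2 \|\widetilde W_j(g_j)\|$, which is controlled by the bound on $\|g_j\|$. Taking $b_0$ as the maximum of all the constants obtained yields the uniform bound.

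The main obstacle is the bound on $\|g_j\|$: the operator norm $\|H_j\|$ grows linearly with $\gamma_j$, so the apparent difficulty is that $\|H_j n_j\|$ could blow up as the regularization parameter increases during an unsuccessful sequence of inner iterations. The delicate point is to split $H_j n_j$ into its two summands and realize that the growing factor $\gamma_j$ is exactly compensated by the $1/\gamma_j$ decay in $\|n_j\|$ provided by the quasi-normal subproblem bound \eqref{eq:nbound2}, while the other term $\|J_j^F\|^2\|n_j\|$ is controlled because the lower bound $\gamma_j \ge \gamma_{min}$ caps $\|n_j\|$. Once this cancellation is in place, all remaining inequalities are straightforward consequences of Lemma 2.1 and the triangle inequality.
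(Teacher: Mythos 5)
Your proposal is correct and follows essentially the same route as the paper's proof: bound $\|n_j\|$ and $\|g_j\|$ via \eqref{eq:nbound2} and the decomposition of $g_j$ (splitting $H_j n_j$ so that the $\gamma_j$ factor cancels against the $1/\gamma_j$ decay of $\|n_j\|$), then propagate through $\widetilde t_j$, $t_j$, $\widetilde s_j$, $s_j$ and $\gamma_j\widetilde t_j+\widetilde W_j(g_j)$ using \eqref{eq:tbound1}, \eqref{eq:tbound2}, \eqref{eq:tminusWtildet}, $\|W_j\|=1$ and $\gamma_j\ge\gamma_{\min}$. The only cosmetic differences are the order of the first two bounds and your use of $\|{J^c_j}^\top C_j\|\le\kappa^c_J\sqrt{2\kappa^\phi}$ where the paper invokes $\|\nabla\phi(x_j)\|\le\kappa^\phi_g$ directly.
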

\begin{proof}
Since $g_j={J_j^F}^\top F_j + {J^c_j}^\top y_j + {J^F_j}^\top J^F_j n_j + \gamma_j n_j$, we have:
\begin{equation*}
\|g_j\| \; \le \; \| {J_j^F}^{\top}F_j\| + \|{J^c_j}\| \|y_j\|+ \| {J_j^F}^{\top}J_j^F+\gamma I_d\| \| n_j\|.
\end{equation*}
Using the bounds~\eqref{eq:boundsFCfphi} and~\eqref{eq:nbound2},  we obtain:
\begin{eqnarray*}
\|g_j\| & \le & \kappa_g^f+ \kappa^c_J \kappa_y+ {\kappa^F_J}^2 \frac{\|{J_j^c}^{\top}C_j\| }{\gamma_j} +  \|{J_j^c}^{\top}C_j\|  \le   \kappa_g^f+
 \kappa^c_J\kappa_y+ \frac{{\kappa^F_J}^2 \kappa_g^\phi}{\gamma_{min}}+  \kappa_g^\phi 
 \triangleq a_0.
\end{eqnarray*}
Using the bound on $\|g_j\|$ together with~\eqref{eq:tbound1}, we obtain:
\begin{equation*}
\| \widetilde t_j \|  \le \frac{\|\widetilde W_j (g_j)\|}{\gamma_j}  
\le \frac{\|\widetilde W_j\| \| g_j\|}{\gamma_j}  
\le\frac{\kappa_W a_0}{\gamma_j}.
\end{equation*}
%
Since $\|W_j\|=1$ as $W_j$ is a projection matrix, 
and~\eqref{eq:tminusWtildet} holds, we also have;
\begin{eqnarray*}
\|t_j\| &\le & \|W_j \widetilde t_j \| + \| t_j- W_j \widetilde t_j\|  
\le  \frac{\kappa_W a_0 }{\gamma_j} + \frac{\xi_2}{\gamma_j^2}    
\le \frac{\kappa_W a_0 +\xi_2\gamma_{\min}^{-1}}{\gamma_j },
\end{eqnarray*}
Meanwhile, property~\eqref{eq:nbound2} guarantees that
$
\|n_j\| \le \frac{\|{J^c_j}^{\top} C_j\|}{\gamma_j}
\le \frac{\kappa_g^\phi}{\gamma_j}
$.

Thanks to the three previous bounds on $\|t_j\|$, $\|\widetilde t_j\|$ and 
$\|n_j\|$, we then obtain 
$\|\widetilde s_j\| \le \|n_j\| + \| \widetilde t_j \| 
\le  \frac{\kappa_g^\phi+\kappa_W a_0}{\gamma_j}$, as well as
\begin{equation}\label{eq:sbound}
\|s_j\| 
\le \|n_j\| + \|t_j \|      
\le \frac{\kappa_g^\phi+\kappa_W^2 a_0 +\xi_2\gamma_{\min}^{-1}}{\gamma_j}.
\end{equation}
Finally, property~\eqref{eq:tbound2} in Lemma~\ref{lem:usefulbounds} gives
\begin{eqnarray*}
\| \gamma_j \widetilde t_j+ \widetilde W_j (g_j)\|  
&\le 
&\frac{\|J^F_j\|^2\|\widetilde W_j (g_j)\|}{\gamma_j}  
\le
\frac{(\kappa^F_J)^2 \kappa_W a_0}{\gamma_j}.
\end{eqnarray*}
Setting $b_0  \triangleq \max\{a_0,\kappa_W a_0,   
\kappa_g^\phi+\kappa_W a_0 + \xi_2 \gamma_{\min}^{-1},(\kappa^F_J)^2 \kappa_W a_0\}$ 
gives the desired result.
\end{proof}

The result of Lemma~\ref{lem:usefulbounds} allows us to bound the difference between actual 
and predicted reductions relatively to the regularization parameter.

\begin{lemma}\label{lem:boundactpred} 
Under Assumptions~\ref{assum:compactxj} to~\ref{assum:yj}, there 
exist positive constants $b_1$ and $b_2$ such that 
for every iteration index $j$, one has:
\begin{subequations} \label{eq:boundactpred}
\begin{eqnarray}
\label{eq:boundactpred:3}
|\ared^c_j-2\pred^c_j| &\le &\frac{b_1}{\gamma_j^2},\\
\label{eq:boundactpred:4}
|\ared^l_j-2\pred^l_j| &\le &\frac{b_2}{\gamma_j^2}.
\end{eqnarray}
\end{subequations}
\end{lemma}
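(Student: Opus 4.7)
The plan is to control both quantities by expanding the differences algebraically, substituting second-order Taylor expansions that are justified by the Lipschitz continuity in Assumption~\ref{assum:smoothfphi}, and absorbing all residual terms using the uniform bounds collected in Lemma~\ref{lem:usefulbounds} together with the inexactness conditions~\eqref{eq:condWjapprox} and~\eqref{eq:tminusWtildet}. The central observation that makes the factor $2$ show up naturally is that $2 m_j^c(n_j) = \|C_j + J^c_j n_j\|^2 + \gamma_j\|n_j\|^2$ and $2 \widetilde m_j^l(0) = \|F_j\|^2 + 2 y_j^\top C_j$, so after rearrangement one ends up comparing $\|C_j + J^c_j n_j\|^2$ with $\|C(x_j+s_j)\|^2$, and similarly for the Lagrangian model.

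For~\eqref{eq:boundactpred:3}, I would first rewrite
$\ared_j^c - 2\pred_j^c = 2 m_j^c(n_j) - \phi(x_j) - \phi(x_j+s_j)$,
then apply a first-order Taylor expansion with integral remainder to $C(x_j+s_j) = C_j + J^c_j s_j + R_j(s_j)$, where $\|R_j(s_j)\| \le \tfrac{L^c}{2}\|s_j\|^2$. Expanding $\|C(x_j+s_j)\|^2$ introduces a cross term and a squared remainder, both of order $O(\|s_j\|^2)$, and hence $O(1/\gamma_j^2)$ thanks to the bound $\gamma_j\|s_j\| \le b_0$ from Lemma~\ref{lem:usefulbounds}. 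Next, I decompose $s_j = n_j + t_j$ in $\|C_j + J^c_j s_j\|^2$. Because $W_j$ projects onto $\ker J^c_j$, we have $J^c_j W_j \widetilde t_j = 0$, so $\|J^c_j t_j\| = \|J^c_j(t_j - W_j \widetilde t_j)\| \le \kappa^c_J \xi_2/\gamma_j^2$ by~\eqref{eq:tminusWtildet}; pairing this with the uniformly bounded factor $\|C_j + J^c_j n_j\|$ yields another $O(1/\gamma_j^2)$ error. The residual algebra then reduces to $C_j^\top J^c_j n_j + \tfrac{1}{2}\|J^c_j n_j\|^2 + \gamma_j\|n_j\|^2$, which I rewrite as $n_j^\top(\gamma_j n_j + {J^c_j}^\top C_j) + \tfrac{1}{2}\|J^c_j n_j\|^2$; using~\eqref{eq:nbound3} together with $\|n_j\| \le b_0/\gamma_j$ produces two pieces both of order $O(1/\gamma_j^2)$. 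Collecting constants gives $b_1$.

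For~\eqref{eq:boundactpred:4}, the approach is structurally identical. Since $\mathcal{L}(\cdot,y_j) = f + y_j^\top C$ has a Lipschitz gradient (Assumption~\ref{assum:smoothfphi} combined with Assumption~\ref{assum:yj} controlling $\|y_j\|$), I expand $f(x_j+s_j)$ and $C(x_j+s_j)$ by integral Taylor and bound their quadratic remainders by $O(\|s_j\|^2) = O(1/\gamma_j^2)$. Rearranging $\ared_j^l - 2\pred_j^l$ using the definitions of $\widetilde m_j^l$ and $\pred_j^l$ leaves three families of "residual" terms: terms where $\widetilde W_j(n_j) - W_j n_j$ or $\widetilde W_j(g_j) - W_j g_j$ appear, which are handled by~\eqref{eq:condWjapproxn}--\eqref{eq:condWjapproxg} paired with factors bounded uniformly in $\gamma_j$ (so contributing $O(1/\gamma_j^2)$); terms involving $\gamma_j \widetilde t_j + \widetilde W_j(g_j)$, which fall under~\eqref{eq:tbound2} and Lemma~\ref{lem:usefulbounds}; and terms involving $t_j - W_j \widetilde t_j$ handled by~\eqref{eq:tminusWtildet}. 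As in the previous case, the structural residual $n_j^\top(\gamma_j n_j + {J^c_j}^\top C_j)$ remains and is disposed of through~\eqref{eq:nbound3}.

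The main obstacle is bookkeeping rather than any single deep step: every term of the form $\gamma_j \|\cdot\|^2$ threatens to give only $O(1/\gamma_j)$ and must be paired with an inexactness bound (one of~\eqref{eq:nbound3},~\eqref{eq:tbound2},~\eqref{eq:condWjapprox},~\eqref{eq:tminusWtildet}) that brings in an extra factor of $1/\gamma_j$ to restore the target order $O(1/\gamma_j^2)$. The multiplier $2$ in front of $\pred_j^c$ and $\pred_j^l$ is essential for this cancellation: without it, the leading-order contribution $\tfrac{\gamma_j}{2}\|n_j\|^2$ from the Levenberg–Marquardt regularization would survive and only decay as $O(1/\gamma_j)$.
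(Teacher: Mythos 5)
Your treatment of~\eqref{eq:boundactpred:3} is correct and follows essentially the same route as the paper: the identity $\ared^c_j-2\pred^c_j=2m^c_j(n_j)-\phi(x_j)-\phi(x_j+s_j)$, the Taylor expansion of $\|C(x_j+s_j)\|^2$, the observation that $J^c_jt_j=J^c_j(t_j-W_j\widetilde t_j)$ is $O(\gamma_j^{-2})$ by~\eqref{eq:tminusWtildet}, and the reduction of the leftover to $n_j^\top(\gamma_j n_j+{J^c_j}^\top C_j)+\tfrac{1}{2}\|J^c_jn_j\|^2$ handled by~\eqref{eq:nbound3} all match the paper's computation, as does your explanation of why the factor $2$ is needed.

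The gap is in~\eqref{eq:boundactpred:4}. The quantity $n_j^\top(\gamma_j n_j+{J^c_j}^\top C_j)$ does not reappear there: the gradient of the Lagrangian contains ${J^c_j}^\top y_j$, not ${J^c_j}^\top C_j$, so there is nothing to pair $\gamma_j\|n_j\|^2$ with in the way~\eqref{eq:nbound3} requires. If you carry out the rearrangement you describe, the terms that naively decay only like $O(\gamma_j^{-1})$ collect into $\langle \gamma_j t_j+g_j,n_j\rangle$; note that $\langle g_j,n_j\rangle$ alone already contains $\langle {J^F_j}^\top F_j,n_j\rangle$, $\langle{J^c_j}^\top y_j,n_j\rangle$ and $\gamma_j\|n_j\|^2$, each of size $O(\gamma_j^{-1})$, and none of your three families of residual terms, nor~\eqref{eq:nbound3}, disposes of them. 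The mechanism that actually saves the estimate is different: the correction term $\tfrac{1}{2}\langle\gamma_jt_j+g_j,\,n_j-\widetilde W_j(n_j)\rangle$ built into the definition of $\pred^l_j$ converts $\langle\gamma_jt_j+g_j,n_j\rangle$ into $\langle W_j(\gamma_jt_j+g_j),n_j\rangle$ up to an error controlled by~\eqref{eq:condWjapproxn}, and one then shows $\|W_j(\gamma_jt_j+g_j)\|\le\gamma_j\|t_j-W_j\widetilde t_j\|+\|\gamma_j\widetilde t_j+\widetilde W_j(g_j)\|+\|W_jg_j-\widetilde W_j(g_j)\|\le(\xi_2+b_0+\xi_1)/\gamma_j$ by combining~\eqref{eq:tminusWtildet}, Lemma~\ref{lem:usefulbounds} and~\eqref{eq:condWjapproxg}. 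This projected-residual bound is the one genuinely nontrivial step of the second estimate, and your sketch replaces it with an identity that is not available. The rest of your plan for~\eqref{eq:boundactpred:4} (Taylor expansion of the Lagrangian, the $\langle\gamma_j\widetilde t_j+\widetilde W_j(g_j),\widetilde t_j\rangle$ pairing, and the $t_j-W_j\widetilde t_j$ and $\widetilde W_j-W_j$ error families) is consistent with the paper.
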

\begin{proof}
To lighten the notation, we will omit the index $j$ in the proof. 
We begin by proving~\eqref{eq:boundactpred:3}. Thanks to Assumption~\ref{assum:smoothfphi} and the 
following first-order Taylor expansion of $\phi(\cdot)=\tfrac{1}{2}\|C(\cdot)\|^2$, we 
have:
\begin{small}
$$
\left|\|C(x+s)\|^2 - \|C(x)\|^2 - 2 C^\top J^c s 
- s^\top {J^c}^\top J^c s \right| \le L^\phi\|s\|^2 + \|J^c s\|^2
\le (L^\phi+{\kappa^c_J}^2)\|s\|^2.
$$
\end{small}
Using this formula, we have:
\begin{eqnarray*}
|\ared^c-2\pred^c| 
&= & \left|\frac{1}{2}\|C\|^2 -\frac{1}{2}\|C(x+s)\|^2  
-2 \left(\frac{1}{2}\|C\|^2 - \frac{1}{2} \|C+J^c n\|^2- \frac{1}{2}\gamma \|n\|^2  \right)\right|\\
&= & \left|\frac{1}{2}\|C\|^2 -\frac{1}{2}\|C(x+s)\|^2 - 2 C^\top J^c n 
-\|J^c n\|^2 - \gamma \|n\|^2 \right|\\
&\le &\left|\frac{1}{2}\|C\|^2 + C^\top J^c s + \frac{1}{2}s^\top {J^c}^\top J^c s 
- \frac{1}{2}\|C(x+s)\|^2 \right| \\
&+ &\left| -C^\top J^c s -\frac{1}{2}s^\top {J^c}^\top J^c s
- 2 C^\top J^c n -\|J^c n\|^2 - \gamma \|n\|^2  \right| \\
&\le &\frac{L^\phi+{\kappa^c_J}^2}{2}\|s\|^2 + \left|-C^\top J^c s-\frac{1}{2}s^\top {J^c}^\top J^c s 
- 2 C^\top J^c n -\|J^c n\|^2 - \gamma \|n\|^2  \right|.
\end{eqnarray*}
Using now the decomposition $s=n+t$ and the fact that $J^c W \widetilde t=0$, we can 
reformulate the second term in the last inequality:
\begin{eqnarray*}
& &-C^\top J^c s -\frac{1}{2}s^\top {J^c}^\top J^c s - 2 C^\top J^c n -\|J^c n\|^2 - \gamma \|n\|^2 \\ 
&= &-C^\top J^c t -3 C^\top J^c n - \frac{3}{2} \|J^c n\|^2 - \frac{1}{2}\|J^c t \|^2 
-\gamma\|n\|^2 \\
&= &-(C+J^c n)^\top J^c t -n^\top (\gamma n + {J^c}^\top C) 
- \frac{3}{2} \|J^c n\|^2 - \frac{1}{2}\|J^c t \|^2  \\
&= &-(C+J^c n)^\top J^c (W\widetilde t -t) -n^\top (\gamma n + {J^c}^\top C) 
-\frac{3}{2}\|J^c n\|^2 - \frac{1}{2}\|J^c t \|^2.
\end{eqnarray*}
Hence, we obtain:
\begin{eqnarray*}
|\ared^c-2\pred^c|     
&\le &\frac{L^\phi+{\kappa^c_J}^2}{2}\|s\|^2 + \|(C+J^c n)^\top J^c\| \|W\widetilde t -t\|
+\left|n^\top (\gamma n + {J^c}^\top C)\right| \\
& &+\frac{3}{2}\|J^c\|^2 \|n\|^2 +\frac{1}{2}\|J^c\|^2 \|t\|^2 \\
&\le &\frac{L^\phi+{\kappa^c_J}^2}{2}\|s\|^2 + \left(\|C\|+\|J^c\| \|n\|\right)\|J^c\|\frac{\xi_2}{\gamma^2}    
+ \|n\|\|\gamma n + {J^c}^\top C\| \\
& &+\frac{3}{2}\|J^c\|^2 \|n\|^2 +\frac{1}{2}\|J^c\|^2 \|t\|^2 \\
&\le &\frac{(L^\phi+{\kappa^c_J}^2) b_0^2}{2\gamma^2}+\|J^c\|\left(\|C\|+\|J^c\|\frac{b_0}{\gamma}\right)
\frac{\xi_2}{\gamma^2}  + \frac{b_0\|J^c\|^2 \|{J^c}^\top C\|}{\gamma^2} 
+ \frac{2 b_0\|J^c\|^2}{\gamma^2} \\
&\le &\left[\frac{(L^\phi+{\kappa^c_J}^2) b_0^2}{2}
+\kappa^c_J\left(\kappa^c + \frac{\kappa^c_J b_0}{\gamma_{\min}}\right)\xi_2 
+ b_0{\kappa^c_J}^2 (\kappa^\phi_g+2)\right]\frac{1}{\gamma^2},
\end{eqnarray*}
where we applied~\eqref{eq:condWjapproxg}, \eqref{eq:nbound3}, 
Lemma~\ref{lem:usefulbounds}, \eqref{eq:boundsFCfphi} and $\gamma\ge \gamma_{\min}$.
Hence~\eqref{eq:boundactpred:3} holds with $b_1=\left[\frac{(L^\phi+{\kappa^c_J}^2) b_0^2}{2}
+\kappa^c_J\left(\kappa^c + \frac{\kappa^c_J b_0}{\gamma_{\min}}\right)\xi_2 
+ b_0{\kappa^c_J}^2 (\kappa^\phi_g+2)\right]$.

We now establish~\eqref{eq:boundactpred:4}. The definition of 
$\pred^l$ gives:
\vspace{-2mm}
\begin{eqnarray*}
\pred^l &= 
&- \frac{1}{2} \langle H n, n  \rangle - \frac{1}{2} \langle H \widetilde t  , \widetilde t  \rangle 
- \langle \nabla_x \mathcal{L}(x,y),n \rangle 
- \langle \widetilde W (g),\widetilde t \rangle +\frac{1}{2}\langle \gamma t+g,n-\widetilde W(n)\rangle \\
&= &\frac{1}{2} \langle H n, n  \rangle - \frac{1}{2} \langle H \widetilde t  , \widetilde t  \rangle 
- \langle \nabla_x \mathcal{L}(x,y)+Hn,n+ W\widetilde t \rangle\\
& &- \langle \widetilde W (g)-W g,\widetilde t \rangle +\frac{1}{2}\langle \gamma t+g,n-\widetilde W(n)\rangle \\
&= &\frac{1}{2}\langle Ht,t \rangle- \frac{1}{2} \langle H \widetilde t  , \widetilde t  \rangle 
- \langle \nabla_x \mathcal{L}(x,y)+Hn,W \widetilde t -t \rangle \nonumber \\
& &-\langle \nabla_x \mathcal{L}(x,y),s \rangle - \frac{1}{2} \langle H s,s \rangle  
-\langle \widetilde W (g)-W g,\widetilde t \rangle  +\frac{1}{2}\langle \gamma t+g,n-\widetilde W(n)\rangle,
\end{eqnarray*}
where we first used the formula $g=\nabla_x \mathcal{L}(x,y)+Hn$ and $W=W^\top$, then 
$s=t+n$. Consequently,
\begin{eqnarray*}
-2\pred^l &= &-\langle Ht,t \rangle + \langle H \widetilde t  , \widetilde t  \rangle 
+ 2 \langle \nabla_x \mathcal{L}(x,y)+Hn,W \widetilde t -t \rangle \nonumber \\
& &+ 2\langle \nabla_x \mathcal{L}(x,y),s \rangle + \langle H s,s \rangle  
+2\langle \widetilde W (g)-W g,\widetilde t \rangle-\langle \gamma t+g,n-\widetilde W(n)\rangle \\
&= &\langle \nabla_x \mathcal{L}(x,y),s \rangle + 2 \langle g,W\widetilde t - t \rangle 
+2 \langle \widetilde W (g) - W g,\widetilde t \rangle - \langle \gamma t+g,n-\widetilde W(n)\rangle \\
& &+\langle \nabla_x \mathcal{L}(x,y),s \rangle -\langle Ht,t \rangle + \langle H \widetilde t  , \widetilde t  \rangle + \langle H s,s \rangle \\
& &+\langle g,s \rangle -\langle Hn,s \rangle -\langle Ht,t \rangle + \langle H \widetilde t  , \widetilde t  \rangle + \langle H s,s \rangle \\
&= &\langle \nabla_x \mathcal{L}(x,y),s \rangle + 2 \langle g,W\widetilde t - t \rangle 
+2 \langle \widetilde W (g) - W g,\widetilde t \rangle - \langle \gamma t+g,n-\widetilde W(n)\rangle \\
& &+\langle g,s \rangle +\langle Ht,s \rangle -\langle Ht,t \rangle + \langle H \widetilde t  , \widetilde t  \rangle.
\end{eqnarray*}
Using $H={J^F}^\top J^F + \gamma I_d$, we obtain:
\begin{eqnarray*}
-2\pred^l &= &\langle \nabla_x \mathcal{L}(x,y),s \rangle + 2 \langle g,W\widetilde t - t \rangle 
+2 \langle \widetilde W (g) - W g,\widetilde t \rangle - \langle \gamma t+g,n-\widetilde W(n)\rangle \\
& &+\langle \gamma t+g,s \rangle +\langle {J^F}^\top J^F t,s \rangle -\langle {J^F}^\top J^F t,t \rangle 
- \langle \gamma t,t \rangle + \langle {J^F}^\top J^F \widetilde t  , \widetilde t  \rangle  
+ \langle \gamma \widetilde t,\widetilde t \rangle\\
&= &\langle \nabla_x \mathcal{L}(x,y),s \rangle + 2 \langle g,W\widetilde t - t \rangle 
+2 \langle \widetilde W (g) - W g,\widetilde t \rangle + \langle \gamma t+g,\widetilde W(n)\rangle \\
& &+\langle g,t \rangle +\langle {J^F}^\top J^F t,n \rangle+ \langle {J^F}^\top J^F \widetilde t  , \widetilde t  \rangle  + \langle \gamma \widetilde t  , \widetilde t  \rangle \\
&= &\langle \nabla_x \mathcal{L}(x,y),s \rangle +  2\langle g,W\widetilde t - t \rangle 
+ \langle \widetilde W (g) - W g,\widetilde t \rangle + \langle \gamma t+g,\widetilde W(n)-W n\rangle \\
& &\langle \gamma t+g,W n\rangle +\langle g,t \rangle -\langle W g,\widetilde t \rangle 
+ \langle \gamma \widetilde t+ \widetilde W(g),\widetilde t \rangle\\
& &+\langle {J^F}^\top J^F t,n \rangle +\langle {J^F}^\top J^F \widetilde t  , \widetilde t  \rangle\\
&= &\langle \nabla_x \mathcal{L}(x,y),s \rangle +  2\langle g,W\widetilde t - t \rangle 
+ \langle \widetilde W (g) - W g,\widetilde t \rangle + \langle \gamma t+g,\widetilde W(n)-W n\rangle \\
& &\langle W(\gamma t+g), n\rangle +\langle g,t- W \widetilde t \rangle 
+ \langle \gamma \widetilde t+ \widetilde W(g),\widetilde t \rangle
+\langle {J^F}^\top J^F t,n \rangle +\langle {J^F}^\top J^F \widetilde t  , \widetilde t  \rangle\\
&= &\langle \nabla_x \mathcal{L}(x,y),s \rangle +  \langle g,W\widetilde t - t \rangle 
+ \langle \widetilde W (g) - W g,\widetilde t \rangle + \langle \gamma t+g,\widetilde W(n)-W n\rangle \\
& &\langle W(\gamma t+g), n\rangle 
+ \langle \gamma \widetilde t+ \widetilde W(g),\widetilde t \rangle
+\langle {J^F}^\top J^F t,n \rangle +\langle {J^F}^\top J^F \widetilde t  , \widetilde t  \rangle,
\end{eqnarray*}
where the line before last is obtained using $W^\top = W$. As a result, 
\begin{eqnarray} 
|\ared^l - 2 \pred^l |
&= &|\mathcal{L}(x) - \mathcal{L}(x+s) - 2 \pred^l | \nonumber\\
&\le &|\mathcal{L}(x) + \langle \nabla_x \mathcal{L}(x,y),s \rangle - \mathcal{L}(x+s)| 
+ \|g\| \|W\widetilde t - t \| \nonumber \\
& &
+ \| \widetilde W (g) - W g\|\|\widetilde t \| + (\gamma \|t\|+\|g\|)\|\widetilde W(n)-W n\| 
\nonumber \\
& &+\|W(\gamma t+g)\| \|n\| + \|\gamma \widetilde t+ \widetilde W(g)\|\|\widetilde t\|.
+\|J^F\|^2 \left( \|t\|\|n\| + \| \widetilde t  \|^2 \right) \nonumber
\end{eqnarray}
By Assumptions~\ref{assum:smoothfphi} and~\ref{assum:yj}, we have
\begin{eqnarray} \label{eq:lipLagrangian}
	\left|\mathcal{L}(x,y) 
	+\langle \nabla_x \mathcal{L}(x,y),s \rangle -  \mathcal{L}(x+s,y) \right|
	&\le &\left| f(x+s)-f(x) -\nabla f(x)^\top s \right| \nonumber\\
	& &+ \|y\| \left\| C(x+s)-C(x)-{J^c}^\top s\right\| \nonumber\\
	\left|\mathcal{L}(x,y) 
	+\langle \nabla_x \mathcal{L}(x,y),s \rangle -  \mathcal{L}(x+s,y) \right|
	&\le &\frac{L^f+\kappa_y L^c}{2}\|s\|^2. 
\end{eqnarray} 
Therefore,
\begin{eqnarray} \label{eq:ared2pred}
|\ared^l - 2 \pred^l | &\le &\frac{L^f + \kappa_y L^c}{2}\|s\|^2+ \|g\| \|W\widetilde t - t \| \nonumber \\
& &+ \| \widetilde W (g) - W g\|\|\widetilde t \| + (\gamma \|t\|+\|g\|)\|\widetilde W(n)-W n\| 
\nonumber \\
& &+\|W(\gamma t+g)\| \|n\| + \|\gamma \widetilde t+ \widetilde W(g)\|\|\widetilde t\|.
+\|J^F\|^2 \left( \|t\|\|n\| + \| \widetilde t  \|^2 \right),
\end{eqnarray}
where the last line comes from~\eqref{eq:lipLagrangian}.
To conclude, we need the result below, that uses the properties $W^2=W=W^\top$ and $\|W\|=1$ of 
the projection matrix $W$. One has: 
\begin{eqnarray} \label{eq:Wgammatplusg}
\| W (\gamma t + g) \| &= &\|\gamma W(t - W\widetilde t) + W(\gamma \widetilde t+ \widetilde W (g)) +W(W g - \widetilde W (g))   \| \nonumber \\
&\le &\gamma\|t - W\widetilde t\|+\|\gamma \widetilde t+ \widetilde W (g)\|+\|W g - \widetilde W (g)\| 
\nonumber \\
\| W (\gamma t + g) \| &\le &\frac{\xi_2}{\gamma}+\frac{b_0}{\gamma}+\frac{\xi_1}{\gamma}.
\end{eqnarray}
where the last line uses~\eqref{eq:tminusWtildet}, Lemma~\ref{lem:usefulbounds} 
and~\eqref{eq:condWjapproxg}.

Using~\eqref{eq:Wgammatplusg} as well as~\eqref{eq:condWjapproxn},\eqref{eq:condWjapproxg},\eqref{eq:tminusWtildet}, the bounds from Lemma~\ref{lem:usefulbounds} and~\eqref{eq:lipLagrangian}, 
we can bound all the terms in~\eqref{eq:ared2pred} and we arrive at:
\begin{eqnarray*}
|\ared^l - 2 \pred^l |
&\le & \frac{(L^f+\kappa_y L^c)b_0^2}{2\gamma^2} + \frac{b_0 \xi_2}{\gamma^2} 
+\frac{b_0\xi_1}{\gamma^3} + \frac{2 b_0 \xi_0}{\gamma^2} \\ 
& &+\frac{2 b_0(\xi_2+b_0+\xi_1)}{\gamma^2} + \frac{b_0^2}{\gamma^2} 
+\frac{2{\kappa^F_J}^2 b_0^2}{\gamma^2}.
\end{eqnarray*}
Using $\gamma \ge \gamma_{\min}$, we obtain $|\ared^l - 2 \pred^l | \le \frac{b_2}{\gamma^2}$ 
with\\
$b_2 =b_0\left(\frac{(L^f+\kappa_y L^c + 6+4\kappa^F_J)b_0}{2} + 3 \xi_2 + \frac{\xi_1+2\gamma_{\min}}{\gamma_{\min}} + 2\xi_0\right)$.
\end{proof}

\subsection{Main convergence results}
\label{subsec:globcv:main}

We now present a global convergence analysis for our framework, that is 
inspired by the analysis of nonmonotone trust-region algorithms without 
penalty function~\cite{MUlbrich_SUlbrich_2003}.

We first establish that, if the method has not converged yet, 
Algorithm~\ref{alg:LMeq} eventually computes and accepts a step for a 
sufficiently large $\gamma_j$. This is the purpose of the 
next lemma, which is similar to~\cite[Lemma 1]{MUlbrich_SUlbrich_2003} but 
adapted to our inexact context.
\begin{lemma}\label{lem:welldefined}
Under Assumptions~\ref{assum:compactxj}-\ref{assum:yj}, let 
$\epsilon>0$, and suppose that the $j$-th 
iterate of Algorithm~\ref{alg:LMeq} is such that 
$\|C_j\|+\|\widetilde W_j (g_j)\|>2\epsilon$. Then, there exists 
$\bar{\gamma}>0$ (depending on $\epsilon$, $\|C_j\|$ and $a_{k_j}$) 
such that the step $s_j$ is accepted whenever $\gamma_j>\bar{\gamma}$. 
\end{lemma}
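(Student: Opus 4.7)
The plan is to establish that, for all sufficiently large $\gamma_j$, the acceptance tests in Step 5 of Algorithm \ref{alg:LMeq} succeed. Inspecting Step 5, both branches require $\rared^c_j \ge \rho_1 \pred^c_j$, while only the first branch additionally demands $\rared^l_j \ge \rho_1 \pred^l_j$. It therefore suffices to verify both of these inequalities for $\gamma_j$ large.

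The first key observation is that the maxima in the definitions of the relaxed reductions entail $\rared^c_j \ge \ared^c_j$ and $\rared^l_j \ge \ared^l_j$: indeed, $R_j \ge \|C_j\|^2$ by construction in Algorithm \ref{alg:R}, and $\mathcal{L}(x_j,y_j)$ explicitly enters the max defining $\rared^l_j$. Combining this with Lemma \ref{lem:boundactpred} yields
\[
\rared^c_j \;\ge\; 2\pred^c_j - \frac{b_1}{\gamma_j^2}, \qquad \rared^l_j \;\ge\; 2\pred^l_j - \frac{b_2}{\gamma_j^2}.
\]
Each acceptance inequality then reduces to showing that $(2-\rho_1)\pred^c_j$ (respectively $(2-\rho_1)\pred^l_j$) dominates $1/\gamma_j^2$ once $\gamma_j$ is sufficiently large.

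For the constraint test, \eqref{eq:decrease:c} gives $\pred^c_j \ge \kappa_1 \|C_j\|^2/((\kappa^c_J)^2+\gamma_j)$, which is of order $1/\gamma_j$ whenever $\|C_j\|>0$, with an implicit constant depending on $\|C_j\|$ (or on $\epsilon$ if $\|C_j\|>\epsilon$); the desired threshold on $\gamma_j$ is then easily extracted. The edge case $\|C_j\|=0$ is handled separately: the hypothesis forces $\|\widetilde W_j(g_j)\|>2\epsilon$, the first branch of Algorithm \ref{alg:R} activates and produces $R_j = \min\{a_{k_j}^2,\|\widetilde W_j(g_j)\|^2\}>0$, while $\|C(x_j+s_j)\| = O(1/\gamma_j)$ follows from Lipschitz continuity of $C$ together with $\|s_j\|\le b_0/\gamma_j$, so $\rared^c_j>0=\rho_1\pred^c_j$ for $\gamma_j$ large.

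For the Lagrangian test, needed only in the first acceptance branch, I exploit $\pred^l_j \ge \rho_2\pred^t_j$ combined with \eqref{eq:treq:inexact} and $\pred^t_j \ge \max\{\pred^c_j,(\pred^c_j)^\xi\}$ to build a lower bound. If $\|\widetilde W_j(g_j)\|>\epsilon$, then $\pred^l_j \gtrsim \epsilon^2/\gamma_j$ and the argument concludes as for $\pred^c_j$. The delicate subcase is $\|\widetilde W_j(g_j)\|\le \epsilon$ together with $\|C_j\|>\epsilon$: then $\pred^c_j$ is of order $\epsilon^2/\gamma_j$, so $\pred^c_j<1$ for large $\gamma_j$, whence $(\pred^c_j)^\xi\ge\pred^c_j$ and
\[
\pred^l_j \;\ge\; \rho_2 (\pred^c_j)^\xi \;\gtrsim\; \frac{\epsilon^{2\xi}}{\gamma_j^\xi}.
\]
Since $\xi<1$ gives $2-\xi>1$, this still dominates $b_2/\gamma_j^2$ for $\gamma_j$ sufficiently large, and the test succeeds. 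I expect this last subcase to be the main technical obstacle: it is precisely the exponent choice $\xi<1$, and the careful bookkeeping between the rates $\gamma_j^{-1}$, $\gamma_j^{-\xi}$ and $\gamma_j^{-2}$, that makes the argument close.
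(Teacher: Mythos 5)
Your reduction of acceptance to the two inequalities $\rared^c_j\ge\rho_1\pred^c_j$ and (in the first branch only) $\rared^l_j\ge\rho_1\pred^l_j$, via $\rared\ge\ared$ and Lemma~\ref{lem:boundactpred}, is the same skeleton as the paper's proof, and your Lagrangian-side argument is fine --- though the $\xi$-exponent detour you flag as ``the main technical obstacle'' is not needed: in the first branch $\pred^t_j\ge\max\{\pred^c_j,(\pred^c_j)^\xi\}\ge\pred^c_j$ always, so when $\|C_j\|>\epsilon$ you get $\pred^l_j\ge\rho_2\kappa_1\epsilon^2/((\kappa^c_J)^2+\gamma_j)$ directly, a rate of $\gamma_j^{-1}$ rather than $\gamma_j^{-\xi}$; this is exactly what the paper does. (Also, your claim ``$\pred^c_j$ is of order $\epsilon^2/\gamma_j$, so $\pred^c_j<1$'' infers an upper bound from a lower bound; the upper bound $\pred^c_j\le \kappa^c_J\kappa^\phi_g\|C_j\|/\gamma_j$ is true but has to come from~\eqref{eq:nbound2}, not from~\eqref{eq:decrease:c}.)

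The substantive issue is in the constraint test. You handle all of $\|C_j\|>0$ by the domination argument, which forces $\bar\gamma\gtrsim b_1/\big((2-\rho_1)\kappa_1\|C_j\|^2\big)$, and you reserve the nonmonotone mechanism of Algorithm~\ref{alg:R} for the single point $\|C_j\|=0$. Because the lemma's wording allows $\bar\gamma$ to depend on $\|C_j\|$, this does technically establish the statement for a fixed iteration $j$. But it misses the idea the lemma exists for: in the paper's Case~2.2, whenever $\|\widetilde W_j(g_j)\|>\epsilon$ and $\|C_j\|\le\epsilon_{k_j}:=\min(\alpha a_{k_j},\beta\epsilon)$ (small but typically \emph{nonzero}), Algorithm~\ref{alg:R} sets $R_j=\min(a_{k_j}^2,\|\widetilde W_j(g_j)\|^2)\ge 4\epsilon_{k_j}^2$, so that $\rared^c_j\ge\tfrac32\epsilon_{k_j}^2-O(1/\gamma_j)$ while $\pred^c_j\le\tfrac12\|C_j\|^2\le\tfrac12\epsilon_{k_j}^2$; the resulting threshold depends only on $\epsilon$ and $a_{k_j}$, not on $\|C_j\|$. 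Your threshold instead blows up like $\|C_j\|^{-2}$ precisely in this regime, which is the regime of Theorem~\ref{theo:glimit} ($\|\widetilde W_j(g_j)\|\ge\epsilon$ while $\|C_j\|\to0$), where the lemma is invoked to conclude that $\{\gamma_j\}$ stays bounded. With your version of the proof that downstream conclusion no longer follows. So: the statement as written is arguably proved, but the proof discards the uniformity in $\|C_j\|$ that the nonmonotone relaxation is designed to provide, and which the paper's convergence analysis actually uses.
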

\begin{proof}
Since $\|C_j\|+\|\widetilde W_j (g_j)\|>2\epsilon$, one of the two 
quantities $\|C_j\|$ and $\|\widetilde W_j (g_j)\|$ must be larger than 
$\epsilon$. We thus consider two cases.

\emph{Case 1}: Suppose that $\|C_j\| > \epsilon$. 
By combining~\eqref{eq:decrease:c} and~\eqref{eq:boundsFC1}, we then have that
$
\pred^c_j  \ge \kappa_1\frac{\epsilon^2}{{\kappa^c_J}^2+\gamma_j},
$
while Lemma~\ref{lem:boundactpred} guarantees that
$|\ared^c_j-2\pred^c_j|\le \frac{b_1}{\gamma_j^2}$. Hence, 
\begin{eqnarray*}
\left|2-\frac{\ared^c_j}{\pred^c_j}\right|
&=& \left|\frac{2\pred^c_j}{\pred^c_j}-\frac{\ared^c_j}{\pred^c_j}\right| 
\le  \frac{b_1}{\kappa_1 \epsilon^2} \frac{{\kappa_J^c}^2+\gamma_j}{\gamma_j^2} 
\rightarrow 0\mbox{~~~~~as~~} \gamma_j\to \infty.
\end{eqnarray*} 
Thus there exists $\bar{\gamma}_1 >0$ such that if 
$\gamma_j\ge \bar{\gamma}_1>0$, then 
$\rared^c_j\ge \ared^c_j \ge \rho_1 \pred^c_j$. If either 
$\pred^t_j < \max\{\pred^c_j,(\pred^c_j)^\xi\}$ or 
$\pred^l_j < \rho_2\pred^t_j$, we know that the step will be accepted. 
Otherwise (i.e. $\pred^t_j\ge \max\{\pred^c_j,(\pred^c_j)^\xi\}$
and $\pred^l_j\ge \rho_2\pred^t_j$): 
\begin{equation*}
	\pred^l_j \ge \rho_2 \pred^t_j \ge \rho_2 \pred^c_j \ge 
	\rho_2 \kappa_1 \frac{\eps^2}{{\kappa^c_J}^2+\gamma_j}.
\end{equation*}
Since by Lemma~\ref{lem:boundactpred}, 
$|\ared^l_j-2 \pred^l_j|\le \frac{b_2}{\gamma_j^2}$, we can use the same 
argument than above to show that there exists $\bar{\gamma}_2$ such that 
$\rared_j^l \ge \ared^l_j \ge \rho_1 \pred^l_j$ for 
$\gamma_j \ge \bar{\gamma}_2$, and thus the step is accepted.

\emph{Case 2}: Suppose now that $\|\widetilde W_j (g_j)\|> \epsilon$. 
By~\eqref{eq:treq:inexact} and~\eqref{eq:boundsFC1}, this implies that 
$\pred^t_j\ge \kappa_2 \frac{\epsilon^2}{{\kappa^F_J}^2+\gamma_j}$. We 
consider two subcases.\\
\emph{Case 2.1:} If $\pred^c_j \ge \pred^t_j$, we have 
$\pred^c_j\ge \kappa_2 \frac{\epsilon^2}{{\kappa^F_J}^2+\gamma_j}$ 
and the same argument than in Case 1 can be employed to guarantee that 
the step is accepted for $\gamma_j\ge \bar{\gamma}_3$ for a certain 
$\bar{\gamma}_3>0$.\\
\emph{Case 2.2:} If $\pred^c_j<\pred^t_j$, then the only condition 
required for step acceptance is that $\rared^c_j \ge \rho_1 \pred^c_j$. 
Defining $\epsilon_{k_j}=\min(\alpha a_{k_j},\beta \epsilon)$ (see 
Algorithm~\ref{alg:R}, we then compare $\epsilon_{k,j}$ and $\|C_j\|$.
If $\|C_j\|>\epsilon_{k_j}$, the reasoning of Case 1 (with $\epsilon_{k,j}$
playing the role of $\epsilon$) guarantees that there exists $\bar{\gamma}_4>0$ 
such that the step is accepted when $\gamma_j > \bar{\gamma}_4$.
On the other hand, if $\|C_j\| \le \epsilon_{k_j}$, we have 
$R_j \ge \min(a_{k_j}^2,\epsilon^2) \ge 4 \epsilon_{k_j}^2$, which 
then gives:
\begin{eqnarray*}
	\rared^c_j \ge \frac{1}{2}R_j-\frac{1}{2}\|C(x_j+s_j)\|^2 
	&\ge &\frac{1}{2}R_j - \frac{1}{2}\|C(x_j)\|^2 -{J^c_j}^\top s_j 
	- \frac{L^c}{2}\|s_j\|^2 \\
	&\ge &\frac{3}{2}\epsilon_{k,j}^2 - \frac{\kappa^c_J b_0}{\gamma_j} 
	- \frac{L^c b_0^2}{2 \gamma_j^2},
\end{eqnarray*}
where the last inequality comes from~\eqref{eq:boundsFC1} and 
Lemma~\ref{lem:usefulbounds}. Thus there exists $\bar{\gamma}_5>0$ 
such that $\rared^c_j \ge \rho_1\epsilon_{k_j}^2$ for $\gamma_j > \bar{\gamma}_5$: since 
$\pred^c_j \le \frac{1}{2} \|C_j\|^2 \le \frac{1}{2} \epsilon_{k_j}^2$ 
by definition, we then have $\rared^c_j \ge \rho_1 \pred^c_j$, and the 
step is accepted. 

Letting $\bar{\gamma}=\max\{\bar{\gamma}_1,\bar{\gamma}_2,
\bar{\gamma}_3,\bar{\gamma}_4,\bar{\gamma}_5\}$ finally leads to the desired 
result.
\end{proof}
The remainder of our analysis relies on several arguments that are identical to the 
trust-region setting, which we restate below (see~\cite[Lemmas 2-5]{MUlbrich_SUlbrich_2003} 
for proofs). The analysis relies on considering the steps that have been accepted: for 
this purpose, the subscript $_{j,a}$ will refer to quantities related to the $j$-th iteration 
at which the step was accepted (e.g. $s_{j,a}$ denotes the accepted step at iteration $j$).
\begin{lemma}\label{lem:decreasenonomonotone}
Under Assumptions~\ref{assum:compactxj}-\ref{assum:yj}, 
suppose that there exists $\hat j\ge 0$ such that for all $j\ge \hat j$, we have
$
\rared^c_{j,a}=\max\left\{\|C_j\|^2, \sum^{\nu^c_j -1}_{k=0} \mu^c_{jk} \|C_{j-k}\|^2 \right \} - \|C_{j+1}\|^2.
$
Then, for all $j\ge \hat j$,
$
\|C_{j+1}\|^2 \le \max_{ \hat j-\nu^c_j<l\le  \hat j} R_l-\rho_1\sum_{r= \hat j}^j \mu^{\min\{j-r,\nu\}} \pred^c_{r,a}.
$
\end{lemma}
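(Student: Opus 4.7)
The plan is to argue by strong induction on $j \ge \hat{j}$, with the engine being a one-step recursion. The step-acceptance test in Algorithm~\ref{alg:LMeq} guarantees that $\rared^c_{j,a} \ge \rho_1 \pred^c_{j,a}$ at every accepted iteration; combining this with the identity assumed to hold for $\rared^c_{j,a}$ when $j \ge \hat{j}$ and rearranging yields
\begin{equation*}
\|C_{j+1}\|^2 \;\le\; \max\!\left\{\|C_j\|^2,\; \sum_{k=0}^{\nu^c_j - 1} \mu^c_{jk}\|C_{j-k}\|^2\right\} \;-\; \rho_1\,\pred^c_{j,a},
\end{equation*}
which is the driving inequality I would use at each step of the induction.

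For the base case $j = \hat{j}$, I would bound each $\|C_l\|^2$ appearing in the max by the corresponding $R_l$, using the invariant $R_l \ge \|C_l\|^2$ enforced by Algorithm~\ref{alg:R}, and then use $\sum_k \mu^c_{\hat{j}k} = 1$ to collapse the convex combination into $\max_{\hat{j}-\nu^c_{\hat{j}} < l \le \hat{j}} R_l$. This yields exactly the claimed estimate with a single term ($r = \hat{j}$) in the sum.

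For the inductive step, assuming the desired inequality holds for all indices in $[\hat{j}, j]$, I would split the outer max at iteration $j+1$ according to which term attains it. If $\|C_{j+1}\|^2$ dominates, the induction hypothesis plugs in directly and the newly accepted $\pred^c_{j+1,a}$ inherits the coefficient $\mu^0 = 1$ required by the statement. If instead the weighted sum dominates, I would substitute the induction hypothesis into every $\|C_l\|^2$ with $l > \hat{j}$, and the bound $\|C_l\|^2 \le R_l$ into those with $l \le \hat{j}$, and then exchange the order of summation to read off the coefficient accumulated by each $\pred^c_{r,a}$.

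The main obstacle is the bookkeeping required to verify that every such accumulated coefficient is at least $\mu^{\min\{j+1-r,\nu\}}$. Since $\mu^c_{j+1,k} \ge \mu$ and $\mu \in (0,1)$, the product of an outer weight with the inductive coefficient $\mu^{\min\{j-k-r,\nu\}}$ is at least $\mu^{\min\{j-k-r,\nu\}+1}$, and the required comparison $\mu^{\min\{j-k-r,\nu\}+1} \ge \mu^{\min\{j+1-r,\nu\}}$ must be checked in the two regimes $j+1-r \le \nu$ and $j+1-r > \nu$, using the monotonicity $\mu^a \ge \mu^b$ when $a \le b$. One also has to verify that the index set $\{l:\hat{j}-\nu^c_{j+1}< l \le \hat{j}\}$ still covers all the surviving $R_l$'s after the substitution, which follows from $\nu^c_{j+1} \le \nu$ and the fact that only indices $l \le \hat{j}$ contribute $R_l$ terms. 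Once these two checks are in place, gathering the $\pred^c_{r,a}$ terms completes the induction.
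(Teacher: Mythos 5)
The paper itself does not prove this lemma: it is imported verbatim from Ulbrich and Ulbrich (Lemmas 2--5 of that reference), so your reconstruction is being compared against the standard argument there rather than against anything in the text. Your overall plan is that argument: the driving one-step inequality $\|C_{j+1}\|^2 \le \max\{\|C_j\|^2,\sum_k \mu^c_{jk}\|C_{j-k}\|^2\} - \rho_1\pred^c_{j,a}$ (from the acceptance test plus the hypothesis), the base case via $\|C_l\|^2\le R_l$ and $\sum_k\mu^c_{jk}=1$, and a strong induction that splits on which term attains the max and exchanges the order of summation. That structure is correct, and you rightly identify the two bookkeeping obligations (coefficient accumulation and coverage of the $R_l$ index set); the latter goes through since $\nu^c_{j-k}\le\nu^c_{j+1}$.

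The one step that fails as literally written is the coefficient comparison. You propose to verify, for each fixed $k$, that $\mu^c_{j+1,k}\,\mu^{\min\{j-k-r,\nu\}}\ge \mu^{\min\{j-k-r,\nu\}+1}\ge \mu^{\min\{j+1-r,\nu\}}$. In the saturated regime $j-r\ge\nu$ both minima equal $\nu$ and the second inequality reads $\mu^{\nu+1}\ge\mu^{\nu}$, which is false since $\mu\in(0,1)$; moreover, when $j-r$ is much larger than $\nu$, \emph{no} single index $k\le\nu^c_{j+1}-1$ makes the per-term bound work. The correct closing move in that regime is to use the \emph{total} accumulated weight rather than a single term: since every exponent satisfies $\min\{j-k-r,\nu\}\le\nu$ and the outer weights are normalized,
\begin{equation*}
\sum_{k}\mu^c_{j+1,k}\,\mu^{\min\{j-k-r,\nu\}} \;\ge\; \mu^{\nu}\sum_{k}\mu^c_{j+1,k} \;=\; \mu^{\nu} \;=\; \mu^{\min\{j+1-r,\nu\}},
\end{equation*}
while your single-term ($k=0$) argument is reserved for the regime $j+1-r\le\nu$, where it gives exactly $\mu^{j+1-r}$. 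This is precisely why the exponent in the statement is capped at $\nu$ and why the normalization $\sum_k\mu^c_{jk}=1$ is an assumption of the scheme. You should also record explicitly that $\pred^c_{r,a}\ge 0$ (guaranteed by~\eqref{eq:decrease:c}), since weakening a coefficient in front of $-\rho_1\pred^c_{r,a}$ is only legitimate when the predicted reductions are nonnegative. With those two repairs the induction closes.
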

\begin{lemma}\label{lem:decreasenonomonotone2}
Under Assumptions~\ref{assum:compactxj} to~\ref{assum:yj}, 
suppose that there exists $\hat j\ge 0$ such that, for all $j\ge \hat j$, we have
$
\rared^l_{j,a}+\mathcal{L}(x_{j+1},y_j)-\mathcal{L}(x_{j+1},y_{j+1})\ge \frac{\rho_1}{2}\pred^l_{j,a}
$.\\
Then, for all $j\ge \hat j$,
$$
|\mathcal{L}(x_{j+1},y_{j+1})|\le \max_{ \hat j-\nu^l_j<l\le  \hat j} \mathcal{L}(x_{l},y_{l})-\frac{\rho_1}{2}\sum_{r= \hat j}^j\mu^{\min(j-r,\nu)}\pred^l_{r,a}.
$$
\end{lemma}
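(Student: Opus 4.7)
The plan is to proceed by strong induction on $j \ge \hat j$, following the standard template for nonmonotone trust-region analysis. Before starting the induction, I would combine the hypothesis
$$
\rared^l_{j,a}+\mathcal{L}(x_{j+1},y_j)-\mathcal{L}(x_{j+1},y_{j+1})\ \ge\ \frac{\rho_1}{2}\pred^l_{j,a}
$$
with the definition of $\rared^l_{j,a}$ to obtain the key one-step inequality
$$
\mathcal{L}(x_{j+1},y_{j+1})\ \le\ \max\!\left\{\mathcal{L}(x_j,y_j),\ \sum_{k=0}^{\nu^l_j-1}\mu^l_{jk}\,\mathcal{L}(x_{j-k},y_{j-k})\right\} \;-\;\frac{\rho_1}{2}\pred^l_{j,a}.
$$
Because $\sum_k \mu^l_{jk}=1$ with $\mu^l_{jk}\ge \mu>0$, the weighted sum is a convex combination of the last $\nu^l_j$ values of $\mathcal{L}$, which gives us either a maximum or a convex-combination form to feed into the induction.

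For the base case $j=\hat j$, each $\mathcal{L}(x_{\hat j-k},y_{\hat j-k})$ appearing in the convex combination has index $\hat j-k\le \hat j$, so the corresponding term is already bounded by $\max_{\hat j-\nu^l_{\hat j}<l\le \hat j}\mathcal{L}(x_l,y_l)$; since $\nu^l_j$ is nondecreasing, this is no larger than $\max_{\hat j-\nu^l_j<l\le \hat j}\mathcal{L}(x_l,y_l)$. The remaining term $-\tfrac{\rho_1}{2}\pred^l_{\hat j,a}$ matches the claim because $\mu^{\min(\hat j-\hat j,\nu)}=\mu^0=1$.

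For the inductive step, assuming the claim for every index between $\hat j$ and $j$, I would apply the one-step inequality at $j+1$ and then substitute the inductive hypothesis into each $\mathcal{L}(x_{j+1-k},y_{j+1-k})$ with $j+1-k>\hat j$, while for $j+1-k\le \hat j$ I would just use the trivial bound by $\max_{\hat j-\nu^l_{j+1}<l\le \hat j}\mathcal{L}(x_l,y_l)$ (verifying that $k<\nu^l_{j+1}$ indeed forces $j+1-k>\hat j-\nu^l_{j+1}$). Each substitution contributes a weighted sum of earlier predicted reductions, and the task is to check that collecting them yields at least $\tfrac{\rho_1}{2}\sum_{r=\hat j}^{j+1}\mu^{\min(j+1-r,\nu)}\pred^l_{r,a}$.

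The main obstacle, as is typical for these nonmonotone analyses, is the combinatorial bookkeeping of the weights on $\pred^l_{r,a}$. For each $r\in[\hat j,j]$, the term $\pred^l_{r,a}$ is inherited through every branch of the induction that passes through iteration $r+1$, and the weight it carries is a product of at most $\min(j+1-r,\nu)$ factors $\mu^l_{\ast,\ast}$, each bounded below by $\mu$. I would therefore use $\mu^l_{jk}\ge \mu$ together with the monotonicity $\mu^{\min(j-r,\nu)}\ge \mu^{\min(j+1-r,\nu)}$ to collapse these products into the single lower bound $\mu^{\min(j+1-r,\nu)}$, and finally use $\nu^l_{\cdot}\le\nu^l_{j+1}$ to absorb all the auxiliary maxima into $\max_{\hat j-\nu^l_{j+1}<l\le \hat j}\mathcal{L}(x_l,y_l)$. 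Once these exponent and window inequalities are verified, the inductive step closes and the proof is complete.
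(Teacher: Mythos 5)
Your overall plan is the right one, and it is essentially the argument that the paper relies on: the paper gives no proof of this lemma at all, restating it from the trust-region setting and deferring to Lemmas 2--5 of Ulbrich and Ulbrich for the proofs. Your derivation of the one-step inequality from the definition of $\rared^l_{j,a}$ (using $x_j+s_j=x_{j+1}$ for an accepted step) is correct, and your base case is fine.

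However, the step you defer to ``verification'' --- collapsing the inherited weights into $\mu^{\min(j+1-r,\nu)}$ --- does not follow from the per-branch description you give, and it is the one genuinely delicate point of the proof. If, in the inductive step, the maximum is attained by the convex combination and you track $\pred^l_{r,a}$ along a single chain of substitutions (say always through $k=0$), it accumulates one factor $\mu^l_{\ast,0}\ge\mu$ per iteration, i.e.\ a weight of order $\mu^{j+1-r}$; once $j+1-r>\nu$ this is \emph{smaller} than the target $\mu^{\min(j+1-r,\nu)}=\mu^{\nu}$, so bounding each factor below by $\mu$ and ``collapsing the product'' cannot close the induction. The correct bookkeeping sums the contributions of $\pred^l_{r,a}$ over \emph{all} branches $k=0,\dots,\nu^l_{j+1}-1$ of the convex combination: when $j-r\ge\nu$, every branch contributes at least $\mu^l_{(j+1)k}\mu^{\nu}$ and $\sum_k\mu^l_{(j+1)k}=1$ then yields exactly the floor $\mu^{\nu}$, while for $j+1-r\le\nu$ the $k=0$ branch alone already gives $\mu^{j+1-r}$. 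Two smaller points: all of these manipulations (including $\mu^{\min(j-r,\nu)}\ge\mu^{\min(j+1-r,\nu)}$ in the branch where the max equals $\mathcal{L}(x_{j+1},y_{j+1})$) silently require $\pred^l_{r,a}\ge 0$, which holds in the regime where the lemma is invoked but should be stated; and your argument bounds $\mathcal{L}(x_{j+1},y_{j+1})$ from above, not $|\mathcal{L}(x_{j+1},y_{j+1})|$ --- the absolute value in the statement appears to be an artifact carried over from the constraint-violation version, and only the upper bound is used downstream.
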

\begin{lemma}\label{lem:cakjbound}
Let Assumptions~\ref{assum:compactxj}-\ref{assum:yj} hold. If, at the $j$-th iteration of Algorithm~\ref{alg:LMeq}, Algorithm~\ref{alg:R} performs the 
update $k_{j+1}=k_j+1$, then
$
\|C_{j'}\|\le \frac{1}{\sqrt{\mu}}a_{k_j}
$
for all $j' \ge j$. 
In particular, for all iterations $j$ with $k_j\ge 1$, one has
$
\|C_j\|\le \frac{a_{k_j}}{\sqrt{\mu}\alpha_0}.
$
\end{lemma}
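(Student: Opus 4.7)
The plan is to establish the two assertions in sequence. First I would prove that whenever iteration $j$ triggers the increment $k_{j+1}=k_j+1$, the bound $\|C_{j'}\|\le a_{k_j}/\sqrt{\mu}$ propagates forward to every $j'\ge j$. Second I would deduce the sharper estimate $\|C_j\|\le a_{k_j}/(\sqrt{\mu}\,\alpha_0)$ for any iteration $j$ with $k_j\ge 1$ by relating $j$ to the most recent increment time.

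For the first part, the key observation is that the increment $k_{j+1}=k_j+1$ can only happen inside the ``if'' branch of Algorithm~\ref{alg:R}, which forces
\[
    R_j \;\ge\; \sum_{k=0}^{\nu^c_j-1}\mu^c_{jk}\|C_{j-k}\|^2 \quad\text{and}\quad R_j\le a_{k_j}^2.
\]
Since $\mu^c_{jk}\ge\mu$, this immediately yields $\|C_i\|\le a_{k_j}/\sqrt{\mu}$ for each index $i=j+1-\nu^c_j,\dots,j$, providing the base case. The induction step uses the fact that each outer iteration of Algorithm~\ref{alg:LMeq} ultimately accepts the step, so that $\rared^c_{j'-1}\ge \rho_1\pred^c_{j'-1}\ge 0$ gives
\[
    \|C_{j'}\|^2 \;\le\; \max\!\left\{R_{j'-1},\, \sum_{k=0}^{\nu^c_{j'-1}-1}\mu^c_{(j'-1)k}\|C_{j'-1-k}\|^2\right\}.
\]
The convex combination on the right is bounded by $a_{k_j}^2/\mu$ via the induction hypothesis (all the indices appearing in the sum lie in the range already covered, using $\nu^c_{j'-1}\le \nu$). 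For $R_{j'-1}$, a short case split on the branches of Algorithm~\ref{alg:R} suffices: in the ``else'' branch $R_{j'-1}=\|C_{j'-1}\|^2$ is handled by the induction hypothesis, while in the ``if'' branch $R_{j'-1}\le a_{k_{j'-1}}^2\le a_{k_j}^2$, using that $\{k_{j''}\}$ is nondecreasing and $k_{j'-1}\ge k_{j+1}=k_j+1$ once $j'\ge j+2$. Combining the two terms yields $\|C_{j'}\|\le a_{k_j}/\sqrt{\mu}$.

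For the second part, given $j$ with $k_j\ge 1$, let $j^*< j$ be the largest index at which Algorithm~\ref{alg:R} performed an increment $k_{j^*+1}=k_{j^*}+1$ (such $j^*$ exists because $k_0=0$). Since $k_\cdot$ only moves at such indices, $k_{j^*+1}=k_j$, hence $k_{j^*}=k_j-1$. Applying the first part at $j^*$ gives $\|C_j\|\le a_{k_{j^*}}/\sqrt{\mu}=a_{k_j-1}/\sqrt{\mu}$, and the sequence assumption $a_{k+1}/a_k\ge \alpha_0$ rearranges into $a_{k_j-1}\le a_{k_j}/\alpha_0$, yielding the claimed bound.

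The main obstacle is the bookkeeping inside the induction: one has to verify that every index $j'-1-k$ appearing in the convex combination at iteration $j'-1$ is already bounded, either by the base case (indices in $\{\max(0,j+1-\nu^c_j),\dots,j\}$) or by the induction hypothesis (indices in $\{j+1,\dots,j'-1\}$). The identity $\nu^c_{j'-1}=\min(j',\nu)$ together with $\nu^c_j=\min(j+1,\nu)$ makes the two ranges overlap contiguously, but this is the one point that requires care.
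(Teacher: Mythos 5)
Your argument is correct and is essentially the same as the one the paper relies on: the paper does not prove this lemma itself but defers to Ulbrich--Ulbrich (Lemmas 2--5), where the proof proceeds exactly as you do --- the increment condition $R_j\ge\sum_k\mu^c_{jk}\|C_{j-k}\|^2$ together with $R_j\le a_{k_j}^2$ gives the base case, the acceptance test $\rared^c_{j'}\ge\rho_1\pred^c_{j'}\ge 0$ drives the forward induction (with the monotonicity of $\{k_{j'}\}$ and of $\{a_k\}$ handling the $R_{j'-1}$ term), and $a_{k+1}\ge\alpha_0 a_k$ yields the second claim. I see no gaps.
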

\begin{lemma}\label{lem:lem5ulb}
Let Assumptions~\ref{assum:compactxj}-\ref{assum:yj} hold. If for infinitely many iterations,
\[
\rared^c_{j,a}\neq \max\left\{\|C_j\|^2,\sum_{r=0}^{v^c_j-1}\mu^c_{jr}\|C_{j-r}\|^2\right\}-\|C_{j+1}\|^2
\]
holds, then $k_j\to\infty$ and $\|C_j\|\to 0$.
\end{lemma}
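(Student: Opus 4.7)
The plan is to reduce this statement to Lemma~\ref{lem:cakjbound} via a case analysis of Algorithm~\ref{alg:R}. The essential observation is that the hypothesis of the lemma is in fact equivalent to saying that the counter $k_j$ has just been incremented by Algorithm~\ref{alg:R} (i.e.\ $k_{j+1} = k_j + 1$), so that infinitely many such iterations force $k_j \to \infty$, after which $\|C_j\|\to 0$ follows from the relation between $\|C_j\|$ and $a_{k_j}$ already provided by Lemma~\ref{lem:cakjbound}.

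First, I would analyze the two branches of Algorithm~\ref{alg:R}. If the \emph{else} branch is executed (line~8), then $R_j = \|C_j\|^2$, so the quantity defining $\rared^c_{j,a}$ already has $R_j$ replaced by $\|C_j\|^2$ inside its $\max$, and the hypothesis of the lemma cannot hold. If the \emph{if} branch is executed (line~1), then by the guarding test $\|C_j\| < \min\{\alpha a_{k_j},\beta\|\hat g_j\|\}$ together with $\alpha,\beta<1/2$, one gets $\|C_j\|^2 < R_j = \min\{a_{k_j}^2,\|\hat g_j\|^2\}$. In this branch, if moreover $R_j < \sum_{r=0}^{\nu^c_j-1}\mu^c_{jr}\|C_{j-r}\|^2$, the maximum inside $\rared^c_{j,a}$ is attained at the sum, which also equals $\max\{\|C_j\|^2,\sum\mu^c_{jr}\|C_{j-r}\|^2\}$ since $\|C_j\|^2 < R_j$ is dominated too; thus the hypothesis again fails. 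The \emph{only} remaining configuration, namely the \emph{if} branch with $R_j \ge \sum \mu^c_{jr}\|C_{j-r}\|^2$, is exactly the condition under which Algorithm~\ref{alg:R} executes the update $k_{j+1} = k_j + 1$.

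Second, since the hypothesis holds at infinitely many iterations, by the above equivalence the update $k_{j+1}=k_j+1$ occurs at infinitely many iterations. The sequence $\{k_j\}$ is nondecreasing (inspection of Algorithm~\ref{alg:R} shows that $k_{j+1}$ is either $k_j$ or $k_j+1$), so this immediately yields $k_j \to \infty$.

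Finally, I would invoke Lemma~\ref{lem:cakjbound}: for every iteration index $j$ with $k_j \ge 1$ one has $\|C_j\| \le a_{k_j}/(\sqrt{\mu}\alpha_0)$. Because $\{a_k\}$ was chosen (in Section~\ref{subsec:algo:nonmonotone}) to satisfy $a_k \to 0$, and because $k_j \to \infty$, we conclude $\|C_j\| \to 0$, which completes the proof. The only real obstacle is step one, i.e.\ carefully reading off Algorithm~\ref{alg:R} to confirm that a strict mismatch between the two $\max$-expressions in the hypothesis is possible only in the precise configuration that triggers the counter increment; once that equivalence is in hand, the argument is a direct chain of implications using Lemma~\ref{lem:cakjbound} and the decay assumption on $\{a_k\}$.
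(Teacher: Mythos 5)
Your proof is correct and follows the intended argument (the paper defers this proof to Ulbrich--Ulbrich, and your reduction to the counter increment plus Lemma~\ref{lem:cakjbound} is exactly that argument). One minor imprecision: the hypothesis is not quite \emph{equivalent} to $k_{j+1}=k_j+1$ (in the degenerate case $R_j=\sum_r \mu^c_{jr}\|C_{j-r}\|^2$ the counter is incremented but the two $\max$-expressions coincide), yet the implication you actually use --- mismatch $\Rightarrow$ increment --- is valid, so the proof stands.
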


We now have all the ingredients to establish global 
convergence of our framework. We begin by showing that the 
sequence of iterates is asymptotically feasible.
\begin{theorem}\label{theo:feaslimit}
Under Assumptions~\ref{assum:compactxj}-\ref{assum:yj}, 
if Algorithm \ref{alg:LMeq} does not terminate finitely, then
$\lim_{j\to \infty} \|C_j\|=0.$
\end{theorem}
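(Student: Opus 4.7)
The plan is to split the argument using the dichotomy embodied in Lemma~\ref{lem:lem5ulb}. First, I would consider the case where, for infinitely many accepted iterations, the equality
$\rared^c_{j,a}=\max\{\|C_j\|^2,\sum_{r=0}^{\nu^c_j-1}\mu^c_{jr}\|C_{j-r}\|^2\}-\|C_{j+1}\|^2$
fails. In that case, Lemma~\ref{lem:lem5ulb} directly delivers $\|C_j\|\to 0$ and we are done. Otherwise, there exists an index $\hat j$ such that the equality holds for every accepted iteration $j\ge \hat j$, which is precisely the hypothesis that makes Lemma~\ref{lem:decreasenonomonotone} applicable and provides the driving quantitative estimate.

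I would then argue by contradiction, assuming the existence of $\eps>0$ and an infinite subsequence $\{j_i\}$ of accepted iterations with $\|C_{j_i}\|>\eps$. For each such iteration, Lemma~\ref{lem:welldefined} supplies a threshold $\bar\gamma$ on the regularization parameter above which any step must be accepted; inspecting Case 1 of that lemma, the bound depends only on $\eps$ (and the fixed constants of the method), not on $a_{k_j}$. Combined with the multiplicative update rule of Algorithm~\ref{alg:LMeq} -- in which $\gamma_j$ is multiplied by $\hat\gamma_2$ until acceptance and then shrunk by $\hat\gamma_1<1$ -- this shows that $\gamma_{j_i}\le \gamma_{\max}$ uniformly in $i$, for some $\gamma_{\max}>0$. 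The Cauchy-decrease condition~\eqref{eq:decrease:c} together with~\eqref{eq:boundsFC1} then produces a uniform positive lower bound
$\pred^c_{j_i,a}\ge \kappa_1\eps^2/({\kappa^c_J}^2+\gamma_{\max})=:c>0$.

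Finally, I would apply Lemma~\ref{lem:decreasenonomonotone} to obtain, for every $j\ge \hat j$,
$$
0\le \|C_{j+1}\|^2 \le \max_{\hat j-\nu^c_j<l\le \hat j} R_l - \rho_1\sum_{r=\hat j}^{j}\mu^{\min(j-r,\nu)}\pred^c_{r,a}.
$$
The left-hand side is non-negative while the leading term on the right is a fixed constant (independent of $j$), so the partial sums of $\mu^{\min(j-r,\nu)}\pred^c_{r,a}$ remain bounded as $j\to\infty$. Since $\mu^{\min(j-r,\nu)}=\mu^\nu$ whenever $r\le j-\nu$, this boundedness forces the series $\sum_{r\ge \hat j}\pred^c_{r,a}$ to converge, hence $\pred^c_{r,a}\to 0$. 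This contradicts the uniform positive lower bound $c$ available along the infinite subsequence $\{j_i\}$, completing the argument.

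The main obstacle I anticipate is verifying that the threshold $\bar\gamma$ from Lemma~\ref{lem:welldefined} can be chosen uniformly along the subsequence where $\|C_j\|>\eps$: one must track which case of that lemma is invoked, and in particular rule out any dependence on $a_{k_j}$, which only enters through Case~2.2 corresponding to small $\|C_j\|$ and hence is irrelevant for the selected subsequence. Once this uniform bound on $\gamma_{j_i}$ is secured, the passage from Lemma~\ref{lem:decreasenonomonotone} to a summability statement on $\{\pred^c_{r,a}\}$, and thence to the contradiction, is essentially a routine telescoping argument exploiting the stationary tail weight $\mu^\nu$.
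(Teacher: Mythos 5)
There is a genuine gap at the point you yourself flag as the ``main obstacle'': the claimed uniform bound $\gamma_{j_i}\le\gamma_{\max}$ along the subsequence where $\|C_{j_i}\|>\eps$ does not follow from Lemma~\ref{lem:welldefined}. The regularization parameter entering iteration $j_i$ is inherited from the iterations \emph{between} $j_{i-1}$ and $j_i$, and at those intermediate iterations $\|C_j\|$ and $\|\widetilde W_j(g_j)\|$ may be small, so the acceptance threshold $\bar\gamma$ there can be arbitrarily large (in particular, Case~2.2 with its dependence on $a_{k_j}\to 0$ is very much in play at those iterations, even though it is not at the iterations $j_i$). Consequently $\gamma$ may be driven up without bound before the intermediate steps are accepted, and the value $\gamma_{j_i,a}$ at which the step is accepted at iteration $j_i$ — while guaranteeing acceptance — can be arbitrarily large. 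Your lower bound $\pred^c_{j_i,a}\ge \kappa_1\eps^2/({\kappa^c_J}^2+\gamma_{\max})$ therefore does not hold with a fixed $\gamma_{\max}$, and the contradiction with $\pred^c_{r,a}\to 0$ evaporates. (Indeed, $\gamma_{j,a}\to\infty$ is entirely consistent with the summability of $\pred^c_{r,a}$; the paper's own proof derives exactly this divergence in its first case.)

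The paper closes this gap with a two-stage argument that you would need to reproduce. First, it shows $\liminf_j\|C_j\|=0$: if $\|C_j\|\ge\eps$ for \emph{all} large $j$, then the Case~1 threshold of Lemma~\ref{lem:welldefined} is uniform over the whole tail, which \emph{does} bound $\gamma_{j,a}$, and this contradicts the divergence $\gamma_{j,a}\to\infty$ forced by Lemma~\ref{lem:decreasenonomonotone} and~\eqref{eq:decrease:c}. Second, to upgrade $\liminf$ to $\lim$, it considers a stretch of iterations $\underline j,\dots,\overline j$ on which $\|C_j\|\ge\eps$ while $\|C_{\underline j}\|\ge 2\eps$ and $\|C_{\overline j+1}\|<\eps$, and bounds the total displacement $\|x_{\overline j+1}-x_{\underline j}\|$ by a constant multiple of $\sum_{j=\underline j}^{\overline j}\pred^c_{j,a}$, using $\|s_{j,a}\|\le b_0/\gamma_{j,a}$ together with $\pred^c_{j,a}\ge\kappa_1\eps^2/({\kappa^c_J}^2+\gamma_{j,a})$; this displacement tends to zero, contradicting the Lipschitz continuity of $C$ and the drop of $\|C\|$ from $2\eps$ to below $\eps$. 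This displacement argument is precisely the device that makes a uniform bound on $\gamma$ unnecessary: large $\gamma_{j,a}$ means a short step, and the total travel is controlled by the (summable) predicted reductions. Your proposal is missing this entire second mechanism.
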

\begin{proof}
The proof proceeds by contradiction. Suppose that 
$\limsup_{j\to \infty} \|C_j\|>0$. Then, by Lemma~\ref{lem:lem5ulb}, 
there exists $ \hat j$ such that for $j\ge  \hat j$,
\[
\rared^c_{j,a}= \max\left\{\|C_j\|^2,\sum_{r=0}^{v^c_j-1}\mu^c_{jr}\|C_{j-r}\|^2\right\}-\|C_{j+1}\|^2
\]
which implies by Lemma~\ref{lem:decreasenonomonotone}:
\begin{equation*}
\|C_{j+1}\|^2 \le M_{\hat j}-\rho_1\sum_{r=  \hat j}^j \mu^{\min\{j-r,\nu\}}\pred^c_{r,a} ~~\text{where} ~~M_{\hat j}=\max_{  \hat j-\nu^c_j<l\le   \hat j} R_l,
\end{equation*}
Since $\mu \in (0,1)$, we thus get for all $j\ge  \hat j$,
\begin{equation}\label{eq:usedecreaselemma}
\|C_{j+1}\|^2 \le M_{\hat j}-\rho_1\mu^{\nu} \sum_{r=  \hat j}^j \pred^c_{r,a}.
\end{equation}
We first consider the case $\liminf_{j\to\infty} \|C_j\|\neq 0$: in that situation, 
there exists an $\epsilon$ for which $\|C_j\|\ge \epsilon$ for
$j\ge  \hat j$. By~\eqref{eq:decrease:c}, this implies
\begin{equation*} 
\pred^c_{j,a}  \ge \kappa_1 \frac{\epsilon^2}{(\kappa^c_J)^2+\gamma_{j,a}},
\end{equation*}
which, combined with~\eqref{eq:usedecreaselemma}, leads to
\begin{equation}\label{eq:gammasumbound}
\sum_{j= \hat j}^\infty \frac{1}{(\kappa^c_J)^2+\gamma_{j,a}} < \infty 
\; \Rightarrow \; \lim_{j \to \infty}\gamma_{j,a}=+\infty.
\end{equation}
Since $\|C_j\|\ge \epsilon$, similarly to \emph{Case 1} in the proof of Lemma~\ref{lem:welldefined}, we 
can show that for sufficiently large $\gamma_j$, the step must be accepted. This, together with the step 
acceptance rule, guarantees that there exists an upper bound for $\gamma_{j,a}$, which contradicts~\eqref{eq:gammasumbound}. Thus, we must have $\liminf_{j\to\infty} \|C_j\| = 0$.  

Because we assumed that $\limsup_{j\to\infty}\|C_j\|\neq 0$, for any $\epsilon>0$, 
there exists a subsequence $\{\underline j\}$ such that $\|C_{\underline j}\|\ge 2\epsilon$. 
Since we just established $\liminf_{j\to\infty} \|C_j\| = 0$, for each index $\underline j$ of 
that subsequence, there exists $\overline j> \underline j$ such that 
$\|C_{\overline j+1}\|<\epsilon$ and $\|C_j\|\ge \epsilon$ for $j=\underline j,...,\overline j$.
For $j=\underline j,...,\overline j$, it thus holds that
\begin{equation} \label{eq:predcepsCV}
\pred^c_{j,a}(n_{j,a}) \ge \kappa_1 \frac{\epsilon^2}{(\kappa^c_J)^2+\gamma_{j,a}},\,j=\underline j,...,\overline j.
\end{equation}
On the other hand, by~\eqref{eq:usedecreaselemma} and our assumption that 
$\limsup_{j\to\infty}\|C_j\|\neq 0$, we have that
$
\sum_{j=\underline j}^{\overline j} \pred^c_{j,a} \to 0 \text{ for }\underline j\to \infty.
$
Meanwhile, Lemma~\ref{lem:usefulbounds} guarantees that $\|s_{j,a}\|\le \frac{b_0}{\gamma_{j,a}}$, 
so that
\begin{eqnarray*}
\|x_{\overline j+1}-x_{\underline j}\| &\le& \sum_{j=\underline j}^{\overline j} \frac{b_0}{\gamma_{j,a}} 
= b_0 \sum_{j=\underline j}^{\overline j} \frac{(\kappa^c_J)^2+\gamma_{j,a}}{\gamma_{j,a}((\kappa^c_J)^2+\gamma_{j,a})}\\
&\le &\frac{b_0}{\epsilon^2 \kappa_1} \left(\frac{(\kappa^c_J)^2}{\gamma_{\min}}+1 \right)\sum_{j=\underline j}^{\overline j} \pred^c_{j,a}
  \to 0 \text{ for }\underline j\to \infty,
\end{eqnarray*}
where the last inequality comes from~\eqref{eq:predcepsCV}.
By Assumption~\ref{assum:smoothfphi}, we then obtain
\[
\epsilon=2\epsilon-\epsilon\le \|C_{\overline j+1}\|-\|C_{\underline j}\|\le\|C_{\overline j+1}- C_{\underline j}\|\le 2\|J^c_j\|\|\|x_{\overline j+1}-x_{\underline j}\| 
+ L^c\|x_{\overline j+1}-x_{\underline j}\|,
\]
and the right-hand side goes to $0$ as $\underline j\to \infty$. 
We have thus reached a contradiction, from which we conclude 
that $\lim_{j \to \infty} \|C_j\|= 0$.
\end{proof}

We now establish convergence towards a certain form of 
stationarity.

\begin{theorem}\label{theo:glimit}
Under the assumptions of Theorem~\ref{theo:feaslimit}, the 
sequence of iterates of Algorithm~\ref{alg:LMeq} is such
that
$
\liminf_{j\to +\infty} \|\widetilde W_j (g_j)\| = 0.
$
\end{theorem}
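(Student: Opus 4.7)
The approach is to argue by contradiction, mirroring the proof of Theorem~\ref{theo:feaslimit}. Suppose $\liminf_{j\to\infty}\|\widetilde W_j(g_j)\|=2\epsilon>0$, so that there exists $\hat j$ with $\|\widetilde W_j(g_j)\|\ge\epsilon$ for every $j\ge\hat j$. Combined with Theorem~\ref{theo:feaslimit}, which yields $\|C_j\|\to 0$, this gives $\pred^c_j\le\tfrac{1}{2}\|C_j\|^2\to 0$ and $(\pred^c_j)^\xi\to 0$, while condition~\eqref{eq:treq:inexact} together with~\eqref{eq:boundsFC1} ensures $\pred^t_j\ge\kappa_2\epsilon^2/((\kappa^F_J)^2+\gamma_j)$ for $j\ge\hat j$.

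I would then analyze the accepted iterations (subscripted $j,a$) by separating them according to which branch of \textbf{Step 5} of Algorithm~\ref{alg:LMeq} triggered acceptance. For iterations falling in the first branch, the acceptance rule enforces $\pred^l_{j,a}\ge\rho_2\pred^t_{j,a}\ge\rho_2\kappa_2\epsilon^2/((\kappa^F_J)^2+\gamma_{j,a})$ together with $\rared^l_{j,a}\ge\rho_1\pred^l_{j,a}$. The hypothesis of Lemma~\ref{lem:decreasenonomonotone2} is then satisfied for $j$ large enough, since the multiplier correction $|\mathcal{L}(x_{j+1},y_j)-\mathcal{L}(x_{j+1},y_{j+1})|\le\|y_j-y_{j+1}\|\,\|C_{j+1}\|$ vanishes by Assumption~\ref{assum:yj} and $\|C_{j+1}\|\to 0$, while $\pred^l_{j,a}$ remains positive. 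Applying Lemma~\ref{lem:decreasenonomonotone2}, together with the boundedness of $\mathcal{L}$ on $\Omega$ implied by Assumptions~\ref{assum:compactxj}-\ref{assum:yj}, forces the series $\sum \pred^l_{j,a}$ restricted to this branch to converge, which in turn forces $\gamma_{j,a}\to+\infty$ along it (or the branch to be exercised only finitely many times).

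For accepted iterations in the second branch, either $\pred^t_{j,a}<\max\{\pred^c_{j,a},(\pred^c_{j,a})^\xi\}$ or $\pred^l_{j,a}<\rho_2\pred^t_{j,a}$. In the first case, $\pred^t_{j,a}\to 0$ (since $\pred^c_{j,a}\to 0$), and combined with the lower bound from~\eqref{eq:treq:inexact} this also forces $\gamma_{j,a}\to\infty$. The second case is handled by a similar argument relying on the definition of $\pred^l_j$, the bounds~\eqref{eq:condWjapproxn}-\eqref{eq:tminusWtildet} on the inexact projection, and Lemma~\ref{lem:usefulbounds}, which together rule out $\pred^l_{j,a}<\rho_2\pred^t_{j,a}$ for bounded $\gamma_{j,a}$ as $j\to\infty$. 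Combining both branches, we conclude $\gamma_{j,a}\to+\infty$.

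To derive the final contradiction, I would invoke Lemma~\ref{lem:welldefined}: since $\|C_j\|+\|\widetilde W_j(g_j)\|>2\epsilon$ for every $j\ge\hat j$, this lemma provides a threshold $\bar\gamma$ above which every trial step is accepted. Combined with the post-acceptance update $\gamma_j\leftarrow\max(\gamma_{\min},\hat\gamma_1\gamma_j)$ and the bounded single-step growth factor $\hat\gamma_2$, this produces a uniform upper bound on $\gamma_{j,a}$, contradicting $\gamma_{j,a}\to+\infty$. The main obstacle I anticipate will be ensuring that this threshold $\bar\gamma$ can be taken uniformly in $j$ along the accepted subsequence: since $\|C_j\|\to 0$ deactivates Case~1 in the proof of Lemma~\ref{lem:welldefined}, the remaining dependency enters through $\epsilon_{k_j}=\min\{\alpha a_{k_j},\beta\epsilon\}$ in Case~2.2, and controlling it requires combining Lemma~\ref{lem:cakjbound} with the update rules for $R_j$ and $k_j$ in Algorithm~\ref{alg:R} to prevent $a_{k_j}$ from degrading the acceptance threshold along accepted steps.
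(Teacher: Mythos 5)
Your proposal assembles the same ingredients as the paper's proof --- the contradiction hypothesis, the lower bound on $\pred^t_j$ from \eqref{eq:treq:inexact}, $\pred^c_j\to 0$ from Theorem~\ref{theo:feaslimit}, the estimate $|\pred^l_{j,a}-\pred^t_{j,a}|=O(\|C_{j,a}\|)\to 0$, Lemma~\ref{lem:decreasenonomonotone2}, the boundedness of $\mathcal{L}$ on $\Omega$, and Lemma~\ref{lem:welldefined} --- but the order in which you deploy them creates a genuine gap. The paper invokes Lemma~\ref{lem:welldefined} \emph{first}: under the contradiction hypothesis the trial step is always accepted once $\gamma_j$ is large enough, so the update rule immediately yields a uniform bound $\gamma_j\le\gamma_M$, and hence $\pred^t_{j,a}\ge\kappa_2\epsilon^2/\bigl((\kappa^F_J)^2+\gamma_M\bigr)$ is a \emph{fixed} positive constant. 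That fixed lower bound is what (i) shows the first branch of Step~5 is eventually taken at \emph{every} accepted iteration (since $\max\{\pred^c_j,(\pred^c_j)^\xi\}\to 0$ and $|\pred^l_{j,a}-\pred^t_{j,a}|\to 0$), so that the hypothesis of Lemma~\ref{lem:decreasenonomonotone2} holds for all $j\ge\hat j$ as that lemma requires --- it cannot be applied ``restricted to a branch,'' because its conclusion telescopes over consecutive accepted iterations; (ii) makes the multiplier correction $\|y_{j+1}-y_j\|\,\|C_{j+1}\|\to 0$ eventually smaller than $\tfrac{\rho_1}{2}\pred^l_{j,a}$ --- mere positivity of $\pred^l_{j,a}$, which is all you assert, does not suffice if $\gamma_{j,a}$ could drift to infinity and drive $\pred^l_{j,a}$ to zero at an uncontrolled rate; and (iii) turns the summability of $\pred^l_{j,a}$ forced by Lemma~\ref{lem:decreasenonomonotone2} into a divergent series of identical positive terms, which is the final contradiction with the boundedness of $\mathcal{L}$.

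In short, your intermediate steps tacitly assume $\gamma_{j,a}$ bounded while your plan only derives that boundedness at the very end, so the argument as written is circular; it becomes the paper's proof once the Lemma~\ref{lem:welldefined} step is moved to the front and the branch dichotomy is replaced by the observation that the first branch is eventually always active. On the positive side, you correctly isolate the one point the paper itself glosses over, namely the uniformity in $j$ of the acceptance threshold $\bar\gamma$ (its dependence on $a_{k_j}$ through Case~2.2 of Lemma~\ref{lem:welldefined}), and your suggestion to control it via Lemma~\ref{lem:cakjbound} and the update rules of Algorithm~\ref{alg:R} is the right direction.
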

\begin{proof}
We again seek a contradiction by assuming that there exists $\epsilon>0$ 
such that $\liminf_{j\to +\infty}\|\widetilde W_j (g_j)\|\ge \epsilon$. 
From Lemma~\ref{lem:welldefined}, we know that, in that case, the trial 
step is always accepted if $\gamma_j$ is sufficiently large. By the 
updating rule for $\gamma_j$, this implies that the sequence $\{\gamma_j\}$ 
is bounded from above, i.e. there exists $\gamma_M>0$ such that 
$\gamma_j<\gamma_M$ for all $j$. From~\eqref{eq:treq:inexact}, we 
then have $\pred^t_{j,a}\ge \kappa_2\frac{\epsilon^2}{(\kappa^f_g)^2+\gamma_M}$:
since $\pred^c_{j,a}\le\|C_{j,a}\|^2\to 0$ thanks to Theorem~\ref{theo:feaslimit}, 
for $j$ sufficiently large, 
we must have eventually $\pred^t_{j,a}>\pred^c_{j,a}$.  Furthermore, using Lemma~\ref{lem:usefulbounds},
\begin{eqnarray*}
& & |\pred^l_{j,a}-\pred^t_{j,a}| 
\le |\widetilde m^l_j(0)-\widetilde m^l_j(n_{j,a})| 
+ \frac{1}{2} \| \gamma_j t_{j,a}+g_{j,a}\| \|n_{j,a}-\widetilde W_{j,a} (n_{j,a}) \|
\\ 
&\le &\frac{1}{2}\left(\|J^F_j\|^2+\gamma_{j,a}\right)\|n_{j,a}\|^2
+\left(\|{J^f_{j,a}}^\top F_{j,a}\|+
\|J^c_{j,a}\|\|y_{j,a}\|\right)\|n_{j,a}\|
+ b_0 (1+\kappa_W) \|n_{j,a}\|\\ 
&\le &\frac{1}{2}((\kappa^f_J)^2+\gamma_M)\|n_{j,a}\|^2
+(\kappa^f_J\kappa^f+\kappa^c_g \kappa_y+b_0(1+\kappa_W))\|n_{j,a}\| \\ 
&\le &\frac{1}{2}((\kappa^f_J)^2+\gamma_M)\left(\frac{\|J^c_{j,a}\|\|C_{j,a}\|}{\gamma_{j,a}}\right)^2+ (\kappa^f_g\kappa^f+\kappa^c_g \kappa_y+b_0(1+\kappa_W))\frac{\|J^c_{j,a}\|\|C_{j,a}\|}{\gamma_{j,a}}\\ 
&\le &\frac{1}{2}((\kappa^f_g)^2+\gamma_M)\left(\frac{\kappa^c_J \|C_{j,a}\|}{\gamma_{\min}}\right)^2+ (\kappa^f_g\kappa^f+\kappa^c_g \kappa_y+b_0(1+\kappa_W))\frac{\kappa^c_J \|C_{j,a}\|}{\gamma_{\min}}.
\end{eqnarray*}
The last right-hand side converges to zero by Theorem~\ref{theo:feaslimit}. Thus, 
for $j$ sufficiently large, we must have $\pred^l_{j,a}\ge \rho_2\pred^t_{j,a}$. Since 
the step is accepted, the conditions in Step 5 of Algorithm~\ref{alg:LMeq} 
ensure that we also have $\rared^l_{j,a}\ge \rho_1 \pred^l_{j,a}$. 

In addition, using
\[
|\mathcal{L}(x_{j+1},y_{j+1})-\mathcal{L}(x_{j+1},y_j)|\le \|y_{j+1}-y_j\|\|C_{j+1}\|
\]
together with the fact that we have just established above, $\rared^l_{j,a}\ge \rho_1 \pred^l_{j,a}$,
the boundedness of $y_j$ as given by Assumption~\ref{assum:yj} and $\|C_j\|\to 0$ as shown in Theorem~\ref{theo:feaslimit}, we have for sufficiently large $j$:
\[
\rared^l_{j,a}+\mathcal{L}(x_{j+1},y_j)-\mathcal{L}(x_{j+1},y_{j+1})\ge \frac{\rho_1}{2} \pred^l_{j,a}.
\]
Therefore, by Lemma~\ref{lem:decreasenonomonotone2},
\begin{eqnarray*}
|\mathcal{L}(x_{j+1},y_{j+1})| &\le 
&\max_{\hat j-\nu^l_j<l\le \hat j} \mathcal{L}(x_{l},y_{l}) 
-\frac{\rho_1}{2}\sum_{r= \hat j}^j\mu^{\min(j-r,\nu^l)}\pred^l_{r,a} \\
&\le 
&\max_{\hat j-\nu^l_j<l\le \hat j} \mathcal{L}(x_{l},y_{l}) 
-\frac{\rho_1\rho_2}{2}\sum_{r= \hat j}^j
\kappa_2\mu^{\nu^l} \frac{\epsilon^2}{{\kappa^F_J}^2+\gamma_M},
\end{eqnarray*}
where we used
$\pred^l_{j,a}\ge \rho_2 \pred^t_{j,a}\ge \kappa_2\frac{\epsilon^2}{(\kappa^f_g)^2+\gamma_M}$.
Since the sequence $\{\mathcal{L}(x_{j},y_{j})\}_{j} $ is bounded on $\Omega$,
we thus obtain $\sum_{j=\hat j}^\infty \kappa_2\frac{\epsilon^2}{(\kappa^f_g)^2+\gamma_M} <\infty$
and we arrive at a contradiction, from which we conclude that 
$\liminf_{j \to \infty} \|\widetilde W_j (g_j)\| = 0$.
\end{proof}

\revised{ 
To end this section, we point out that it is possible to strengthen the result of Theorem~\ref{theo:glimit} by 
replacing~\eqref{eq:condWjapproxg} with the following condition:
\begin{equation} \label{new:eq:condWjapproxg}
	\|\widetilde W_j(g_j)-W_j g_j \| \; \le \; 
	\xi_1\min\left\{\|\widetilde W_j(g_j)\|,\frac{1}{\gamma_j}\right\}.	
\end{equation}
Combining this condition with
\begin{eqnarray*}
\| W_j g_j\| + \|C_j\| &\le & \| \widetilde W_j(g_j)\| + \|\widetilde W_j(g_j)-W_j g_j \| + \|C_j\| \le (1+\xi_1)  \| \widetilde W_j(g_j)\| + \|C_j\|,
\end{eqnarray*}
the proofs of Theorems \ref{theo:feaslimit} and \ref{theo:glimit} can be readily 
modified so as to establish the stronger result 
$\liminf_{j \to \infty} \left(\| W_j g_j\| + \|C_j\|\right)=0$.
Although both conditions~\eqref{new:eq:condWjapproxg} and~\eqref{eq:condWjapproxg} are trivially satisfied for an exact step, we point out that a given iterative solver that uses \eqref{new:eq:condWjapproxg} instead 
of \eqref{eq:condWjapproxg} to estimate $\widetilde W_j(g_j)$ may need more 
iterations to converge. In our experiments, both conditions lead to a 
similar performance: interestingly, in both cases, the results were comparable 
to enforcing the condition $\|\widetilde W_j(g_j)-W_j g_j \| \le 10^{-12}$. We 
thus settled on the criterion that was least demanding at the iteration level, 
and adopted condition~\eqref{eq:condWjapproxg} in our 
implementation. 
}

\section{Numerical experiments}
\label{sec:numerics}

In this section, we report the results of several experiments performed in 
order to assess the efficiency and the robustness of Algorithm~\ref{alg:LMeq}. 
We implemented all the algorithms as Matlab m-files. Our tests include 
small-scale standard test cases, a challenging nonlinear nonconvex data 
assimilation task, and two large-scale inverse problems with systems governed 
by PDE-based dynamics. Our main goal is to understand the behavior of our 
method on generic least squares problems compared to standard alternatives, 
and to observe how it handles additional challenges such as nonlinearity in 
both the constraints and the objective functions.

\subsection{Implementation details}
\label{subsec:numerics:implement}

Our parameter values follow that previously adopted for matrix-free 
trust region SQP~\cite{heinkenschloss2014matrixfreeSQP} and nonmonotone 
trust-region methods~\cite{MUlbrich_SUlbrich_2003}. We thus set
$
 \nu =\nu^l=5, ~ \mu=10^{-3}, ~ \rho_1=10^{-2}, ~ \rho_{2}=10^{-2} , 
 ~ \hat \gamma_1 =0.9 , ~\hat \gamma_2 =2, $
 $
  \alpha = \beta =0.1,~ \xi=3/4, ~\gamma_{min}=10^{-16}, 
  ~\mbox{and}~ \gamma_{0}=1.
$
For the sequence $\{ a_k\}$, we used 
$a_0= \min \left\{ 0.1 \max (1, \|C_j\|), 
\|\widetilde W_j(g_j)\|+\|C_j\|\right\} 
~\mbox{and}~a_ k= a_0 (k+1)^{-1/2}~~~\forall k\ge 1
$.
In all our variants, the Lagrange multipliers $y_j$ are computed as 
the solution of $\min_y \|g_j-J^c_j y\|^2_2$. 

We implemented Algorithm~\ref{alg:LMeq} in an exact and an inexact variant, 
respectively using direct and iterative linear algebra.
For the exact variant, named \texttt{LM-EC-EXACT}, the 
subproblems~\eqref{normalstep} and~\eqref{tangentstep2} are solved 
with the backslash Matlab operator (which uses the \revised{\texttt{UMFPACK}} Fortran 
library). Direct elimination based on Matlab's LU-factorization routine was 
used to compute $W_j=\Gamma_j (\Gamma_j^{\top} \Gamma_j)^{-1}\Gamma_j^{\top}$ 
where $\Gamma_j = P_j^{\top} \begin{bmatrix} -L_j^{-\top} N_{j}^{\top} 
\\ I_{n-p} \end{bmatrix},$ where $N_j \in \mathbb{R}^{(n-p)\times p}$, 
$L_j \in \mathbb{R}^{p\times p}$ (a lower triangular matrix), and $P_j$ 
(permutation matrix) are computed from a factorization of the Jacobian matrix 
$J^c_j$ of the form $\begin{bmatrix} L_j  \\ N_{j} \end{bmatrix}  
R_j = P_j J^{c}_j$. Note that $W_j$ is computed explicitly, the steps 
$\widetilde t_j$ and $t_j$ (and thus $\widetilde s_j$ and $s_j$) coincide 
for the exact version. 

The inexact variant, named  \texttt{LM-EC-MATRIXFREE}, is a matrix-free 
implementation of Algorithm~\ref{alg:LMeq}, that is similar in spirit to 
matrix-free trust-region 
implementations~\cite{heinkenschloss2014matrixfreeSQP}. However, since 
our method relies on regularization rather than trust-region, we can use 
a standard conjugate gradient (\texttt{cg}) method to solve our subproblems. 
For approximately solving~\eqref{normalstep}, we apply \texttt{cg} until 
either the residual norm drops below  \texttt{min$\{$1e-4,
\revised{max$\{$1e-15,1e-8$\times\xi^0_n\}\}$}}, where $\xi^0_n$ is the norm of the 
residual after one iteration, or a maximum of 1000 iterations has been  reached. 
Similarly, the approximate tangential step~\eqref{tangentstep_approx} 
is computed using \texttt{cg} with a tolerance of \texttt{min$\{$1e-4,
max$\{$1e-15,\revised{1e-8$\times\xi^0_t\}\}$}} (where $\xi^0_t$ is the norm of the 
residual after one iteration) and a maximum of 1000 iterations; 
to compute the residual vector in an iteration of \texttt{cg}, the 
\texttt{minres} solver~\cite{SCTChoi_CCPaige_MASaunders_2011} is applied 
to~\eqref{compute_W_j_exactly} with the same tolerance. Finally, the vector 
$t_j$ and the projection operator $\widetilde W_j$ are computed using 
\texttt{minres} with the tolerance \texttt{min$\{$1e-4,
max$\{$\revised{1e-15},min$\{\|n_j\|,1/\gamma_j^2\}\}\}$}. 

For comparison, we implemented a Gauss-Newton solver that does not rely on 
regularization: the underlying iteration is $x_{j+1}=x_j + n_j + t_j$, where 
$n_j$ and $t_j$ are respectively solutions of (\ref{normalstep}) and 
(\ref{tangentstep2}) with $\gamma_j=0$. As for our proposed solver, we 
implemented two variants of the Gauss-Newton method; an exact version, termed 
\texttt{GN-EC-EXACT}, where the two steps $n_j$ and $t_j$ are computed using 
the backslash Matlab operator; and a matrix-free implementation, 
named \texttt{GN-EC-MATRIXFREE}, where the steps are computed using the same 
procedure as our matrix-free implementation of Algorithm~\ref{alg:LMeq}.
Our tests also include an implementation of the nonmonotone SQP trust-region 
method~\cite{MUlbrich_SUlbrich_2003}: this trust-region solver will be 
referred to as~\texttt{TR-EC-EXACT}. For the latter solver, we set the initial trust-region radius to 1 and used the 
default setting for the rest of the parameters as in Ulbrich and Ulbrich~\cite{MUlbrich_SUlbrich_2003}. All the subproblems 
in the nonmonotone trust-region \texttt{TR-EC-EXACT} are solved using the 
Steihaug-conjugate gradient method~\cite{TSteihaug_1983}. Unlike our proposed 
algorithm, the nonmonotone trust-region requires an approximate Hessian $B$   
for the objective function $f$: we employ the Gauss-Newton approximation,
 i.e.,  for a given $x \in \mathbb{R}^d$, we set $
B(x)=J^F(x)^{\top}J^F(x)$. Note that the \texttt{TR-EC-EXACT} solver does not 
have a matrix-free counterpart and thus can not be used to solve our most 
challenging PDE inverse problems. For that reason, \texttt{TR-EC-EXACT} will only be 
tested on problems for which it is possible to store the Jacobians of $F$ and 
$C$ in memory.

\subsection{Test on standard least-squares problems}
\label{subsec:numerics:otherls}

In this section, we report numerical results on a \revised{collection 
$\mathcal{P}$ of 20 problems used in Li et al~\cite{ZFLi_MROsborne_TPrvan_2002}, 
formed by the nonlinear least-squares 
problems subject to equality constraints within well-known 
benchmarks~\cite{WHock_KSchittkowski_1980,Schittkowski_1987}: the problem dimensions 
range} between $2$ and $9$. The indexes of the chosen problems as given in the 
reference~\cite{ZFLi_MROsborne_TPrvan_2002} are: $6,$ $26,$  $42,$  $ 47,$  
$ 60, $  $65,$  $ 77,$  $ 79,$  $ 216,$  $ 235,$  $ 249,$  $ 252,$  $ 269,$  
$ 316,$  $ 317,$  $ 318,$  $ 322,$  $ 344,$  $ 345$ and $ 373$. For all 
problems, we used the starting points $x_0$ given in the above reference. A 
method was considered successful if it reached an iterate such that 
$\max(\|C_j\|, \|\widetilde W_j(g_j)\|) \le \revised{10^{-6}}$: if this was not the 
case after $j_{\max}:=1000$ iterations, the method was considered to have 
failed.

To compare the algorithms in this section, we use performance profiles proposed 
by Dolan and Mor\'e~\cite{Dolan_2002}. Given the set of problems $\mathcal{P}$ 
(of cardinality $|\mathcal{P}|$) and a set of algorithms (solvers) 
$\mathcal{S}$, the performance profile $\rho_s(\tau)$ of an algorithm~$s$ is 
defined as the fraction of problems where the performance ratio $r_{p,s}$ is at 
most $\tau$:
\begin{eqnarray*}
 \rho_s(\tau) \; = \; \frac{1}{|\mathcal{P}|} \mbox{size} 
 \{ p \in \mathcal{P}: r_{p,s} \leq \tau \} &~\mbox{where}~~& r_{p,s} 
 \; = \; \frac{t_{p,s} }{\min\{t_{p,s}: s \in \mathcal{S}\}}.
\end{eqnarray*}
The scalar $t_{p,s} > 0$ measures the performance of the algorithm~$s$ when 
solving problem~$p$, seen here as the number of iterations. Better performance 
of the algorithm~$s$ relatively to the other algorithms on the set of problems,
is indicated by higher values of $\rho_s(\tau)$.
In particular, efficiency is measured by $\rho_s(1)$ (the fraction of problems 
for which algorithm~$s$ performs the best) and robustness is measured by 
$\rho_s(\tau)$ for $\tau$ sufficiently large (the fraction of problems solved 
by~$s$). To facilitate the visualization of the results~\cite{Dolan_2002}, we 
plot the performance profiles in a $\log_2$-scale.

\begin{figure}[!ht]
\centering
\subfigure[Exact implementation. \label{fig1:standard:pb}]{
\includegraphics[scale=0.38]{./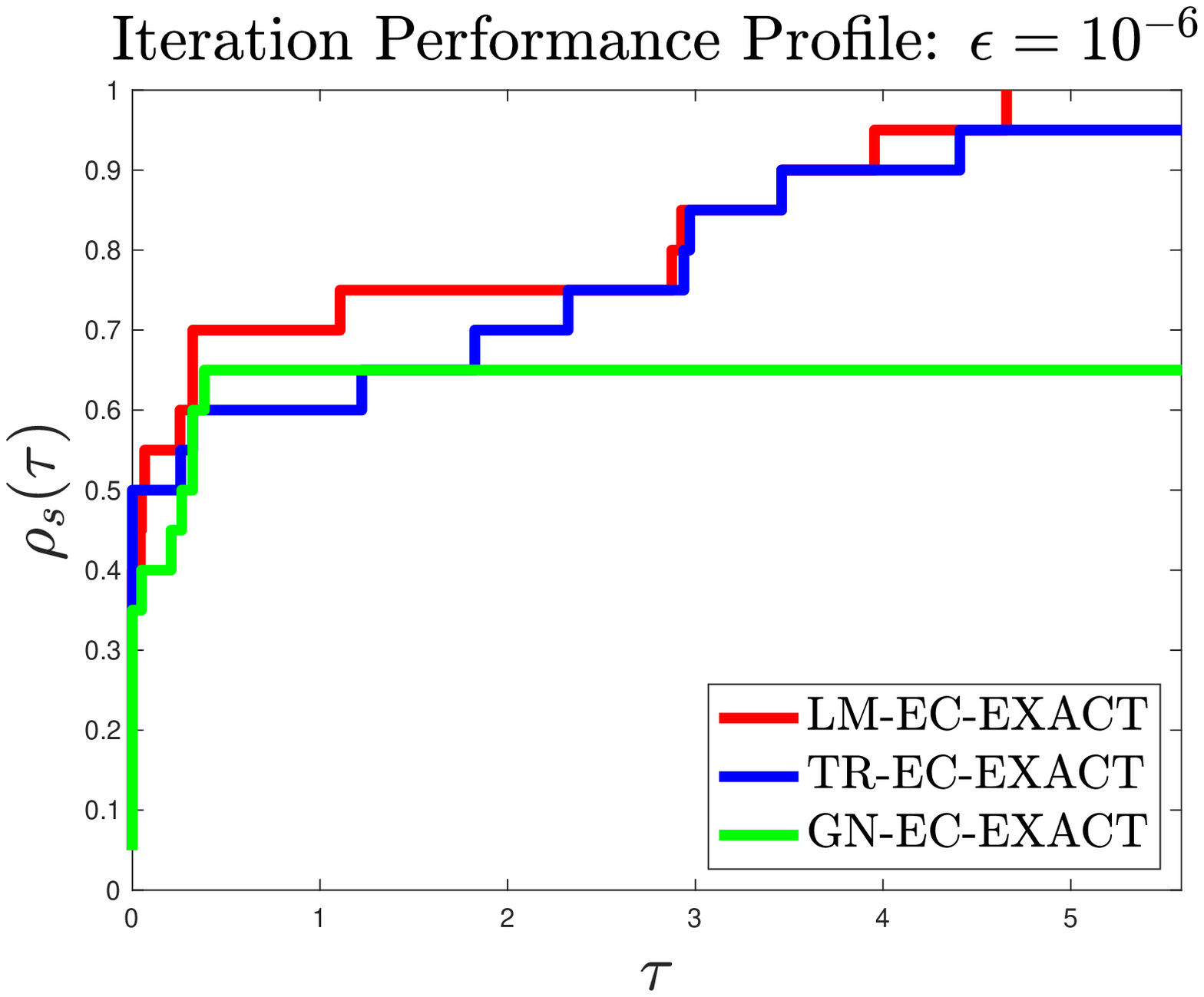}
}
\subfigure[Matrix-free implementation. \label{fig2:standard:pb}]{
\includegraphics[scale=0.38]{./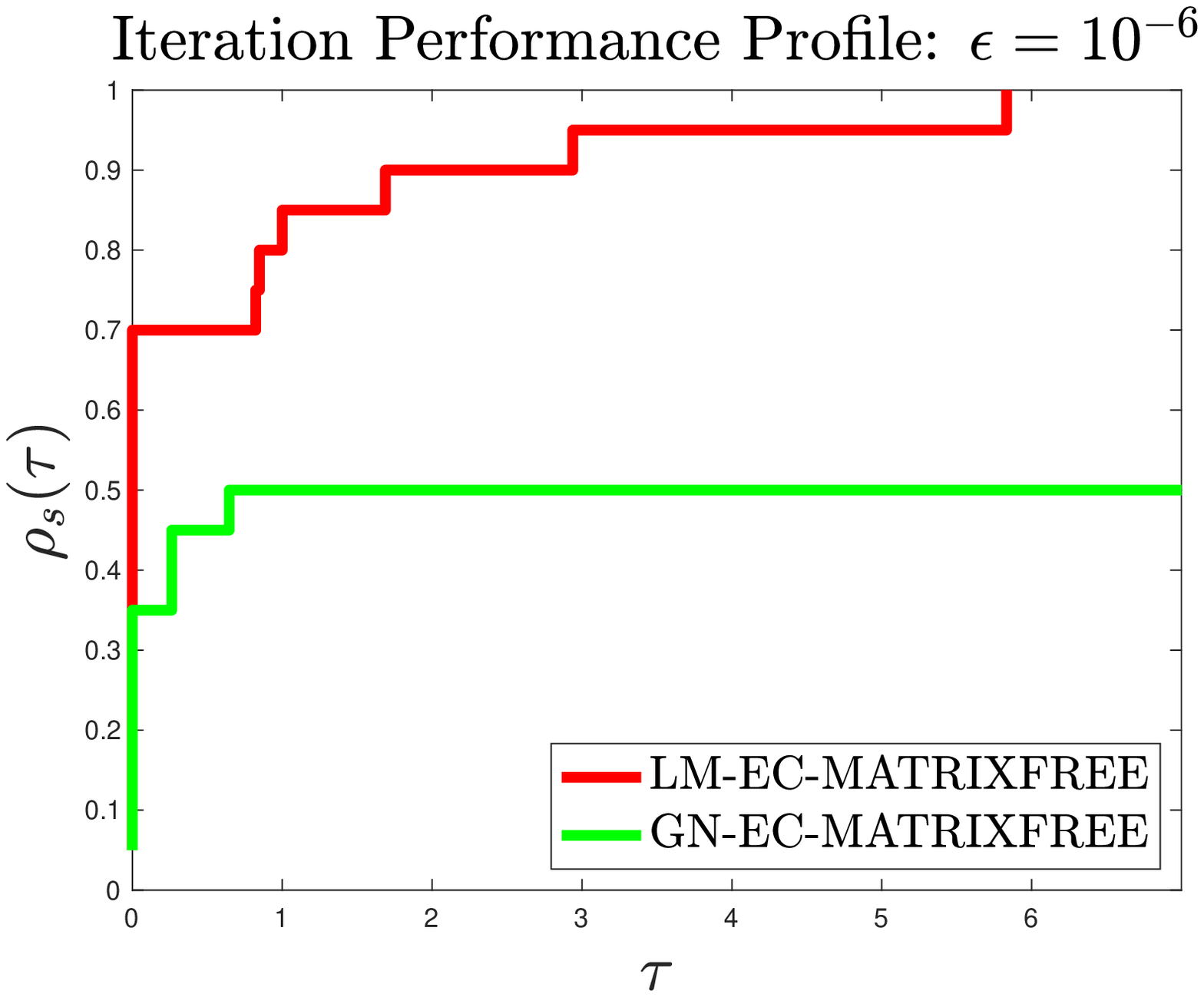}
}
\caption{Iteration performance profiles using $20$ standard constrained 
least-squares problems~\cite{ZFLi_MROsborne_TPrvan_2002}.}
\label{fig::standard:pb}
\end{figure}


\revised{
Figure~\ref{fig::standard:pb} depicts the obtained performance profiles: 
Figure \ref{fig1:standard:pb} compares the exact solvers while 
Figure \ref{fig2:standard:pb} compares the matrix-free variants. In 
both figures, the Gauss Newton methods (\texttt{GN-EC-EXACT} and \texttt{GN-EC-MATRIXFREE})  exhibit the worst performance, 
which one could expect as this variant is only guaranteed to converge 
locally. For instance,  in terms of efficiency,  the Gauss Newton approach is  the 
best only on less than $35\%$ of the problems. In term of robustness (when $\epsilon=10^{-4}$), for the 
exact implementation, \texttt{GN-EC-EXACT} converges for only $65\%$ of 
the problems, whereas \texttt{TR-EC-EXACT} converges for almost all the 
problems and \texttt{LM-EC-EXACT} converges for all problems.} 

\revised{In the 
matrix-free case,  \texttt{GN-EC-MATRIXFREE} converges for $50\%$ of the problems while \texttt{LM-EC-MATRIXFREE} converges for all of the problems. 
Our proposed method and 
\texttt{TR-EC-EXACT} exhibit almost the same performance (with a slight 
advantage for \texttt{LM-EC-EXACT}), which illustrates the efficiency and 
the robustness of globalization approaches (TR and our LM method) compared 
to a basic Gauss-Newton paradigm. Note that these matrix-free variants are 
sensitive to the tolerance for (approximately) solving the subproblems, and 
that fine tuning would likely be necessary to match the performance of their 
exact counterparts. Nevertheless, we will focus on these matrix-free variants 
in Sections~\ref{subsec:numerics:apdecons} and~\ref{subsec:numerics:gwater} as 
the exact variants will not be practical there.
}


\subsection{A data assimilation problem solved using exact linear algebra}  
\label{subsec:numerics:dataassim}

Data assimilation is \revised{concerned with the estimation} of a hidden random 
temporal process $(X_i)_{i=0}^T$, where $X_i$ is the state of the process at
time $i$ and $T$ denotes a time horizon. This \revised{technique} usually combines prior 
information about the process \revised{with} a numerical model and some observations. More 
formally, one aims to determine $x_0, \ldots, x_T$, where $x_i \in \real^n$ is 
an estimator of the state $X_i$, from (i), the prior state $X_0 = x_b + W_b$, 
$W_b \sim N(0,B )$, (ii) the numerical model $ X_i = \mathcal{M}_i(X_{i-1})$, 
$i=1,\ldots,T$, where $\mathcal{M}_i$ is the model operator at time $i$ and 
(iii) the observations  $y_i = \mathcal{H}_i (X_i) + V_i$, $V_i\sim N(0,R_i)$, 
$i=0,\ldots,T$. Here $T$ denotes the time horizon for the assimilation. The 
random vectors $W_b$ and $V_i$ represent the noise on the prior and the 
observation at time $i$, respectively, and are supposed to be Gaussian 
distributed with mean zero and covariance matrices $B$ and $R_i$, respectively. 

The 4DVAR ``strong constraint" method \cite{asch2016} is one of the most 
important data assimilation techniques for weather forecasting, that consists 
in computing $x_0, \ldots, x_T$ by solving the following optimization problem:
{\small{
\begin{equation} \label{function-f}
\begin{array}{rl}
\displaystyle  \min_{(x_0,\ldots,x_T) \in \real^{n(T+1)}} 
& f([x_0,\ldots,x_T]) := \frac{1}{2}\left(\|x_0 - x_b\|^2_{B^{-1}} + 
\sum_{i=0}^{T}\|y_i - \mathcal{H}_i(x_{i})\|^2_{R_i^{-1}}\right)\\
\mbox{s. t.}&   x_i - \mathcal{M}_i(x_{i-1}) = 0, \quad i=0,\ldots, T 
\end{array}
\end{equation}
}}where $x_{-1} = x_b$, $x=(x_0,\ldots,x_T)$, and $\|z\|^2_M=z^\top M z$ is the 
 norm defined by a positive definite matrix $M$. This problem conforms to our 
 generic formulation~\eqref{eq:prb}. In our experiments, the numerical 
model is chosen to be the nonlinear Lorenz 63 system \cite{lorenz63}: for a given 
$x_{i-1}=[x_{i-1}^{(1)}, x_{i-1}^{(2)},x_{i-1}^{(3)}]^{\top} \in \mathbb{R}^3$ 
(i.e., $n=3$), the model is given by
$$  \mathcal{M}_i(x_{i-1}) =\begin{pmatrix} -\sigma (x_{i-1}^{(1)} - x_{i-1}^{(2)}  \\ \rho x_{i-1}^{(1)}-x_{i-1}^{(2)} - x_{i-1}^{(1)} x_{i-1}^{(2)}\\ x_{i-1}^{(1)} x_{i-1}^{(2)} -\beta x_{i-1}^{(3)}
\end{pmatrix},$$
where $\sigma$, $\rho$, and $\beta$ are parameters
whose values are chosen as $10$, $28$, and $8/3$, respectively.
 These values are known to result
in chaotic behavior of the Lorenz 63 model with two regimes  \cite{lorenz63}. 
 We choose the matrices $B$ and $R_i,\, i=0,\ldots,T$ to be identity matrices.  
The observations $\{y_i\}_{i=\{0,\dots,T\}}$ and $x_b$ are generated randomly.
Each variable is observed through the nonlinear operator
 $$\mathcal{H}_i(x_i) = \frac{x_i}{2}\left( 1+ \frac{|x_i|^{\gamma^{\mbox{obs}}-1}}{10}\right),  \quad i=0,\ldots,T,$$
where $|x_i|$ is the component wise absolute value of $x_i$ and $\gamma^{\mbox{obs}}$ is a scalar which tunes the nonlinearity of the observation operator~\cite[Chapter 6]{asch2016}. 
Note that the problem is smooth if $\gamma^{\mbox{obs}}$ is an odd integer. 
 {\small{
   \begin{table}[]
\centering
\caption{Results on the data assimilation problem~\eqref{function-f} using two different $\gamma^{\mbox{obs}}$ values.}
\label{table:da}
\begin{tabular}{|p{.3cm}|p{.48cm}|l|l|l|l|l|l|l|l|l|l|l|l|l}
\hline
\multirow{2}{*}{} &\multirow{2}{*}{$T$} & \multicolumn{4}{c|}{ $\gamma^{\mbox{obs}}=3$} & \multicolumn{4}{c|}{ $\gamma^{\mbox{obs}}=5$}  \\ \cline{3-10} 
 &  & \#it & $f(x_j)$ & $\|C_j\|$  & $\| \hat g_j \|$ & \#it & $f(x_j)$ & $\|C_j\|$ & $\| \hat g_j \|$  \\ \hline \hline 
\multirow{5}{*}{ \rotatebox[origin=c]{90}{{\small{\texttt{LM-EC-EXACT}}}}}    &  2     &  83     &       5.9e+00          &    3.7e-10          &       8.0e-05 &     98     &       6.7e+00          &    7.7e-10          &       8.6e-05     \\ 
   &   3     &  $10^3$     &       8.8e+00          &    1.7e-05          &       1.3e-01    &  $10^3$     &       8.7e+00          &    5.8e-05          &       1.6e-01       \\ 
     &   15     &  70     &       2.6e+01          &    2.4e-09          &       4.5e-05      &     40     &       2.7e+01          &    2.7e-09          &       4.2e-05          \\  
  &45     &  36     &       6.7e+01          &    2.1e-11          &       8.5e-05      &  48     &       6.8e+01          &    1.8e-09          &       7.7e-05        \\  
 &   \small{225}     &  58     &       3.4e+02          &    5.0e-09          &       6.9e-05    &    45     &       3.4e+02          &    1.3e-11          &       8.3e-05          \\ \hline \hline
\multirow{5}{*}{ \rotatebox[origin=c]{90}{{\small{\texttt{TR-EC-EXACT}}}}}    &   2     & $10^3$     &       5.9e+00          &    6.1e-09          &       1.2e-02    &     $10^3$     &       6.6e+00          &    3.6e-08          &       1.5e-02    \\ 
 &  3     & $10^3$     &       7.9e+00          &    2.3e-16          &       1.0e-01     &   $10^3$     &       8.3e+00          &    2.4e-05          &       2.0e-01      \\ 
   &  15     & $10^3$     &       2.6e+01          &    9.1e-06          &       7.0e-02        &$10^3$     &       2.7e+01          &    1.8e-06          &       2.3e-02        \\ 
     &  45     & 73     &       6.7e+01          &    5.7e-09          &       7.6e-05        &     $10^3$     &       6.8e+01          &    1.1e-16          &       2.8e-02         \\  
  & \small{225}     & $10^3$     &       3.4e+02          &    6.7e-16          &       5.8e-03        &  $10^3$     &       3.4e+02          &    2.2e-07          &       5.5e-02              \\ \hline \hline
\multirow{5}{*}{ \rotatebox[origin=c]{90}{{\small{\texttt{GN-EC-EXACT}}}}}    & 2     &  $10^3$     &       3.2e+01          &    1.2e-01          &       1.0e+01           &      $10^3$     &       9.2e+00          &    2.4e-01          &       4.2e+0             \\ 
  &3     &  $10^3$     &       9.7e+00          &    3.2e-02          &       2.5e-01             &    $10^3$     &       9.2e+00          &    6.7e-02          &       9.0e-01     \\ 
   & 15     &  $10^3$     &       9.7e+01          &    2.0e-01          &       2.7e+01       &     $10^3$     &       3.5e+02          &    3.0e-01          &       2.1e+2     \\ 
     &45     &  $10^3$     &       7.4e+01          &    1.2e-01          &       4.7e+00    &    $10^3$     &       1.9e+02          &    4.1e-02          &       7.6e+1        \\  
  & \small{225}     &  $10^3$     &       3.5e+02          &    1.1e-01          &       5.3e+00             &      $10^3$     &       3.5e+02          &    2.7e-02          &       4.7e+0       \\ \hline 
\end{tabular}
\end{table}
}
}
 
 
  \begin{figure}[h]
\centering
\includegraphics[scale=0.34]{./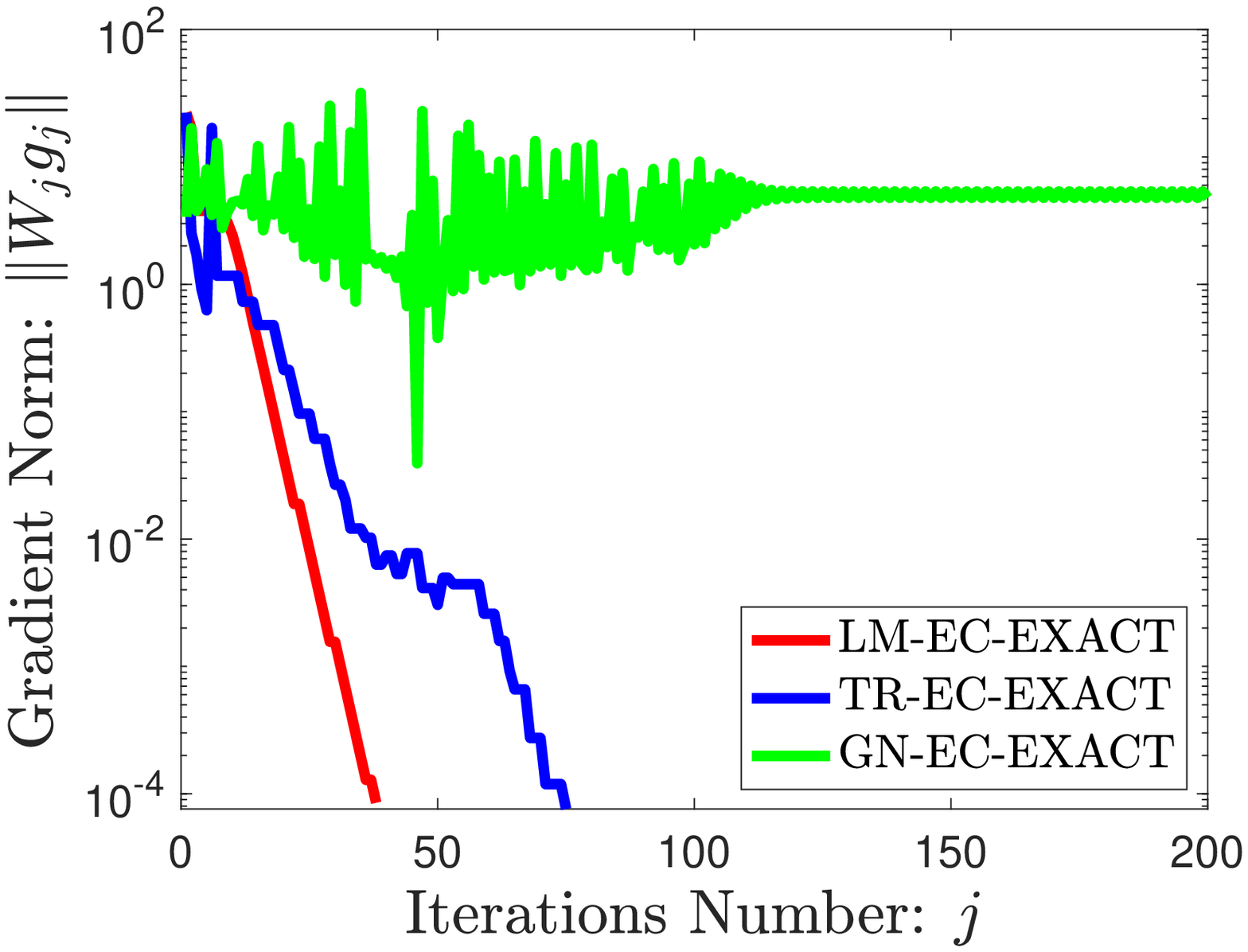}
\includegraphics[scale=0.34]{./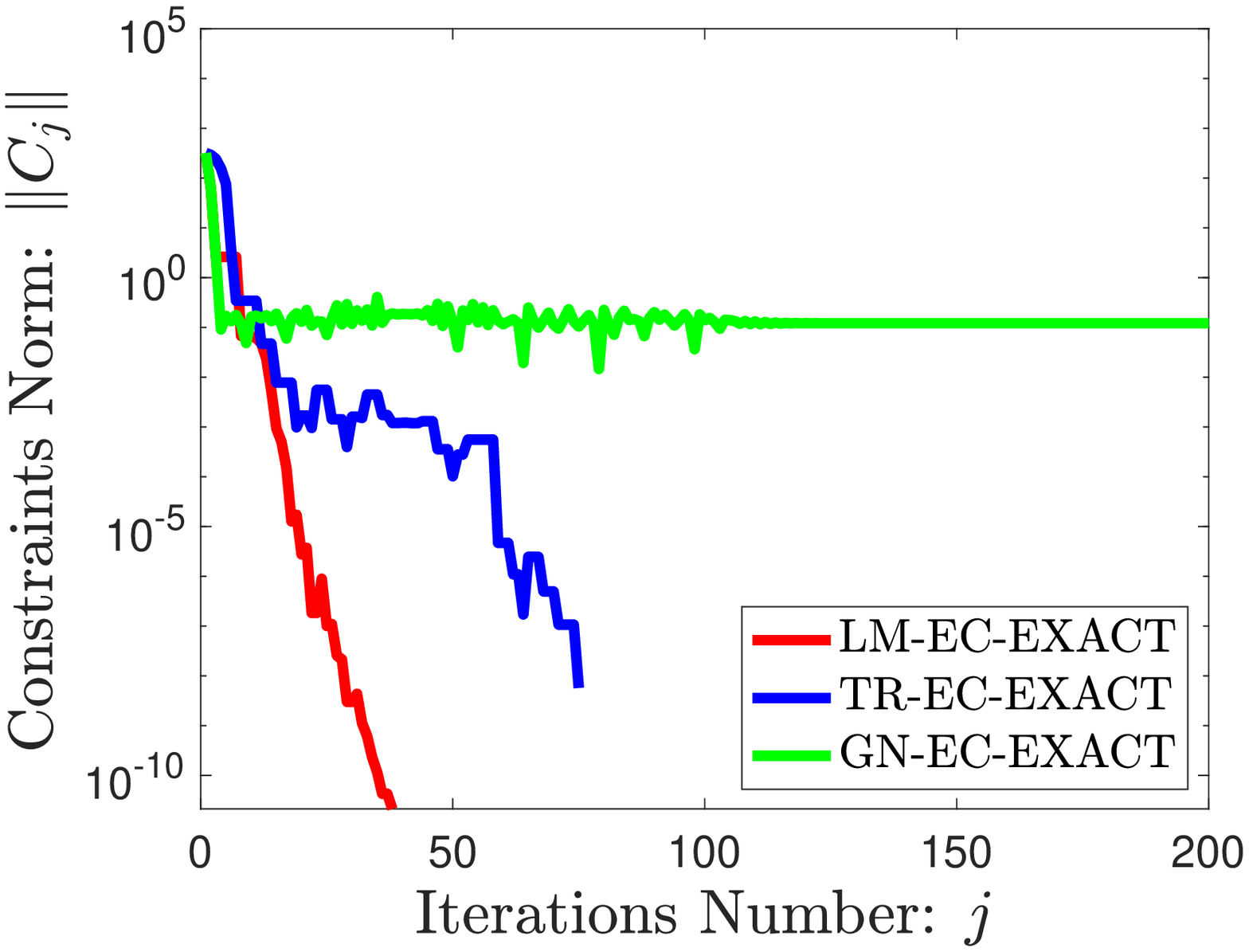}
\caption{Data assimilation convergence plots, considering $T=45$ and $\gamma^{\mbox{obs}}=3$.} \label{fig:ad:obs_op}
\end{figure}

As in the previous section, we tested our three solvers on this problem with 
the convergence criterion $\max(\|C_j\|, \|\widetilde W_j(g_j)\|) \le 10^{-4}$ 
and a maximum of $j_{\max}:=1000$ iterations.
Table \ref{table:da} and Figure~\ref{fig:ad:obs_op} depict the performance of 
the algorithms in terms of the constraint and the projected gradient norms. 
Considering that  $T = 45$ and $\gamma^{\mbox{obs}}=3$,  from 
Figure~\ref{fig:ad:obs_op}, we see that \texttt{LM-EC-EXACT} exhibits better 
performance compared to \texttt{TR-EC-EXACT} and \texttt{GN-EC-EXACT}: the 
latter method even diverges, as the values of the gradient and the constraint 
norm oscillate or stagnate over the iterations. On the contrary, our algorithm 
\texttt{LM-EC-EXACT} and the trust-region method \texttt{TR-EC-EXACT} are able 
to decrease both  the gradient and the constraint norm to a small accuracy, with 
our method converging faster. Table \ref{table:da} confirms the superiority of 
our approach on this problem: \texttt{GN-EC-EXACT} diverges for all instances, 
\texttt{TR-EC-EXACT} shows slightly better results as it converges for some 
instances, and \texttt{LM-EC-EXACT} converges for most of the instances.  

\subsection{A PDE-constrained optimization problem}
\label{subsec:numerics:apdecons}

We now study a least-squares problem with a hyperbolic forward PDE as equality 
constraints~\cite{Haber07modelproblems}. Given a time interval $[0,T]$, a time 
dependent density field $y(x,t)$, and a time velocity field $u(x,t)$, one 
wishes to solve the constrained nonlinear least-squares problem
\begin{equation} \label{function-PDE-opt}
\begin{array}{rl}
\displaystyle  \min_{(y, u)} &   f([y,u]):=\frac{1}{2}\|Q y -z \|^2+ 
 \frac{1}{2} \int_{\Omega} \left((u-u_r)^2 
 + |\nabla(u -u_r)|^2 \right) \\
\mbox{s. t.}& y + \nabla \cdot (yu) =0 \\
& y(0,x) = y_0.
\end{array}
\end{equation}
where $u_r$ is a chosen 
reference model. Given the forward problem on $y$, the operator 
$Q$ represents the projection of $y$  onto the space of the data $z$. Since 
problem~\eqref{function-PDE-opt} is infinite-dimensional, we considered a 
discretization grid such that $d=4096$. To this end, we adapted 
the existing Matlab implementation of this problem available 
online\footnote{http://www.mathcs.emory.edu/$\sim$haber/Code/ModelProblems.tar}.
To initialize the variables in each optimization procedure, we used the 
reference model for $u$ and zero for $y$. Due to the nature of the PDE 
problem, only matrix-free optimization solvers can be used, hence only 
\texttt{LM-EC-MATRIXFREE} and \texttt{GN-EC-MATRIXFREE} were tested on 
this problem. 
\revised{
Figure \ref{fig:pde:op} shows the first $200$ iterations of the two methods.
Within $200$ iterations, our proposed approach is able  to reduce both  the norm of the gradient and the norm of the constraints below $10^{-5}$. The Gauss-Newton method converges to a feasible point but does not reach a first-order optimal solution as the norm of the projected gradient remains large.
}


  \begin{figure}[!h]
\centering
\includegraphics[scale=0.34]{./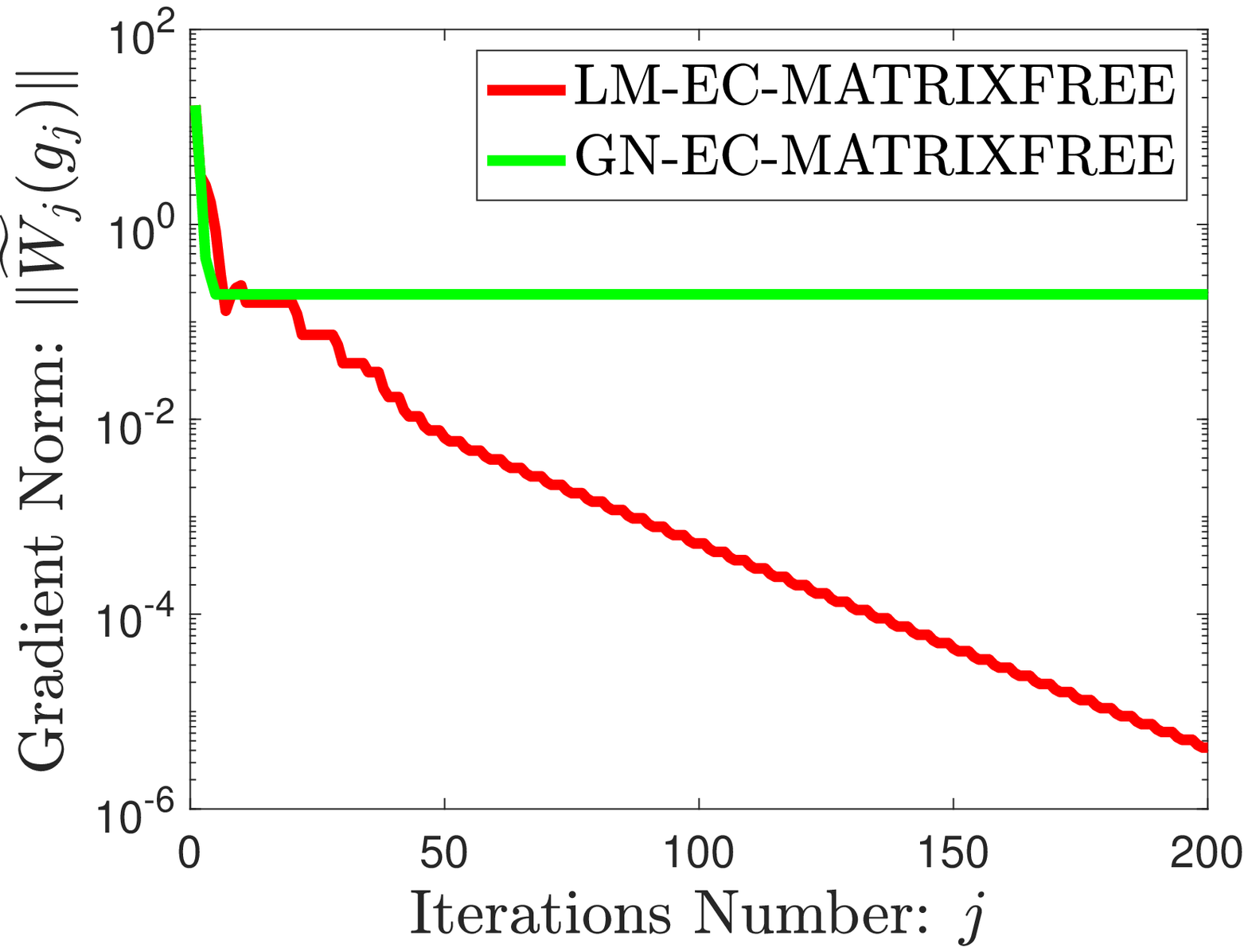}
\includegraphics[scale=0.34]{./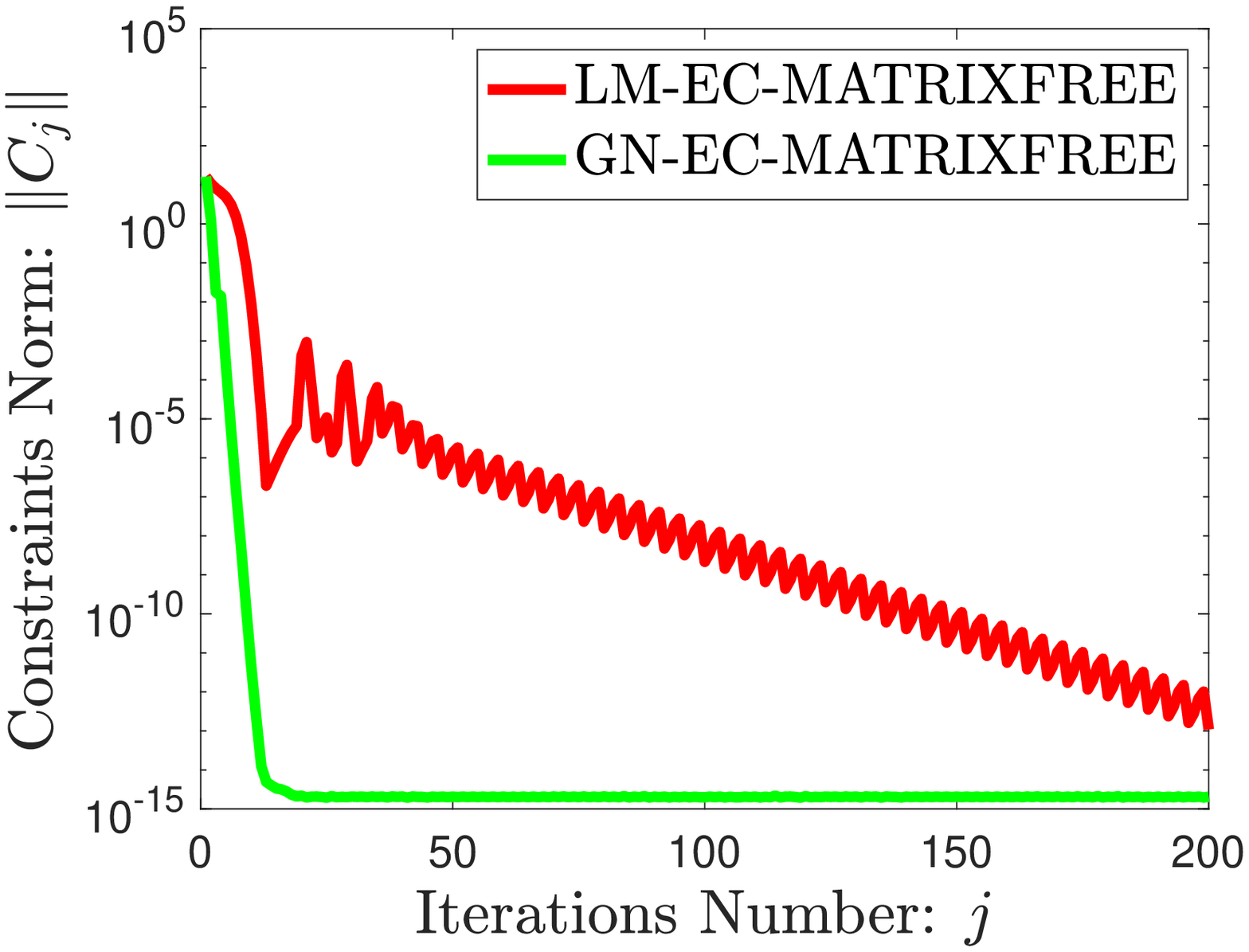}
\includegraphics[scale=0.34]{./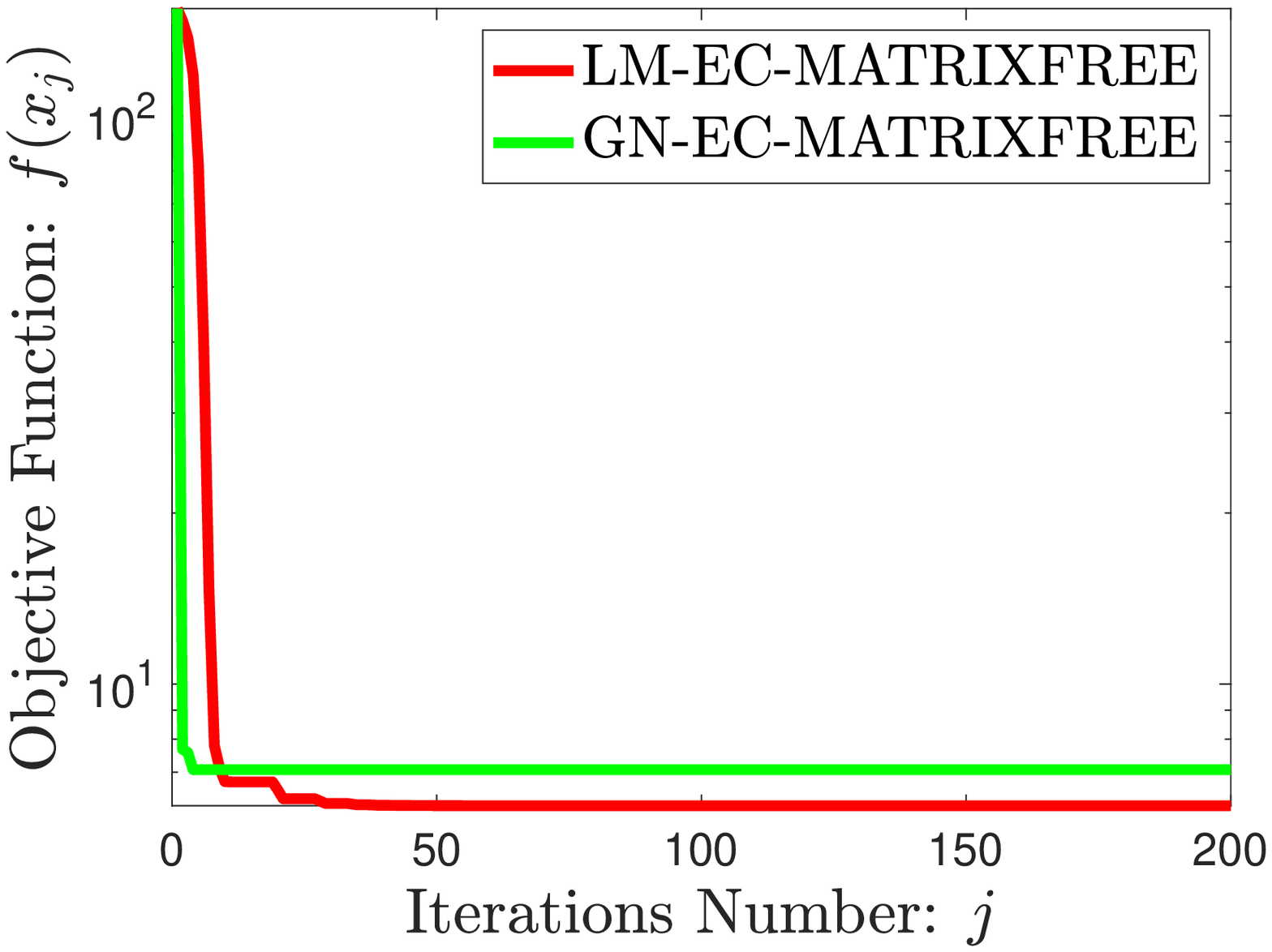}
\includegraphics[scale=0.34]{./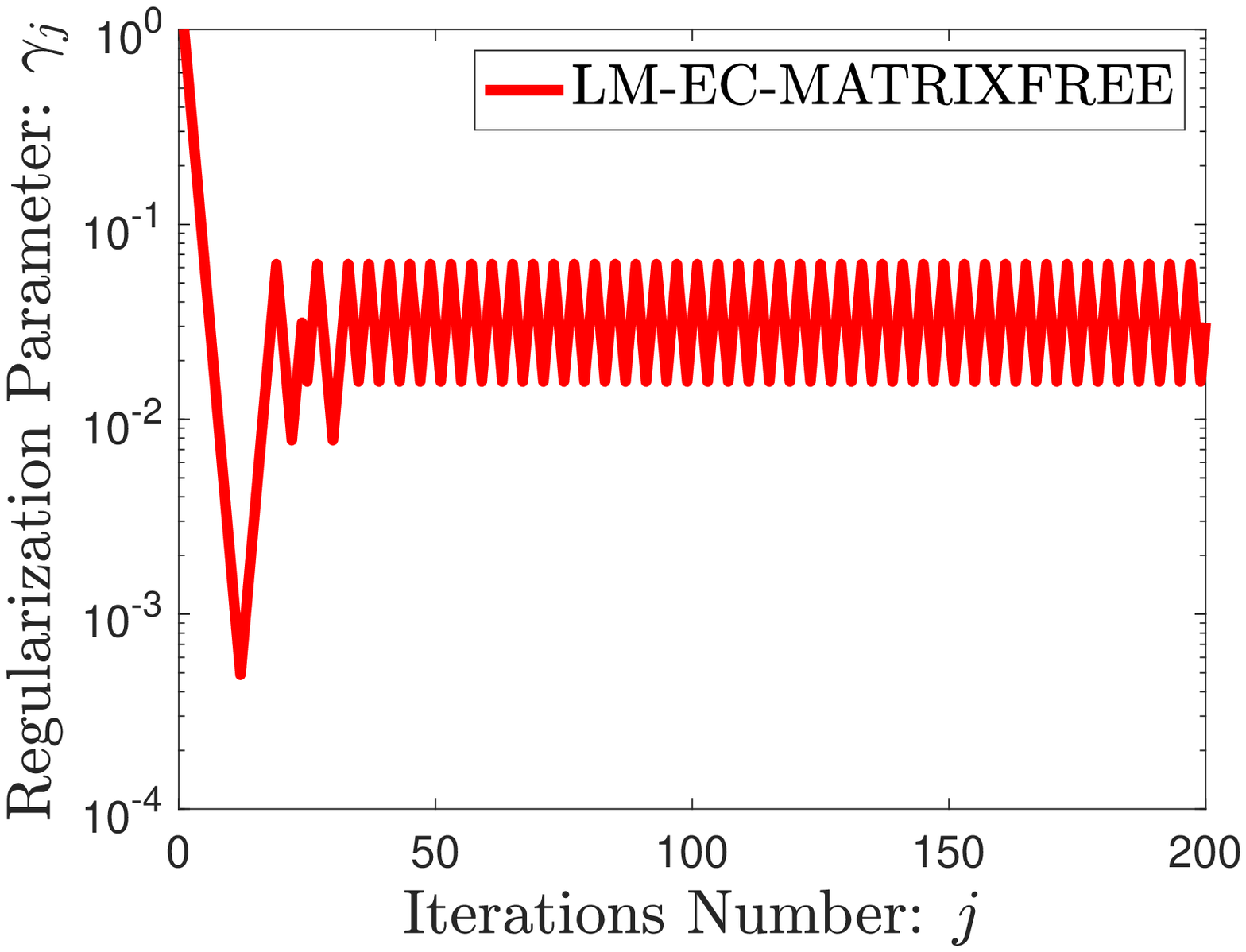}
\caption{Convergence plots for the PDE-constrained optimization problem 
using matrix-free solvers.} \label{fig:pde:op}
\end{figure}

\subsection{A coupled ODE-PDE nonlinear inverse problem}
\label{subsec:numerics:gwater}

We finally compare our methods on the \texttt{G\_Water} problem described by 
Schittkowski~\cite[Section 6.3]{schittkowski1997parameter}. This inverse problem 
features a nonlinear least-squares objective and nonlinear constraints formed 
by a discretized coupled ODE-PDE system modeling acidification of groundwater 
pollution. The resulting infinite-dimensional optimization problem is:
\[
\left\{
\begin{array}{lrll}
\min\limits_{c_m,c_{im}} & f([c_m, c_{im}]):=& \frac 12\left\|c_m(40,t)-\frac{D_m}{V_m} \frac{\partial c_m}{\partial x}(40,t)-\hat h\right\|^2 &  \\
\text{subject to} & \theta_m \frac{\partial c_m}{\partial t}(x,t) & =  \theta_m D_m \frac{\partial^2 c_m}{\partial x^2}(x,t)-\theta_m V_m \frac{\partial c_m}{\partial x}(x,t) &
  \\
& \theta_{im} \frac{\partial c_{im}}{\partial t}(x,t) & =  \alpha(c_m(x,t)-c_{im}(x,t)) & \\
& c_m(0,t)-\frac{D_m}{V_m}\frac{\partial c_m}{\partial x}(0,t) & =  \left\{\begin{array}{ll} 5800, & \text{if }t<0.01042 \\ 0, & \text{otherwise}\end{array}\right. & \\
& c_m(80,t)+\frac{D_m}{V_m} \frac{\partial c_m}{\partial x}(80,t) & =  0, &
\end{array}
\right.
\]
where $\{\theta_m,\theta_{im},V_m,\alpha,D_m\}$ are parameters, $c_m$ and 
$c_{im}$ are the functions to determine, $\hat h$ is the infinite-dimensional 
observation vector, and $\Omega\times T = (0,80)\times (0,2.55]$ is the domain. 
The initial conditions are that $c_{im}(x,0) = 0$ and $c_m(x,0)=0$.



In order to generate the measurements, we took the values of the parameters 
corresponding to the lowest residual in the reported results 
in~\cite{schittkowski1997parameter}, and simulated the PDE using a spatial 
discretization with $n_x = 16$ using the \texttt{ode15s} MATLAB function, which 
generated a time discretization of $n_t=197$. Since there is an observation at 
each time point, we obtained a residual vector (corresponding to $F(x)$ 
in~\eqref{eq:prb}) of length $197$. We used the measurements of 
$\tilde h(c_m):= c_m(40,t)-\frac{D_m}{V_m} \frac{\partial c_m}{\partial x}(40,t)$ 
at each of the time points to compute the vector of $\hat{h}$. Ultimately, the 
dimension of the constraints vector $C(x)$ is $m=6304$ and the number of the 
unknown variables was $d=6309$.

We emphasize that this problem is highly nonlinear and difficult to solve, 
especially with a low-quality starting point. Since we did not find 
recommendations for starting points in the literature, we ran our two 
matrix-free solvers from 150 starting points chosen uniformly at random: the
final residual for $\|C(x)\|+\|\tilde Wg\|$ was less than $10$ for 23 of the runs, 
while the final feasibility measure $\|C(x)\|$ was less than $10^{-1}$ on 67 of 
the runs. The initial values of $\|C(x)\|+\|\tilde Wg\|$ were typically of the 
order of $[10^4,10^6]$, which illustrates the challenges posed by this 
problem.
Figure~\ref{fig:pdeode} plots the progress along the iterations of both 
algorithms with the lowest final value of $\|C(x)\|+\|\tilde Wg\|$. Note that 
the norm of the constraint quickly drops to tolerance, which indicates that 
our methods produce iterates that end to respect the physics of the problem.

\begin{figure}[!h]
\centering
\includegraphics[scale=0.34]{./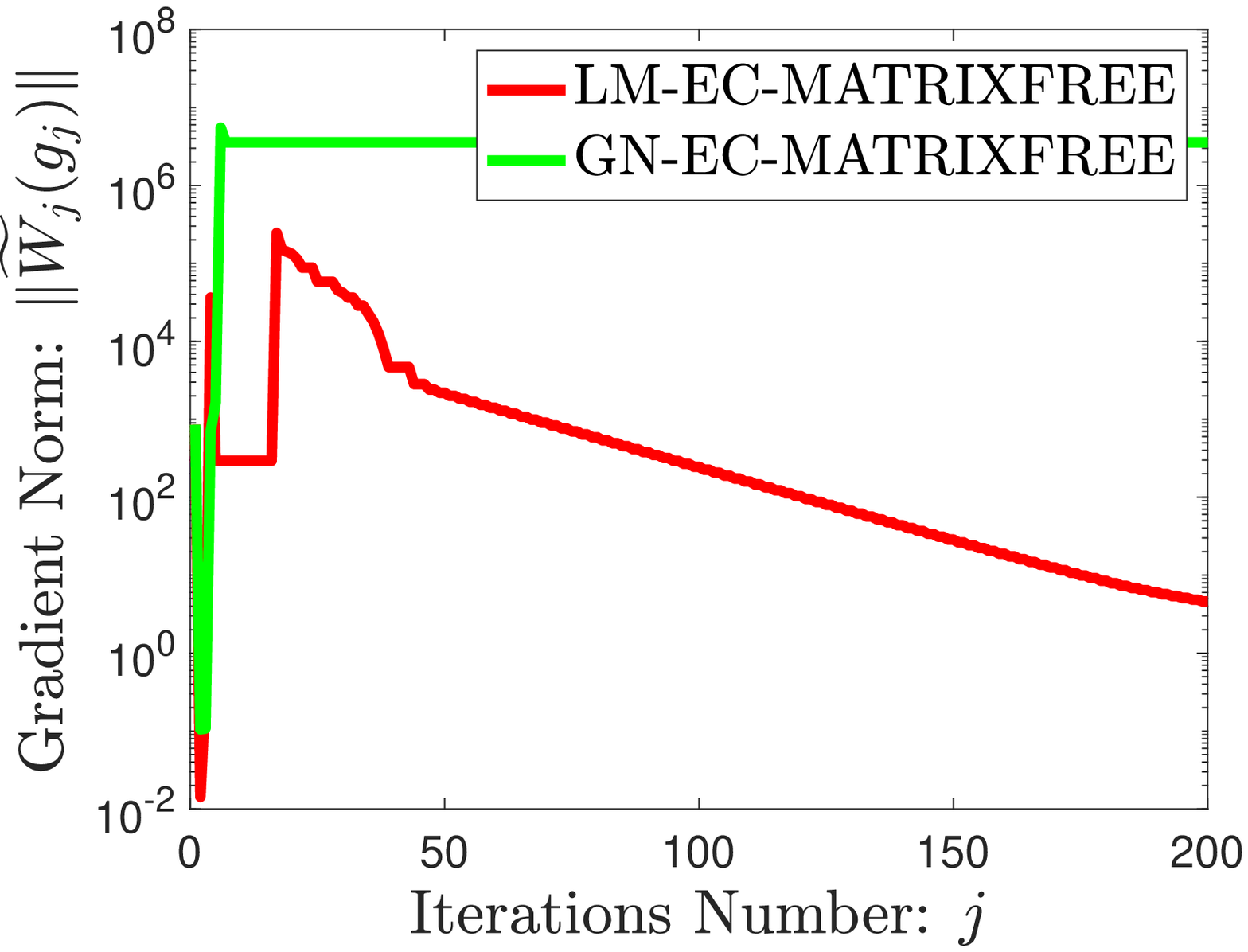}
\includegraphics[scale=0.34]{./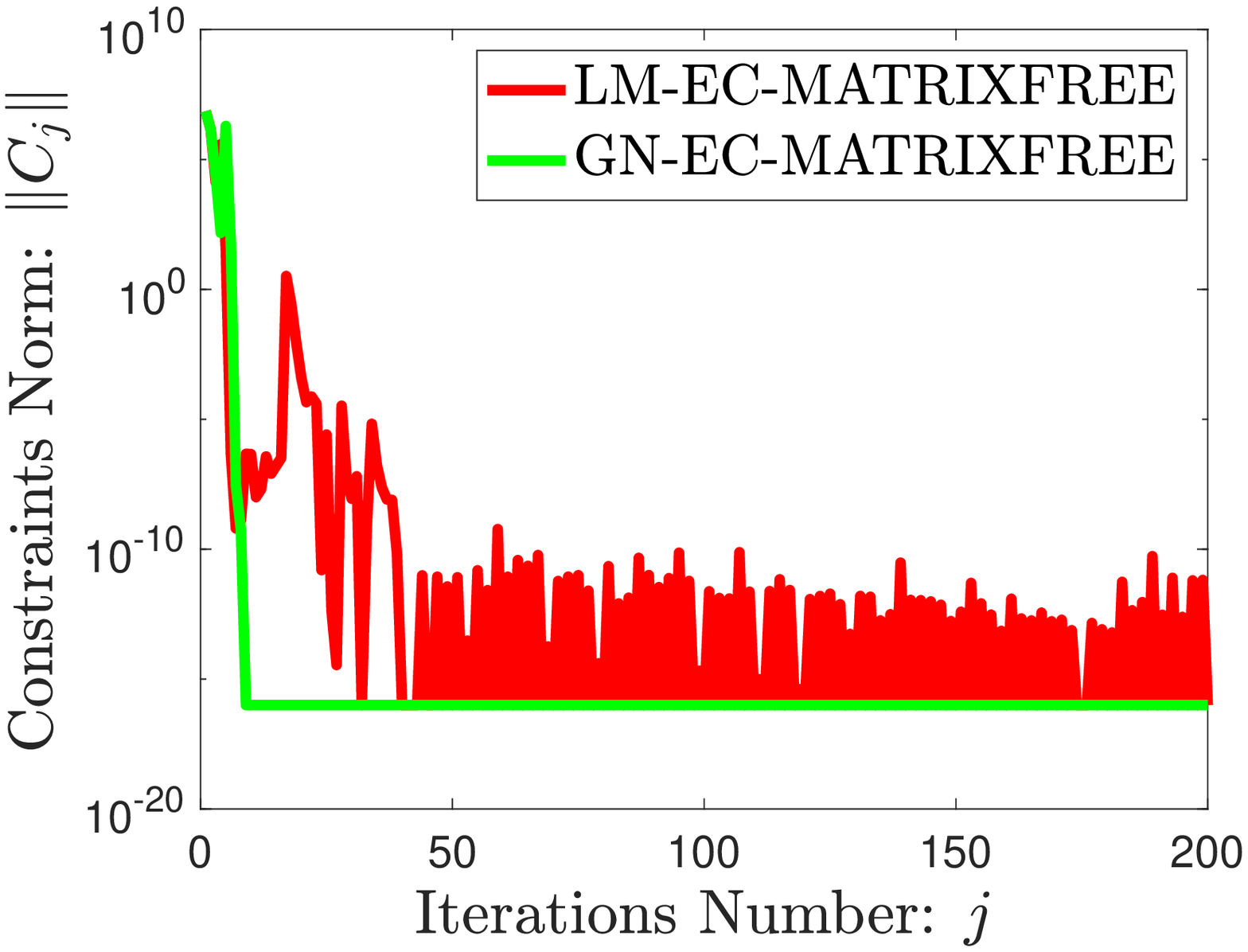}
\includegraphics[scale=0.34]{./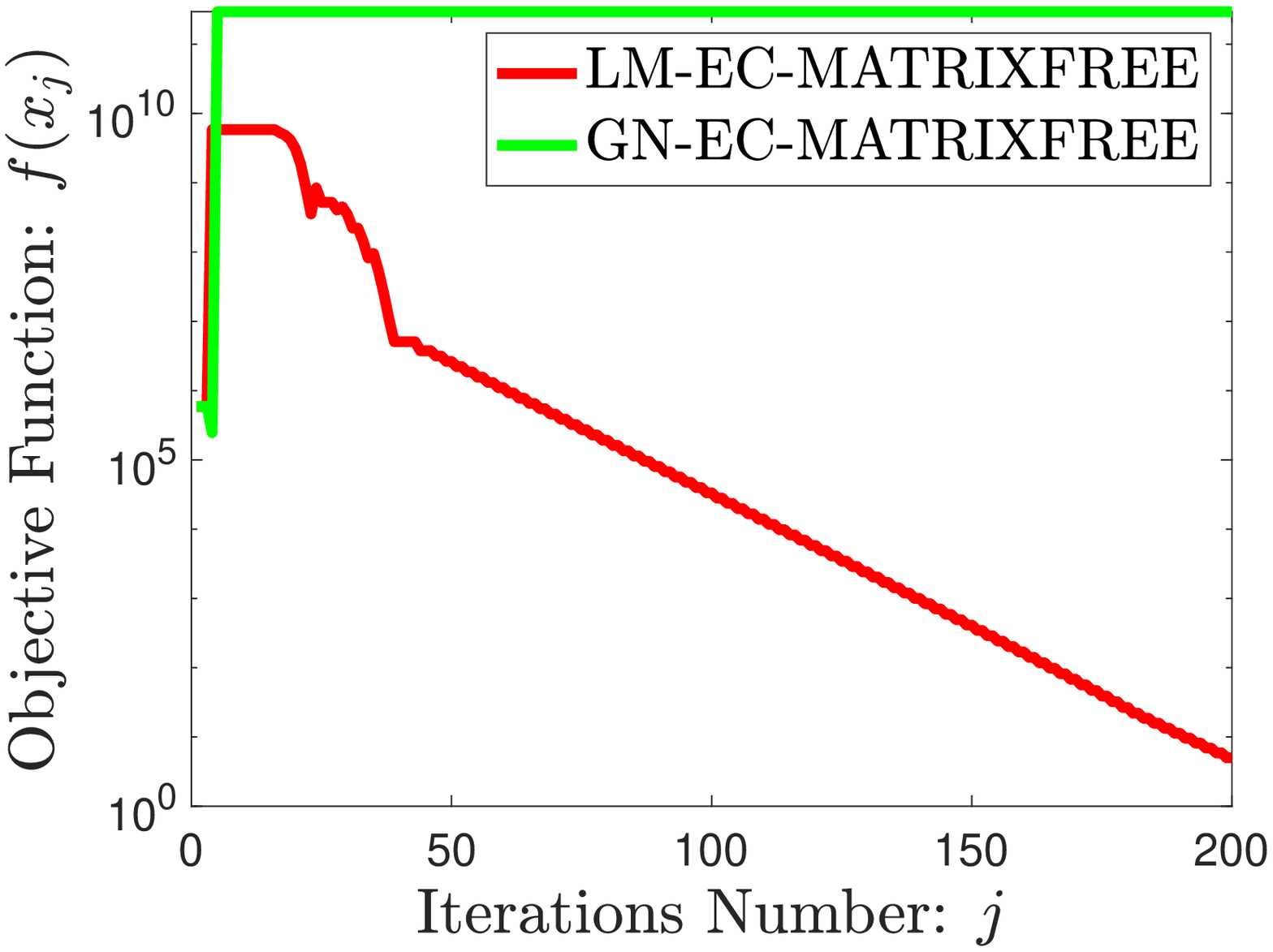}
\includegraphics[scale=0.34]{./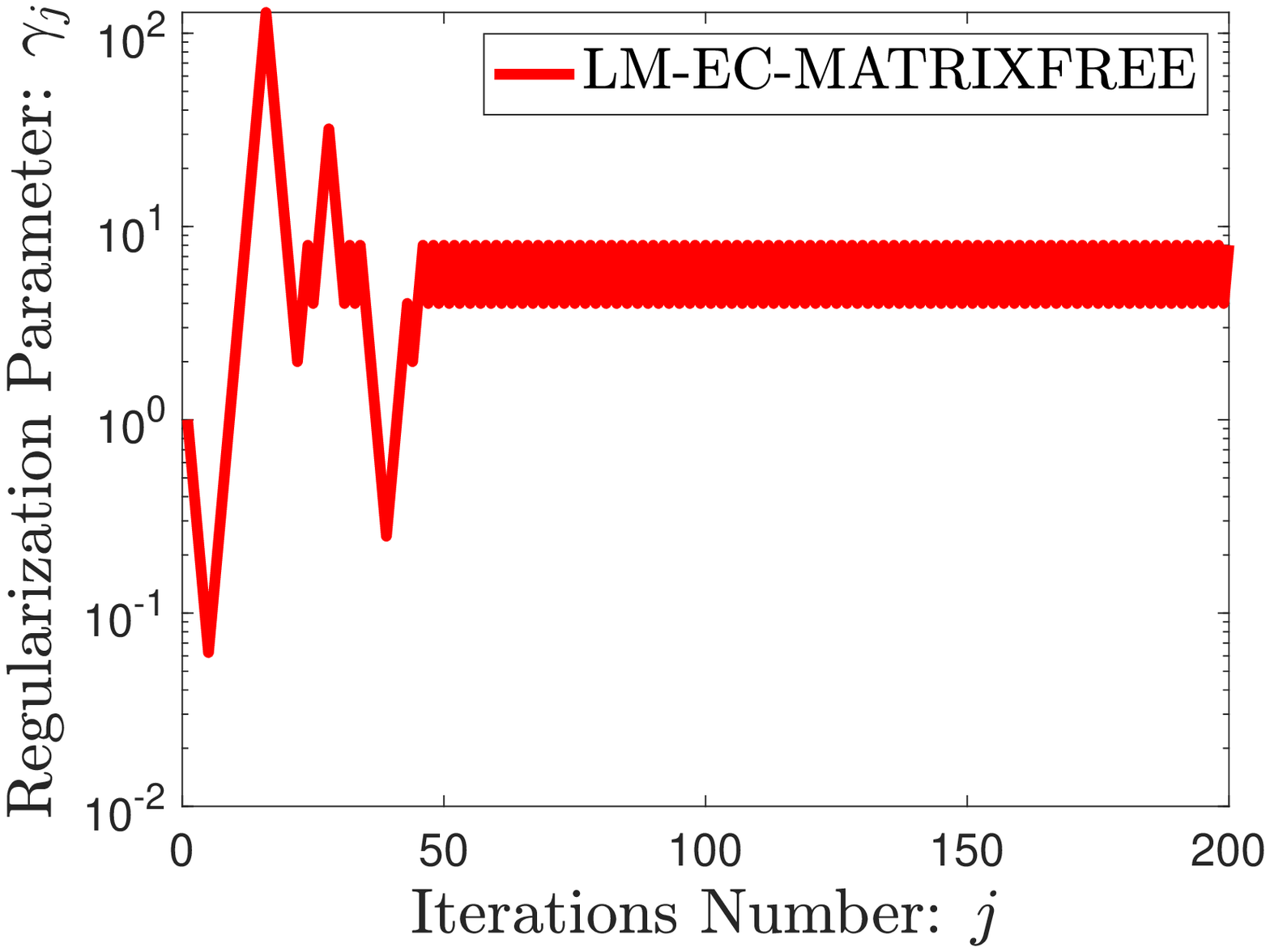}
\caption{Performance for the first $100$ iterations on the inverse coupled PDE-ODE problem.} \label{fig:pdeode}
\end{figure}

\section{Conclusion}
\label{sec:conc}

We have proposed and analyzed a nonmonotone composite-step Levenberg-Marquardt 
algorithm for the solution of equality-constrained least-squares problems. Our 
approach allows for approximate solutions of the subproblems computed by 
iterative linear algebra, and is endowed with global convergence guarantees 
through a nonmonotone step acceptance rule. Our numerical experiments showed 
that our method is competitive with trust-region approaches, and converges on 
a variety of experiments from data assimilation to PDE-constrained optimization. 

Our theoretical analysis focuses on global convergence, yet complexity results 
have become increasingly popular in the optimization community. Deriving 
worst-case bounds on the number of Hessian-vector products required to reach an 
approximate solution of the problem is a potential follow-up of this work.
Besides, our applications of interest such as data assimilation and 
PDE-constrained optimization, the measurements (and sometimes the models 
themselves) can be noisy, which significantly hardens the optimization task. 
Incorporating uncertainty into our framework thus represents an interesting 
avenue for future research.

\section*{Acknowledgements}
The authors would like to thank the guest editors as well as two anonymous referees for their 
insightful comments.
\bibliographystyle{plain}
\addcontentsline{toc}{chapter}{Bibliography}
\bibliography{refs}

\end{document}